\DeclareMathAlphabet{\mathbbb}{U}{bbold}{m}{n}
\newcommand*\bigcdot{\mathpalette\bigcdot@{.5}}
\newcommand*\bigcdot@[2]{\mathbin{\vcenter{\hbox{\scalebox{#2}{$\m@th#1\bullet$}}}}}
\numberwithin{equation}{section}
\theoremstyle{definition}
\newtheorem{defi}{Definition}[section]
\theoremstyle{plain}
\newtheorem{cor}[defi]{Corollary}
\newtheorem{cor-defi}[defi]{Corollary-Definition}
\newtheorem{lemma}[defi]{Lemma}
\newtheorem{prop-defi}[defi]{Proposition-Definition}
\newtheorem{theo}[defi]{Theorem}
\newtheorem{ques}[defi]{Question}
\theoremstyle{remark}
\newtheorem*{rmk}{Remark}
\title{$\delta$-lifting and $1$-dimensional analytic fields}
\author{Jiahong Yu}
\newcommand\dA{\mathbb{A}}
\newcommand\dF{\mathbb{F}}
\newcommand\dQ{\mathbb{Q}}
\newcommand\dR{\mathbb{R}}
\newcommand\dZ{\mathbb{Z}}
\newcommand\calE{\mathcal{E}}
\newcommand\calK{\mathcal{K}}
\newcommand\calO{{\mathcal{O}}}
\newcommand\calV{{\mathcal{V}}}
\newcommand\fraka{\mathfrak{a}}
\newcommand{\frakm}{\mathfrak{m}}
\newcommand{\frako}{\mathfrak{o}}
\newcommand\hol{\text{\normalfont{H}}}
\DeclareMathOperator{\Spf}{Spf}
\newcommand\an{\text{\normalfont{an}}}
\newcommand\id{\text{\normalfont{id}}}
\newcommand\AAinf{\dA_{\inf}}
\newcommand\Ainf{A_{\inf}}
\renewcommand\dif{\text{\normalfont{d}}}
\begin{document}

\maketitle

\begin{abstract}
    Let $k$ be an algebraically closed complete non-Archimedean field, and let $K$ be a finitely generated field extension over $k$ with transcendence degree $1$. Equip $K$ a non-Archimedean norm extending the one on $k$, and let $\calK$ denote the completion of $K$. We will prove that the valuation ring $\calK^+$ admits a flat $\delta$-lifting over $\AAinf(k^+)$ if and only if $\calK$ is not of type 4. 
\end{abstract}

\tableofcontents

\section{Introduction}

Throughout the paper, fix a prime number $p$.

The research of modern algebraic geometry and number theory heavily rely on the understanding of various mathematical objects over complete non-Archimedean fields. Such fields, particularly when their residue characteristic is $p$, form the foundation of rigid analytic geometry and $p$-adic Hodge theory. Investigating the algebraic extensions of these fields and the structure of their valuation rings is key to a deeper understanding of local arithmetic and geometric properties.

\subsection{Motivation from $p$-adic Hodge Theory: Prismatization of Geometric Valuation Rings}

Prismatic theory is a core framework that has emerged in recent years in the fields of $p$-adic geometry and $p$-adic Hodge theory, providing a powerful new tool for unifying the understanding of various $p$-adic cohomology theories.

A key contribution of this theory is the prismatic cohomology introduced by Bhatt--Scholze (cf. \cite{Bhatt_2022}). This theory introduces a novel algebraic structure called a "prism", which enables the description of several important $p$-adic cohomology theories, such as crystalline cohomology and \'etale cohomology, within a unified framework. Prismatic cohomology is regarded as a motivic $p$-adic cohomology theory, from which other known theories can be naturally derived.

Building on this, Bhatt and Lurie further developed the theories of "prismatization" and "absolute prismatic cohomology" (cf. \cite{bhatt2022absolute}, \cite{bhatt2022prismatization}). This work is an important generalization of the relative version of prismatic cohomology, aiming to construct more intrinsic and universal homological invariants for $p$-adic geometric objects. Prismatization can help us gain new insights into the deep arithmetic-geometric structures of $p$-adic varieties, and thus the precise computation of prismatization is a fundamental problem in modern $p$-adic Hodge theory.

Specifically, let $k$ be an algebraically closed complete non-Archimedean field whose residue field is of characteristic $p$, and let $\frako$ be its valuation ring. In \cite{bhatt2022prismatization}, the authors proved that for a $p$-completely smooth algebra $R$ over $\frako$, the Hodge--Tate locus of the prismatization of $\mathrm{Spf}(R)$ (aka. Hodge--Tate stack) is non-canonically isomorphic to the classifying space $BT_{R}^{\sharp}$ of an fpqc-group $T_{R}^{\sharp}$. Furthermore, when we fix a smooth $\delta$-lifting of $R/\frako$ over $\AAinf(\frako)$, we can canonically construct an isomorphism between the Hodge--Tate stack of $\Spf(R)$ and $BT_{R}^{\sharp}$. On the other hand, in \cite{qu2025rationalhodgetateprismaticcrystals}, the authors defined geometric valuation rings over $\frako$, as an analogue of valuation rings for smooth algebras over $\frako$. Therefore, we pose the following question:

\begin{ques}\label{ques: bhatt--lurie of val ring}
	To which geometric valuation rings can the computational method of Bhatt--Lurie be analogized?
\end{ques}

Specifically, let $\mathcal{K}^+$ be a geometric valuation ring over $\frako$. To analogize the Bhatt--Lurie computation, we first need to find a flat $\delta$-lifting of $\mathcal{K}^+$ over $\AAinf(\frako)$. Therefore, as a subquestion, we can pose the following problem:

\begin{ques}\label{ques: delta lift of val ring}
	For which geometric valuation rings $\mathcal{K}^+$ does $\mathcal{K}^+$ admit a flat $\delta$-lifting over $\AAinf(\frako)$?
\end{ques}

\subsection{Main results: Answering Question \ref{ques: delta lift of val ring} for one dimensional analytic field}\label{subsec: main 1}

In this paper, we study the points on a Berkovich curve over an algebraically closed field. More precisely, we work within the following framework.

Let $k$ be an algebraically closed, complete non-Archimedean field whose residue field is of characteristic $p$. Denote by $\frako$ the valuation ring of $k$, and let $A_{\inf}=\AAinf(\frako)$ be the Fontaine's period ring. Fix a generator $\xi$ of the kernel of Fontaine's $\theta$-map.

Let $\mathcal{K}/k$ be an extension of complete non-Archimedean fields, and make the following definition.

\begin{defi}\label{defi:1 dim an field}
    Such a $\calK$ is called a \emph{one-dimensional analytic extension over $k$} if there exists an element $z\in\calK$ such that $\calK$ is finite over the closure of $k(z)$.
\end{defi}
Let $\mathcal{K}^+$ be the valuation ring of $\mathcal{K}$.
The theory of Berkovich spaces classifies $\calK$ into the following types.

\begin{defi}\label{defi: Berkovich classify 1 dim geo rings}
    A one-dimensional analytic field $\calK/k$ is called:
    \begin{enumerate}
        \item \emph{type 2} if the residue field of $\calK^+$ is a proper extension of the residue field of $k$;
        \item \emph{type 3} if the value group of $\calK$ is a proper extension of the value group of $k$
        \item \emph{type 4} if none of the above conditions hold.
    \end{enumerate}
\end{defi}

By the Abhyankar inequality (see, for example, \cite[Appendix 2]{Zariski_1960}), every one-dimensional analytic field $\calK/k$ falls into exactly one of the classes defined above. Moreover, the Abhyankar inequality is an equality if and only if $\calK$ is not of type 4. In this case, we say that $\calK$ is \emph{Abhyankar}.

Now we state the first main theorem.

\begin{theo}[Subsection \ref{pr to main 1}]\label{main theo in intro}
    With the notation as above, the following are equivalent:
    \begin{enumerate}[label=(\roman*)]
        \item The extension $\mathcal{K}/k$ is Abhyankar;
        \item $\calK^+$ admits a $\delta$-lifting $(\widetilde{\calK}^+,\delta)$ over $\Ainf$ such that the structure map $\Ainf\to \widetilde{\calK}^+$ is $(p,\xi)$-completely flat.
    \end{enumerate}
    If in addition $\mathrm{char}(k)=p$, the above conditions are further equivalent to $\calK^+$ being $F$-split.
\end{theo}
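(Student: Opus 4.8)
\emph{Strategy.} The plan is to prove (i)$\Leftrightarrow$(ii) in all characteristics — constructing a flat $\delta$-lift explicitly when $\calK$ is Abhyankar, and producing an obstruction when $\calK$ is of type $4$ — and then to treat the $F$-split clause separately when $\mathrm{char}(k)=p$. I expect (ii)$\Rightarrow$(i) to be the main obstacle. First a normalisation: since $k$ is algebraically closed, $\calK/k$ is finitely generated of transcendence degree $1$ and hence separably generated, so one may fix $z\in\calK^+$ that is simultaneously a separating transcendence element (so $\calK/L$ is finite separable, where $L:=\widehat{k(z)}$ carries the restricted valuation) and adapted to the Berkovich type in the sense of Definition~\ref{defi: Berkovich classify 1 dim geo rings}: in the type~$2$ case the residue of $z$ is transcendental over the residue field of $k$, so $L$ carries the Gauss valuation of $k(z)$; in the type~$3$ case $v(z)=r\notin\Gamma_k$, so $L$ carries the corresponding irrational monomial valuation. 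Recall also that on a $(p,\xi)$-complete, $(p,\xi)$-torsion free $\Ainf$-algebra a $\delta$-structure over $(\Ainf,\delta_{\mathrm{can}})$ is the same datum as a lift of the absolute Frobenius, and that a flat $\delta$-lift of $\calK^+$ is such an algebra $\widetilde R$ which is $(p,\xi)$-completely flat, equipped with an identification $\widetilde R/\xi\cong\calK^+$. The problem then decomposes into (a) lifting $L^+$, and (b) extending the lift along the finite separable extension $L^+\subseteq\calK^+$.

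\emph{Proof of (i)$\Rightarrow$(ii).} For (a) one writes $L^+$ explicitly and lifts it coefficientwise. In the type~$2$ case $L^+$ is the $p$-adic completion of the localisation of the Tate algebra $\frako\langle z\rangle$ at the prime $\mathfrak p=\ker(\frako\langle z\rangle\to(\frako/\frakm)[z])$; one takes $\widetilde{L^+}$ to be the $(p,\xi)$-completion of $(\Ainf\langle z\rangle)_{\mathfrak P}$ for $\mathfrak P$ the preimage of $\mathfrak p$, which is $\phi$-stable (being the kernel of a map onto a reduced $\dF_p$-algebra on which Frobenius is injective), $(p,\xi)$-completely flat since $\Ainf\langle z\rangle$ is, and reduces to $L^+$ modulo $\xi$. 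In the type~$3$ case $L^+$ is the $p$-adic completion of the monoid algebra $\frako[M]$ for $M=\{(i,\gamma)\in\dZ\times\Gamma_k:\gamma+ir\ge 0\}$, realised via $z$ and Teichmüller lifts of pseudo-uniformisers; one takes $\widetilde{L^+}=\widehat{\Ainf[M]}$ with $\phi$ acting by $z\mapsto z^p$ and the Witt Frobenius on coefficients, which carries $M$ into $pM\subseteq M$, so $\phi$ is a well-defined Frobenius lift, $\Ainf[M]$ is free over $\Ainf$, and the reduction modulo $\xi$ is $L^+$. For (b) one extends $\widetilde{L^+}$ to $\calK^+$ by taking $\widetilde{\calK^+}$ to be the $(p,\xi)$-completed integral closure of $\widetilde{L^+}$ in the finite étale $\widetilde{L^+}[1/(p\xi)]$-algebra attached to $\calK/L$; since Frobenius lifts along étale maps are unique, $\phi$ extends uniquely to the integral closure, and the reduction modulo $\xi$ recovers $\calK^+$ because $\calK/L$ is separable. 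The one technical point is that $\widetilde{\calK^+}$ remains $(p,\xi)$-completely flat over $\Ainf$; one verifies this using normality of $\widetilde{L^+}$ and the fact that the extension is étale away from the special fibre.

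\emph{Proof of (ii)$\Rightarrow$(i), the main obstacle.} It suffices to show that a type~$4$ field $\calK$ admits no flat $\delta$-lift. Given such a lift $\widetilde R$, the pair $(\widetilde R,(\xi))$ is a prism over $(\Ainf,(\xi))$, and passing to its perfection produces a perfect prism, hence a perfectoid $\frako$-algebra $T$ together with a map $\calK^+\cong\widetilde R/\xi\to T$. Analysing this map, one sees that $\calK^+$ is, up to the perfection operation, embedded in (a localisation of) a perfectoid valuation ring $\calK^{(\infty),+}$ whose value group is divisible and whose residue field is perfect; but for a type~$4$ field $\Gamma_\calK=\Gamma_k$ is already divisible and the residue field already coincides with that of $k$, so $\calK^{(\infty)}$ has the \emph{same} value group and residue field as $k$ while the nested-disk structure defining the type~$4$ point persists under perfection, keeping $z$ transcendental over $k$ — and the Abhyankar (in)equality applied to $\calK^{(\infty)}$ then contradicts $\mathrm{trdeg}_k\calK=1$. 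Making precise the comparison between $\calK^+$ and $\calK^{(\infty),+}$ — in particular controlling the failure of $\phi$-flatness of an arbitrary $\delta$-lift so that $\calK^+\to T$ is close enough to an embedding — is the crux of the whole argument, and it is here that the transcendence degree $1$ hypothesis and the precise structure of type~$4$ points are used.

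\emph{The $F$-split clause when $\mathrm{char}(k)=p$.} Here $\frako$ is perfectoid, $\Ainf=W(\frako)$ and $\xi=p$, so a flat $\delta$-lift of $\calK^+$ is exactly a $p$-completely flat $W(\frako)$-algebra $\widetilde R$ with $\widetilde R/p\cong\calK^+$ carrying a Frobenius lift. Since (i)$\Leftrightarrow$(ii) is already established, it remains to show that $\calK^+$ is Abhyankar if and only if it is $F$-split. For ``Abhyankar $\Rightarrow$ $F$-split'' one uses the explicit descriptions above: a Gauss-type or irrational-monomial valuation ring is $F$-split because its Frobenius pushforward decomposes over it along fractional exponents, and this splitting propagates along the finite separable extension $L^+\subseteq\calK^+$ (and is unaffected by $p$-adic completion since $\calK^+$ is already complete). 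For ``$F$-split $\Rightarrow$ Abhyankar'' one may either run the characteristic-$p$ instance of the obstruction argument above — in which case $T$ is literally a perfect ring of which $\calK^+$ is a pure subring, so the comparison is unconditional — or invoke the results of Datta and Smith on Frobenius splittings of valuation rings, to the effect that an $F$-split valuation ring of an $F$-finite function field is Abhyankar. Combining this equivalence with (i)$\Leftrightarrow$(ii) yields the last assertion.
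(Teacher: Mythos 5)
Your (i)$\Rightarrow$(ii) follows essentially the paper's route (Theorem~\ref{theo: type 23 has delta lifting}): normalise to the completion of $k(t)$ via Temkin's uniformization, lift the integral Tate algebra or the rank-one monomial/colimit model over $\Ainf$, equip it with the obvious Frobenius lift, and propagate the lift along the finite \'etale extension by deformation theory. Your realisation of the type-3 ring via a monoid algebra is cosmetically different from the paper's colimit of localisations $(R_i)_{\mathfrak p_i}$, but the idea is the same.

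The gap is in (ii)$\Rightarrow$(i), and you flag it yourself. Passing to the perfection of the prism $(\widetilde R,(\xi))$ does give a perfectoid ring $T$ and a map $\calK^+\to T$, but nothing in your argument controls the kernel of this map: the perfection of a prism is a colimit along $\phi$ followed by completion, and neither step need be injective for an arbitrary flat $\delta$-lift. You concede that ``controlling the failure of $\phi$-flatness \ldots\ so that $\calK^+\to T$ is close enough to an embedding \ldots\ is the crux,'' but offer no mechanism; as stated the proposal stops short of a proof exactly at the point where the content lies. The paper avoids this entirely by going the other way around the prism: it reduces the $\delta$-lift modulo $p$ (not $\xi$) to get a characteristic-$p$ valuation ring $\calK^{+\flat}=\widetilde{\calK}^+/p$ with $\calK^{+\flat}/p^\flat\cong\calK^+/p$ (Theorem~\ref{theo: main deform}), then uses the $\delta$-structure to see that $\calK^{+\flat}$ is $F$-liftable, applies the Cartier-operator argument of Theorem~\ref{theo: f lift to f split} (which requires the finite-dimensionality of $\widehat\Omega_{\calK^{+\flat}}[\frac{1}{p^\flat}]$, supplied by Theorem~\ref{theo: equi cond of finite dim} and Lemma~\ref{lemma: dim of dif type 4}) to deduce that $\calK^{+\flat}$ is $F$-split, and finally invokes Theorem~\ref{theo: non f split} to force $\calK^{+\flat}$ to be perfect, hence $\calK$ to be perfectoid --- contradicting $\widehat\Omega_{\calK^+/\frako}\neq 0$ from Lemma~\ref{lemma: dim of dif type 4}. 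No injectivity of any perfection map is needed.

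The $F$-split clause has the same problem in miniature. Your ``Abhyankar $\Rightarrow$ $F$-split'' via ``decomposes along fractional exponents'' is an assertion, not a proof; the paper gets this for free from the $\delta$-lift by Theorem~\ref{theo: f lift to f split}, which genuinely uses the Cartier isomorphism of Gabber (Theorem~\ref{theo:flat cotangent cpx of val ring in char p and cartier smooth}) and the rank-one endomorphism lemma (Lemma~\ref{lemma: end of 1 rank almost free mod}). Your ``$F$-split $\Rightarrow$ Abhyankar'' either defers to Datta--Smith or to the incomplete perfection argument; the paper gives a self-contained proof (Theorem~\ref{theo: non f split}, a $p$-th-power approximation argument with the norm) that a non-perfect valuation ring with $p$-divisible value group and perfect residue field cannot be $F$-split, and couples it with Lemma~\ref{lemma: dim of dif type 4} to see that a type-4 $\calK^+$ is indeed non-perfect. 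You would need to substitute an argument of comparable precision to close the gap.
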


Furthermore, we find that the type 2 points can be characterized by the $F$-finiteness of $\calK^+/p$.

\begin{theo}[Subsection \ref{pf to main 2}]\label{theo: main 2 f finite}
    With the notation as above, the following are equivalent:
    \begin{enumerate}[label=(\roman*)]
        \item The extension $\mathcal{K}/k$ is of type 2;
        \item The Frobenius map $\mathcal{K}^+/p\to \calK^+/p$ is finite.
    \end{enumerate}
\end{theo}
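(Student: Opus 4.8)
The plan is to reformulate condition (ii) as: the pushforward $F_*(\calK^+/p)$ of the ring $\calK^+/p=\calK^+/p\calK^+$ (of characteristic $p$, with Frobenius $F$) along its own Frobenius is a finitely generated $\calK^+/p$-module, and to show this happens exactly for type $2$. Working with $\calK^+/p$ throughout avoids any separate treatment of $\mathrm{char}(k)=p$ (where $\calK^+/p=\calK^+$). The numerical input is that, $\calK$ being finite over some $\widehat{k(z)}$, one has $[\calK:\calK^p]=p$ (propagate $[\widehat{k(z)}:\widehat{k(z)}^p]=p$ through the finite extension using $[\calK:\calK^p]=[\calK:\widehat{k(z)}]\cdot[\widehat{k(z)}:\widehat{k(z)}^p]/[\calK^p:\widehat{k(z)}^p]$), and that for the purely inseparable extension $\calK/\calK^p$ of valued fields the fundamental identity reads $e\cdot f\cdot d=p$, where $e=[\Gamma_\calK:p\Gamma_\calK]$, $f=[\kappa_\calK:\kappa_\calK^p]$ ($\Gamma_\calK,\kappa_\calK$ the value group and residue field of $\calK^+$), and $d$ is the defect. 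Since $\Gamma_k$ is divisible and $\kappa_k$ is perfect, the Abhyankar trichotomy (Definition \ref{defi: Berkovich classify 1 dim geo rings}) gives $(e,f,d)=(1,p,1)$ for type $2$, $(p,1,1)$ for type $3$, and $(1,1,p)$ for type $4$ (type $4$ being exactly $\Gamma_\calK=\Gamma_k$ and $\kappa_\calK=\kappa_k$).

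For (i)$\Rightarrow$(ii): here $\Gamma_\calK=\Gamma_k$ and $\kappa_\calK$ is a function field of one variable over the perfect field $\kappa_k$; choose $z$ so that $\bar z\in\kappa_\calK$ is a separating transcendence element, so that $\calK/\widehat{k(z)}$ is separable of finite degree with $e=1$, $f=[\kappa_\calK:\kappa_k(\bar z)]<\infty$ and trivial defect. Because $e=1$, lifting a $\kappa_k(\bar z)$-basis of $\kappa_\calK$ produces elements whose residues remain linearly independent, so no cancellation of valuations occurs and $\calK^+$ is a finite $\widehat{k(z)}^+$-module; hence $\calK^+/p$ is finite over $\widehat{k(z)}^+/p$, and as $F$-finiteness is inherited along finite ring extensions it suffices to treat $\calK=\widehat{k(z)}$. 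There $\widehat{k(z)}^+$ is the $p$-adic completion of the valuation ring $\frako[z]_{\frakm[z]}$ (with $\frakm[z]=\frakm\cdot\frako[z]$), so $\widehat{k(z)}^+/p=(\frako/p)[z]_{\frakm[z]/p}$. Now $(\frako/p)[z]$ is $F$-finite: $k$ algebraically closed forces Frobenius on $\frako/p$ to be surjective, whence $\big((\frako/p)[z]\big)^{[p]}=(\frako/p)[z^p]$ and $(\frako/p)[z]=\bigoplus_{i=0}^{p-1}(\frako/p)[z^p]\,z^i$ is free of rank $p$ over it; localization preserves $F$-finiteness, giving (ii).

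For $\neg$(i)$\Rightarrow\neg$(ii): decompose $F_*(\calK^+/p)=\bigoplus_{c\in\Gamma_\calK/p\Gamma_\calK}M_c$ with $M_c=\{x\mid v(x)\in c\}\cup\{0\}$; this is a decomposition of $\calK^+/p$-modules because the Frobenius-twisted action shifts valuations by elements of $p\Gamma_\calK$ and hence preserves each $M_c$, and there is no cancellation across classes. In the type $3$ case $\Gamma_\calK=\Gamma_k\oplus\dZ\delta$ with $\delta\notin p\Gamma_\calK$, and for $c\ne 0$ the summand $M_c$, viewed over the Frobenius image $(\calK^+/p)^{[p]}$ (whose value set lies in $p\Gamma_\calK\supseteq\Gamma_k$), is a fractional ideal with "infimum" a cut of the dense group $p\Gamma_\calK$ not attained in it; such an ideal is not finitely generated, and since a direct summand of a finitely generated module is again finitely generated, $F_*(\calK^+/p)$ cannot be finitely generated. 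In the type $4$ case $\Gamma_\calK=\Gamma_k$ is $p$-divisible, so only $M_0=\calK^+/p$ remains; if $F_*(\calK^+/p)$ were finitely generated then, being a module over the valuation-ring quotient $\calK^+/p$, it is a direct sum of cyclics, and tensoring with $\kappa_\calK$ yields $F_*\kappa_\calK=\kappa_\calK$ (one-dimensional, as $\kappa_\calK=\kappa_k$ is perfect), so $F_*(\calK^+/p)$ is cyclic with a unit generator, forcing $F$ to be surjective on $\calK^+/p$. Then $\calK^+$ — a rank-one valuation ring that is $p$-torsion-free, $p$-adically complete, and has $\calK^+/p$ semiperfect — is integral perfectoid, so $\calK^+$ admits the $(p,\xi)$-completely flat $\delta$-lifting $W(\calK^{+\flat})$ over $\Ainf$, contradicting Theorem \ref{main theo in intro} since $\calK/k$ is not Abhyankar. (In equal characteristic one may replace this last step by the elementary observation that a finite torsion-free module over a valuation ring is free, so cyclicity of $F_*\calK^+$ together with generic rank $[\calK:\calK^p]=p$ and residual rank $[\kappa_\calK:\kappa_\calK^p]=1$ is absurd.)

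The delicate points I expect to work hardest on are: (a) the claim, used for type $4$, that a $p$-torsion-free, $p$-complete rank-one valuation ring with semiperfect reduction mod $p$ is integral perfectoid — so that the untilt $W(\calK^{+\flat})\twoheadrightarrow\calK^+$ has principal kernel — this is where being a valuation ring (rather than an arbitrary ring) is essential; (b) the reduction in (i)$\Rightarrow$(ii), which needs $\calK/\widehat{k(z)}$ to be defectless with $e=1$ (note that defectlessness alone does \emph{not} force the valuation ring to be a finite module once $e>1$ and the value group is dense — precisely the phenomenon exploited in the type $3$ argument — so the hypothesis $e=1$ must be used carefully); and (c) verifying the asserted $\calK^+/p$-module direct-sum decomposition of $F_*(\calK^+/p)$, and, in the type $3$ case, that the offending summand really fails to be finitely generated, i.e.\ that it contains elements of arbitrarily small nonnegative valuation in the class $c\ne 0$ — which relies on divisibility, hence density, of $\Gamma_k$ inside $p\Gamma_\calK$. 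I anticipate (a) to be the principal obstacle, unless it can be circumvented by invoking Theorem \ref{main theo in intro} in a more directly applicable form.
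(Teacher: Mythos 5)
Your overall strategy is genuinely different from the paper's: the paper routes both implications through the complete differential module $\widehat\Omega_{\calK^+/\frako}$ (Lemma \ref{lemma: f finite to dif free} shows $F$-finiteness forces $\widehat\Omega_{\calK^+/\frako}$ to be free of finite rank, Lemma \ref{lemma: dif mod of type 34 not free} shows $\frakm\widehat\Omega_{\calK^+/\frako}=\widehat\Omega_{\calK^+/\frako}$ for types $3$ and $4$), whereas you attempt a direct valuation-theoretic argument. Your $(i)\Rightarrow(ii)$ reduction and the explicit computation for $\widehat{k(z)}^+/p$ are essentially the same as the paper's, modulo that the paper invokes Temkin's Theorem \ref{theo:uniformization} to get the finite \'etale reduction outright, which is cleaner than your $e=1$, defectless argument.

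There is a genuine gap in the type $3$ case. The claimed decomposition $F_*(\calK^+/p)=\bigoplus_{c\in\Gamma_\calK/p\Gamma_\calK}M_c$ with $M_c=\{x : v(x)\in c\}\cup\{0\}$ is false: $M_c$ is not closed under addition. If $x,x'\in M_c$ have $v(x)=v(x')$, then $v(x+x')$ can be strictly larger and land in an arbitrary coset — e.g.\ $1$ and $1+t$ both lie in $M_0$, but their difference $t$ has $v(t)=\delta\in c\neq 0$. So the $M_c$ are not $\calK^+/p$-submodules, and no "offending summand" exists to be non-finitely-generated. The heuristic about small valuations is pointing at a real phenomenon, but the formal vehicle does not work.

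The fix is available within your own framework, and in fact your type $4$ argument already proves it. The Nakayama step — $F_*(\calK^+/p)\otimes_{\calK^+/p}\kappa_\calK \cong F_*\kappa_\calK$ is one-dimensional because $\kappa_\calK=\kappa_k$ is perfect — applies verbatim in type $3$ (where the residue field also coincides with $\kappa_k$). Hence if $F_*(\calK^+/p)$ is finitely generated it is cyclic on a unit generator, and Frobenius is surjective on $\calK^+/p$. But then for every $x\in\calK^+$ with $0<v(x)<v(p)$ one has $x\equiv a^p\pmod p$ and comparing valuations forces $v(x)=pv(a)\in p\Gamma_\calK$; since in type $3$ one can find such $x$ with $v(x)=\delta\notin p\Gamma_\calK$ (using the density of $\Gamma_k$ and $\delta\notin\Gamma_k$), this is already a contradiction without any appeal to perfectoid theory or to Theorem \ref{main theo in intro}. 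With this replacement your proof closes, and the remaining delicate point you flagged — that a $p$-complete, $p$-torsion-free rank-one valuation ring with semiperfect reduction is integral perfectoid — is needed only for type $4$. Even there you could alternatively cite Lemma \ref{lemma: dim of dif type 4} directly (a nonzero $\widehat\Omega_{\calK^+/\frako}[\tfrac1\varpi]$ rules out $\calK$ perfectoid), which is closer to the spirit of the paper and avoids the untilting machinery.
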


In the paper \cite{Datta_2016}, the authors studied the $F$-splitness and the $F$-finiteness of valuation rings of an algebraic function field. They claimed that, under their assumptions, the Abhyankar property implies $F$-finiteness (cf. \cite[Theorem 5.1]{Datta_2016}). However, they subsequently identified an error in this proof (cf. \cite{Datta_2017}). In \cite{Datta_2021}, it was proved that under the hypotheses of \cite[Theorem 5.1]{Datta_2016}, the Abhyankar property implies $F$-splitness. In fact, the proof of Theorem \ref{theo: main 2 f finite} provides an explicit counterexample to \cite[Theorem 5.1]{Datta_2016}.

\subsection{Acknowledgement}

I would like to express my sincere gratitude to Xiaoyu Qu and Shiji Lyu for their invaluable insights and stimulating discussions, which have greatly contributed to the development of this work. I am also deeply thankful to Tian Qiu for his careful reading of the draft and his constructive suggestions.

\newpage
\section*{Notations}
We introduce some notations we will frequently use in the paper.

\begin{enumerate}[label=(\arabic*)]
    \item  For any ring $R$ of characteristic $p$, denote by $F_R$ the Frobenius of $R$. In addition, for any $R$-module $M$, denote by $F_*M$ the Frobenius push-forward of $M$. That is, $F_*M$ is the $R$-module with underlying abelian group $M$ but the scalar product is defined as
\[a\cdot m=a^pm\ \forall a\in R,m\in M.\]

\item By a non-Archimedean field we mean a field $K$ with a non-Archimedean norm $|\bigcdot|:K\to \dR_{\geq 0}$. For any non-Archimedean field, we will use $K^+$ to denote the ring of integers of $K$. Also, for any $r>0$ and $x\in K$, denote by $B_K(x,r)$ the closed ball $\{y\in K:|y-x|\le r\}$.
\item For any ring $R$ of characteristic $p$, denote by $W_n(R)$ the $n$-truncated Witt ring of $R$. Denote by $W(R)$ the Witt ring of $R$.
\item  Let $A$ be a ring and $\fraka\subseteq A$ be an ideal. Denote by $\sqrt{\fraka}$ the nilradical of $\fraka$.
\end{enumerate}
\newpage
\section{Preliminaries}

This section reviews the preliminary material necessary for the proofs of Theorems \ref{main theo in intro} and \ref{theo: main 2 f finite}.

\subsection{General theory of valuation rings}

\noindent\textbf{Commutative algebra of valuation rings:} We begin by recalling some fundamental facts about valuation rings. Throughout this paper, we focus exclusively on the following class.

\begin{defi}
    A \emph{Tate valuation ring} is a valuation ring $\calO$ for which there exists a nonzero element $\pi\in \calO$ (called a \emph{pseudo-uniformizer}) satisfying that $\calO$ is $\pi$-adically complete.
    
    An \emph{extension of Tate valuation ring} is an inclusion of valuation rings $\calO\subset \calO'$ such that there exists a $\pi\in\calO$ that serves as a pseudo-uniformizer for both $\calO$ and $\calO'$.
\end{defi}

From now on, we fix a Tate valuation ring $\calO$ and a pseudo-uniformizer $\pi$. Moreover, we topologize $\calK=\calO[\frac{1}{\pi}]$ so that $\calO$ is open and is endowed with the $\pi$-adic topology.

The following lemma is fundamental.

\begin{lemma}\label{lemma: val rings radical}
    Let $x\in \calO$ be a nonzero element. There exists an integer $n\ge 1$ such that
    \[\pi^n\in x\calO.\]
    In other words, $\pi$ lies in the radical of any nonzero ideal.
\end{lemma}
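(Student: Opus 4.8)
The plan is to combine two elementary facts: that a valuation ring is a domain whose divisibility relation is total, and that a $\pi$-adically complete ring is $\pi$-adically separated, i.e.\ $\bigcap_{n\ge 1}\pi^n\calO=\{0\}$.

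First I would fix a nonzero $x\in\calO$ and compare it with the powers $\pi^n$ under divisibility. Since $\calO$ is a valuation ring, for each $n\ge 1$ either $x\mid\pi^n$ or $\pi^n\mid x$. If the first alternative holds for some $n$, then $\pi^n\in x\calO$ and we are done; so the only scenario to exclude is that $\pi^n\mid x$ holds for \emph{every} $n\ge 1$ simultaneously. In that scenario $x$ lies in $\bigcap_{n\ge 1}\pi^n\calO$, which is zero by the separatedness built into the definition of a Tate valuation ring, contradicting $x\neq 0$. This proves the first assertion.

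The final clause then follows immediately: given a nonzero ideal $I\subseteq\calO$, choose $0\neq x\in I$, apply the first part to get $n$ with $\pi^n\in x\calO\subseteq I$, and conclude $\pi\in\sqrt{I}$.

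I do not anticipate a genuine obstacle here; the only point requiring care is that "$\pi$-adically complete" in the definition of a Tate valuation ring is meant in the separated-and-complete sense, so that the intersection $\bigcap_{n\ge 1}\pi^n\calO$ really does vanish. If one preferred not to invoke separatedness directly, one could argue through the value group instead: $x\nmid\pi^n$ for all $n$ says $v(x)>n\,v(\pi)$ for all $n$, so $v(x)$ dominates the set $\{n\,v(\pi)\}_{n\ge 1}$, and completeness with respect to $\pi$ rules this out — but the separatedness formulation is the cleanest and is what I would write up.
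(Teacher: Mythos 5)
Your proof is correct and uses the same two ingredients as the paper's: $\pi$-adic separatedness ($\bigcap_{n\ge 1}\pi^n\calO=0$, a consequence of $\pi$-adic completeness) together with the total divisibility in a valuation ring. The paper argues directly (pick $n$ with $x\notin\pi^n\calO$, then conclude $\pi^n\in x\calO$), whereas you phrase it as a short contradiction, but the content is identical.
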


\begin{proof}
    Since $\calO$ is $\pi$-adically complete, $\bigcap_{n\geq 1}\pi^n\calO=0$. In particular, there exists an integer $n\geq 1$ such that $x\notin \pi^n\calO$. Since $\calO$ is a valuation ring, for any two elements, one divides the other. Thus, $\pi^n\in x\calO$.
\end{proof}

This lemma immediately yields a criterion for recognizing pseudo-uniformizers.

\begin{cor}\label{cor: set of psu}
    A nonzero element $\pi'\in\calO$ is a pseudo-uniformizer if and only if there exists $n\geq 1$ such that
    \[\pi'^n\in\pi\calO.\]
\end{cor}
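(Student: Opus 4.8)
The plan is to prove the two implications separately, in both cases reducing to Lemma \ref{lemma: val rings radical} together with the elementary fact that $\pi$-adic completeness of $\calO$ depends only on the topology induced by the adic filtration, not on the chosen pseudo-uniformizer.

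For the forward direction, suppose $\pi'$ is a pseudo-uniformizer. Then $(\calO,\pi')$ is itself a Tate valuation ring, so Lemma \ref{lemma: val rings radical} applies with $\pi$ replaced by $\pi'$: it says that $\pi'$ lies in the radical of every nonzero ideal of $\calO$. Applying this to the nonzero ideal $\pi\calO$ produces an $n\ge 1$ with $\pi'^n\in\pi\calO$, as desired. For the reverse direction, suppose conversely that $\pi'^n\in\pi\calO$ for some $n\ge 1$; in particular $\pi'\neq 0$, so I must only check that $\calO$ is $\pi'$-adically complete. Here I would apply Lemma \ref{lemma: val rings radical} once more, this time to the nonzero ideal $\pi'\calO$, to obtain an $m\ge 1$ with $\pi^m\in\pi'\calO$. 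The two containments give $\pi'^{nk}\calO\subseteq\pi^k\calO$ and $\pi^{mk}\calO\subseteq\pi'^k\calO$ for all $k\ge 1$, so the filtrations $\{\pi^k\calO\}_{k\ge 1}$ and $\{\pi'^k\calO\}_{k\ge 1}$ are mutually cofinal and hence define the same topology on $\calO$. Since $\calO$ is $\pi$-adically complete by hypothesis, it is therefore $\pi'$-adically complete, i.e. $\pi'$ is a pseudo-uniformizer, completing the equivalence in Corollary \ref{cor: set of psu}.

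The only step requiring any care — and the closest thing to an obstacle — is the transfer of completeness from the $\pi$-adic filtration to the $\pi'$-adic one. This is the standard observation that pro-isomorphic inverse systems have canonically isomorphic limits, applied to $\{\calO/\pi^k\calO\}_k$ and $\{\calO/\pi'^k\calO\}_k$: the mutual cofinality of the filtrations yields compatible maps exhibiting $\varprojlim_k\calO/\pi^k\calO\cong\varprojlim_k\calO/\pi'^k\calO$, and the left-hand side is $\calO$ by assumption. Everything else is a direct invocation of Lemma \ref{lemma: val rings radical}, so no further machinery is needed.
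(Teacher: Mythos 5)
Your argument is correct and is essentially the same as the paper's: the forward direction applies Lemma \ref{lemma: val rings radical} with the roles of $\pi$ and $\pi'$ swapped, and the reverse direction uses the same lemma to deduce mutual cofinality of the two adic filtrations and hence transfer completeness. You have simply spelled out the cofinality and pro-isomorphism details that the paper leaves implicit.
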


\begin{proof}
    If $\pi'$ is a pseudo-uniformizer, then applying Lemma \ref{lemma: val rings radical} by taking $x=\pi$ an $\pi=\pi'$ shows 
    \[\pi'^n\in\pi\calO\]
    for some $n\geq 1$.

    Conversely, if $\pi'^n\in\pi\calO$ for some $n\geq 1$, by Lemma \ref{lemma: val rings radical}, the $\pi$-adic topology on $\calO$ coincides with the $\pi'$-adic topology on $\calO$. Thus, $\pi'$ is a pseudo-uniformizer by definition.
\end{proof}

In addition, we establish basic topological properties of the fractional field.

\begin{cor}\label{cor: val field is O[1/pi] and val topology is pi-adic}
    The following hold:
    \begin{enumerate}[label=(\roman*)]
        \item The ring $\calK$ is the fractional field of $\calO$;
        \item The topology on $\calK$ is equal to the valuation topology defined by $\calO$.
    \end{enumerate}
\end{cor}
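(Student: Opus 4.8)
The plan is to deduce both statements directly from Lemma \ref{lemma: val rings radical}, which already does all the real work.

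For part (i), one inclusion is immediate: $\calO[\tfrac1\pi]$ sits inside the fraction field of $\calO$ since $\pi\neq 0$. For the reverse inclusion I would take an arbitrary element of the fraction field, written as $\tfrac{a}{b}$ with $a,b\in\calO$ and $b\neq 0$, and apply Lemma \ref{lemma: val rings radical} with $x=b$ to obtain an integer $n\geq 1$ and an element $c\in\calO$ with $\pi^n=bc$. Then $\tfrac ab=\tfrac{ac}{\pi^n}\in\calO[\tfrac1\pi]=\calK$, so $\calK$ is the fraction field of $\calO$.

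For part (ii), recall that the valuation topology on $\calK$ is the (additive) group topology whose fundamental system of neighborhoods of $0$ consists of the nonzero principal fractional ideals $x\calO$ for $x\in\calK^\times$; these are totally ordered by inclusion because $\calO$ is a valuation ring, so this collection is stable under finite intersections. The topology on $\calK$ defined above (for which $\calO$ is open with its $\pi$-adic topology) has fundamental system of neighborhoods of $0$ given by $\{\pi^n\calO:n\geq 1\}$. Since translations are homeomorphisms for both topologies, it suffices to show these two neighborhood filters at $0$ coincide, and for that it is enough to check the two bases are mutually cofinal. On one hand, each $\pi^n\calO$ is of the form $x\calO$ with $x=\pi^n\in\calK^\times$. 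On the other hand, given $x\in\calK^\times$, I distinguish two cases using the valuation ring property: if $x\in\calO$ then Lemma \ref{lemma: val rings radical} produces $n\geq 1$ with $\pi^n\in x\calO$, hence $\pi^n\calO\subseteq x\calO$; if $x\notin\calO$ then $x^{-1}\in\calO$, so $1=x\cdot x^{-1}\in x\calO$, whence $\calO\subseteq x\calO$ and in particular $\pi\calO\subseteq x\calO$.

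There is essentially no obstacle: the only point demanding a (minor) case distinction is $x\in\calO$ versus $x\notin\calO$ in the cofinality argument, and both cases are handled immediately, one by Lemma \ref{lemma: val rings radical} and the other by the definition of a valuation ring. All the substance of Corollary \ref{cor: val field is O[1/pi] and val topology is pi-adic} is already contained in Lemma \ref{lemma: val rings radical}, i.e., in the fact that $\pi$ lies in the radical of every nonzero ideal.
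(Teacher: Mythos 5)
Your proof is correct and takes essentially the same route as the paper: both parts are reduced to Lemma~\ref{lemma: val rings radical}, with part~(i) writing an element of the fraction field so that its denominator divides a power of $\pi$, and part~(ii) comparing neighborhood bases at~$0$. The only cosmetic difference is that the paper generates the valuation-topology neighborhood filter using only $x\calO$ with $0\neq x\in\calO$, which lets it skip the $x\notin\calO$ case you handle explicitly; that case is harmless (it just gives neighborhoods containing $\calO$), so the two arguments coincide in substance.
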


\begin{proof}
    Let $0\neq x\in \calO$. By Lemma \ref{lemma: val rings radical}, there exists an integer $N\ge 1$ such that $x^{-1}\pi^N\in \calO$. Thus, $x^{-1}\in \calO[\frac{1}{\pi}] \calK$, proving that $\calK$ is the fractional field of $\calO$.

    Regarding the topology, the collection $\{x\calO:0\neq x\in \calO\}$ forms a fundamental system of neighborhoods of $0$ in the valuation topology This implies that the valuation topology is finer than the $\pi$-adic topology on $\calO$. Conversely, Lemma \ref{lemma: val rings radical} shows that each nonzero principal ideal $x\calO$ contains $\pi^n\calO$ for some $n$. Hence, the $\pi$-adic topology is also finer than the valuation topology, and the two topologies coincide.
\end{proof}

The following lemma provides a simple torsion-theoretic criterion of an $\calO$-module being flat.

\begin{lemma}\label{lemma: val rings flat}
    Let $M$ be an $\calO$-module. Then $M$ is flat if and only if $M$ is $\pi$-torsion free.
\end{lemma}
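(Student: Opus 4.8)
The plan is to reduce to the classical equivalence ``flat $\Leftrightarrow$ torsion-free'' over a valuation ring, the only additional ingredient being that in a Tate valuation ring $\pi$-torsion freeness already implies honest torsion freeness; this is exactly where Lemma~\ref{lemma: val rings radical}, hence the $\pi$-adic completeness of $\calO$, will enter.

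For the forward direction I would use that flat modules over a domain are torsion-free: tensoring the injection $\calO \xrightarrow{\pi} \calO$ with a flat module $M$ shows that multiplication by $\pi$ is injective on $M$, so $M$ is $\pi$-torsion free.

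For the converse I would first show that any $\pi$-torsion free $\calO$-module $M$ is torsion-free in the usual sense. Given $0 \neq x \in \calO$ and $m \in M$ with $xm = 0$, Lemma~\ref{lemma: val rings radical} supplies $n \geq 1$ and $y \in \calO$ with $\pi^n = xy$; then $\pi^n m = y(xm) = 0$, and since multiplication by $\pi$ is injective on $M$, iterating $n$ times forces $m = 0$. Having reduced to torsion freeness, I would then invoke the ideal-wise criterion for flatness: it suffices to check $\Tor_1^{\calO}(\calO/I, M) = 0$ for every finitely generated ideal $I \subseteq \calO$. Since $\calO$ is a valuation ring every such $I$ is principal, $I = (a)$; the case $a = 0$ is immediate, and for $a \neq 0$ the free resolution $0 \to \calO \xrightarrow{a} \calO \to \calO/(a) \to 0$ (using that $\calO$ is a domain, so $a$ is a nonzerodivisor) identifies $\Tor_1^{\calO}(\calO/(a), M)$ with the submodule of $a$-torsion elements of $M$, which is zero by the previous step. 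Hence $M$ is flat.

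I do not anticipate a serious obstacle: the only step that uses more than ``$\calO$ is a valuation ring'' is the passage from $\pi$-torsion freeness to torsion freeness, and there Lemma~\ref{lemma: val rings radical} does precisely what is needed. If one preferred to avoid citing the general valuation-ring statement, the same computation can be phrased directly: $M$ is flat if and only if $I \otimes_\calO M \to M$ is injective for every finitely generated ideal $I$, which for principal $I = (a)$ is exactly the assertion that $M$ is $a$-torsion free.
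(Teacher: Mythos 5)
Your proof is correct and follows essentially the same route as the paper: the key step in both is using Lemma~\ref{lemma: val rings radical} to promote $\pi$-torsion freeness to full torsion freeness via $\pi^n \in a\calO$, after which one invokes the standard ``flat $\Leftrightarrow$ torsion-free'' equivalence over a valuation ring. The only difference is cosmetic — the paper cites this equivalence as a black box (Stacks Tag 0539), while you unpack it via the ideal-wise $\Tor$ criterion and the observation that finitely generated ideals of a valuation ring are principal.
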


\begin{proof}
    By \cite[\href{https://stacks.math.columbia.edu/tag/0539}{Tag 0539}]{stacks-project}, $M$ is flat if and only if for any $0\neq a\in \calO$, $M$ is $a$-torsion free. In particular, if $M$ is flat, then it is $\pi$-torsion free.

    Conversely, suppose $M$ is $\pi$-torsion free. Then for any integer $n\geq 1$, $M$ is $\pi^n$-torsion free. 
    Now, let $0\neq a\in\calO$ and suppose $am=0$ for some $m\in M$. By Lemma \ref{lemma: val rings radical}, there exists $n\geq 1$ such that $\pi^n\in a\calO$, so we may write $\pi^n=ab$ for some $0\neq b\in\calO$. Then $\pi^nm=b(am)=0$, and since $M$ is $\pi^n$-torsion free, it follows that $m=0$. Therefore, multiplication by $a$ is injective on $M$, and $M$ is flat. 
\end{proof}

Using this criterion, we can immediately deduce that flatness is preserved under completion.

\begin{cor}\label{cor: val ring cpt of flat is flat}
    If $M$ is a flat $\calO$-module, then its $\pi$-adic completion $\widehat{M}$ is also flat over $\calO$.
\end{cor}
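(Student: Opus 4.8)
The plan is to reduce to the torsion criterion of Lemma \ref{lemma: val rings flat}: since flatness over $\calO$ is equivalent to being $\pi$-torsion free, it suffices to prove that $\widehat M = \varprojlim_n M/\pi^n M$ has no $\pi$-torsion. I would argue directly with compatible systems, rather than attempting to identify $\widehat M/\pi^n\widehat M$ with $M/\pi^n M$, since the latter identification is not available without finiteness hypotheses on $M$.

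Concretely, I would take $x = (x_n)_n \in \widehat M \subseteq \prod_n M/\pi^n M$ with $\pi x = 0$, so that $\pi x_n = 0$ in $M/\pi^n M$ for every $n$. Choosing a lift $y_n \in M$ of $x_n$, this says $\pi y_n \in \pi^n M$, say $\pi y_n = \pi^n z_n$ with $z_n \in M$, hence $\pi(y_n - \pi^{n-1}z_n) = 0$ in $M$. Because $M$ is flat, it is $\pi$-torsion free by Lemma \ref{lemma: val rings flat}, so $y_n = \pi^{n-1} z_n \in \pi^{n-1}M$, i.e. $x_n \in \pi^{n-1}M/\pi^n M$. The compatibility of the system then forces the conclusion: the image of $x_n$ under $M/\pi^n M \to M/\pi^{n-1}M$ is on the one hand $x_{n-1}$, and on the other hand $0$ since $x_n$ already lies in $\pi^{n-1}M/\pi^n M$. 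Thus $x_{n-1} = 0$ for all $n \ge 1$, i.e. $x = 0$, so $\widehat M$ is $\pi$-torsion free, and another application of Lemma \ref{lemma: val rings flat} shows that $\widehat M$ is flat over $\calO$.

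I do not expect a genuine obstacle here; the proof is short and elementary. The only point requiring care is to avoid the (generally false) assertion $\widehat M/\pi\widehat M \cong M/\pi M$, and instead to exploit the $\pi$-torsion-freeness of $M$ itself to push each component $x_n$ into $\pi^{n-1}M/\pi^n M$, after which the compatibility of the inverse system does the rest.
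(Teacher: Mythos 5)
Your proposal is correct and follows the same route as the paper: reduce via Lemma \ref{lemma: val rings flat} to showing $\widehat M$ is $\pi$-torsion free. The paper simply asserts that the completion of a $\pi$-torsion-free module is $\pi$-torsion free, whereas you spell out the inverse-system argument justifying this; your extra care (not conflating $\widehat M/\pi^n\widehat M$ with $M/\pi^n M$) is a genuine and valid point, but it does not change the underlying approach.
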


\begin{proof}
    By Lemma \ref{lemma: val rings flat}, the flatness of $M$ implies that it is $\pi$-torsion free. Its $\pi$-adic completion $\widehat{M}$ is then also $\pi$-torsion free. Applying Lemma \ref{lemma: val rings flat} again shows that $\widehat{M}$ is flat over $\mathcal{O}$.
\end{proof}

It is a standard fact that in a valuation ring, an ideal is prime if and only if it is radical. Let $\mathcal{O}'$ be the localization of $\mathcal{O}$ at the prime ideal $\sqrt{\pi\mathcal{O}}$.

\begin{lemma}
    The ring $\mathcal{O}'$ is an open and bounded subset of $\mathcal{K}$. Moreover, there exists a norm $|\bigcdot| : \mathcal{K}^\times \to \mathbb{R}$ with respect to which $\mathcal{K}$ is a complete non-Archimedean field, and the topology on $\mathcal{K}$ coincides with the topology defined by this norm.
\end{lemma}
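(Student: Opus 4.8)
The plan is to realise the required norm as the real-valued valuation of the localization $\calO'$, and then to transport the topology and completeness from $\calO$ to $\calK$ using the single chain of inclusions $\pi\calO'\subseteq\calO\subseteq\calO'$.

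First I would identify the prime $\mathfrak{p}:=\sqrt{\pi\calO}$. By Lemma \ref{lemma: val rings radical} the element $\pi$ lies in every nonzero prime ideal of $\calO$, so every nonzero prime contains $\mathfrak p$; since $\pi\ne 0$, the ideal $\mathfrak p$ is itself a (nonzero) prime, hence is the unique minimal nonzero prime of $\calO$. Consequently $\calO'=\calO_{\mathfrak p}$ has exactly the two prime ideals $(0)$ and $\mathfrak p\calO'$, so it is a valuation ring of Krull dimension $1$; being a localization of $\calO$, its fraction field is $\calK$. A rank-one valuation ring has an archimedean value group, so by Hölder's theorem that value group admits an order-embedding into $(\dR,+)$; fixing one, the resulting real-valued valuation $v'\colon\calK^\times\to\dR$ and the formula $|x|:=e^{-v'(x)}$ (with $|0|:=0$) define a non-Archimedean norm $|\bigcdot|\colon\calK\to\dR_{\ge 0}$ with $\calO'=\{x\in\calK:|x|\le 1\}$, and with $|\pi|<1$ because $\pi\in\mathfrak p\calO'$ is a non-unit of $\calO'$. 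Multiplicativity and the ultrametric inequality hold because $\calO'$ is a valuation ring.

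Next I would verify $\pi\calO'\subseteq\calO\subseteq\calO'$. The second inclusion is obvious. For the first, write a typical element of $\calO'$ as $a/s$ with $a\in\calO$ and $s\in\calO\setminus\mathfrak p$; then $s\notin\pi\calO$, and since $\calO$ is a valuation ring this forces $\pi\in s\calO$, say $\pi=sb$ with $b\in\calO$, so that $\pi\cdot(a/s)=ab\in\calO$. Multiplying by powers of $\pi$ gives $\pi^{m+1}\calO'\subseteq\pi^{m}\calO\subseteq\pi^{m}\calO'$ for every $m\ge 0$. Because $\calO$ is an open additive subgroup of $\calK$ and $\calO\subseteq\calO'$, the subgroup $\calO'$ is open in $\calK$; and $\calO'$ is bounded, since every basic neighborhood $\pi^{m}\calO$ of $0$ contains $\pi^{m+1}\calO'$. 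This settles the first assertion of the lemma.

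It remains to compare topologies and prove completeness. From $\calO'=\{x\in\calK:|x|\le 1\}$ and $|\pi|<1$ one gets $\pi^{n}\calO'=\{x\in\calK:|x|\le|\pi|^{n}\}$, so $\{\pi^{n}\calO'\}_{n\ge 0}$ is a neighborhood basis of $0$ for the norm topology, while $\{\pi^{n}\calO\}_{n\ge 0}$ is one for the given topology; the inclusions $\pi^{n+1}\calO'\subseteq\pi^{n}\calO\subseteq\pi^{n}\calO'$ make these two families mutually cofinal, so the two topologies on $\calK$ agree. For completeness, a Cauchy sequence $(x_i)$ in $(\calK,|\bigcdot|)$ is bounded (apply the Cauchy condition with $\epsilon=1$), so $x_i\in\pi^{-M}\calO'$ for some $M\ge 0$; then $z_i:=\pi^{M+1}x_i\in\pi\calO'\subseteq\calO$ forms a Cauchy sequence lying in $\calO$, whose subspace topology is the $\pi$-adic one by hypothesis, and $\calO$ is $\pi$-adically complete, so $(z_i)$ converges in $\calO\subseteq\calK$ and therefore so does $(x_i)$. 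I expect the main obstacle to be the two structural inputs above --- showing $\sqrt{\pi\calO}$ is the minimal nonzero prime, so that $\calO'$ has rank exactly $1$, and the inclusion $\pi\calO'\subseteq\calO$, which is what links the chosen rank-one valuation back to the prescribed $\pi$-adic topology --- after which everything reduces to routine topological-group bookkeeping.
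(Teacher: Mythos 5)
Your proof is correct and follows essentially the same approach as the paper's: identify $\sqrt{\pi\calO}$ as the unique minimal nonzero prime (so $\calO'$ has rank one), establish the chain $\pi\calO'\subseteq\calO\subseteq\calO'$ to get openness and boundedness, then embed the rank-one value group into $\dR$. You spell out the cofinality and completeness transfer steps that the paper treats as routine once rank one is known, and your direct computation with $a/s$ for the inclusion $\pi\calO'\subseteq\calO$ is a cleaner phrasing of the paper's observation that the maximal ideal of $\calO'$ lands in $\calO$.
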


\begin{proof}
    Since $\mathcal{O}'$ is a localization of $\mathcal{O}$, it is also a valuation ring and contains $\mathcal{O}$. For any $x \in \mathcal{O}' \setminus \mathcal{O}$, we have $x^{-1} \in \mathcal{O} \subseteq \mathcal{O}'$ because $\mathcal{O}$ is a valuation ring. This shows that the maximal ideal of $\mathcal{O}$ is contained in $\mathcal{O}'$, and hence $\pi \mathcal{O}' \subseteq \mathcal{O}$. Therefore, $\mathcal{O}'$ is bounded. Its openness follows from the inclusion $\mathcal{O} \subseteq \mathcal{O}'$.
    
    It remains to show that $\mathcal{O}'$ has rank one, which is equivalent to its Krull dimension being one. By Lemma \ref{lemma: val rings radical}, every nonzero prime ideal of $\mathcal{O}$ contains $\sqrt{\pi\mathcal{O}}$. Thus, localizing at $\sqrt{\pi\mathcal{O}}$ yields a ring of Krull dimension one.
\end{proof}

\noindent\textbf{Complete differential modules}

In this paper, we will focus exclusively on the theory of complete differential modules in the context of extensions of Tate valuation rings. Let us fix a Tate valuation ring $\mathcal{O}$ with a pseudo-uniformizer $\pi \in \mathcal{O}$, and let $\mathcal{K} = \mathcal{O}[\frac{1}{\pi}]$ denote its fraction field.

For simplicity, assume that $\calO$ is $p$-adically complete.

\begin{defi}\label{defi: cpt cotangent cpt dif}
    Let $\mathcal{O}'/\mathcal{O}$ be an extension of Tate valuation rings. We define the \emph{complete cotangent complex} of $\mathcal{O}'/\mathcal{O}$ as the $\pi$-adic derived completion of the usual cotangent complex $\mathbb{L}_{\mathcal{O}'/\mathcal{O}}$, and denote it by $\widehat{\mathbb{L}}_{\mathcal{O}'/\mathcal{O}}$. By Corollary \ref{cor: set of psu}, this definition is independent of the choice of $\pi$.

    Furthermore, we define the \emph{complete differential module} of the extension as
    \[
    \widehat{\Omega}_{\mathcal{O}'/\mathcal{O}} = \hol^0\bigl(\widehat{\mathbb{L}}_{\mathcal{O}'/\mathcal{O}}\bigr),
    \]
    i.e., the zeroth cohomology of the complete cotangent complex.
\end{defi}

We now state several key properties of these objects that will be used in the paper.

\begin{theo}\label{theo: cpt diff mod of val rings}
    If $\mathcal{O}$ is perfectoid (in the sense of \cite[Definition 3.5]{bhatt2018integral}), then $\widehat{\mathbb{L}}_{\mathcal{O}'/\mathcal{O}} \simeq \widehat{\Omega}_{\mathcal{O}'/\mathcal{O}}[0]$, and the latter is a flat $\mathcal{O}'$-module.
\end{theo}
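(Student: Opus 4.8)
The plan is to compute $\cdL_{\calO'/\calO}$ after base change along $\calO'\to R$, where $R$ is a perfectoid ``cover'' of $\calO'$, and then descend. Set $\calK'=\calO'[\tfrac1\pi]$ (a complete non-Archimedean field, with valuation ring $\calO'$ after localising at $\sqrt{\pi\calO'}$ if needed), fix an algebraic closure $\overline{\calK'}$, let $\overline{\calO'}$ be its valuation ring --- a valuation ring since $\calK'$ is Henselian, being $\pi$-adically complete --- and let $R=\widehat{\overline{\calO'}}$ be the $\pi$-adic completion. Then $R$ is perfectoid, being the ring of integers of the completion of an algebraically closed field of residue characteristic $p$, and we have $\calO\subseteq\calO'\subseteq R$ with both $\calO$ and $R$ perfectoid. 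I would use two standard facts: (i) the $\pi$-completed cotangent complex of a map of perfectoid rings vanishes, so $\cdL_{R/\calO}\simeq 0$ (compare each side to $W((-)^\flat)$, whose cotangent complex over $\dZ_p$ is $p$-completely trivial as $(-)^\flat$ is perfect, cf.\ \cite{bhatt2018integral}; in characteristic $p$ this is immediate since a perfectoid ring is perfect); and (ii) $\cdL_{R/\overline{\calO'}}\simeq 0$ because $R$ is the $\pi$-adic completion of $\overline{\calO'}$. Feeding these into the transitivity triangles for $\calO\to\calO'\to R$ and $\calO'\to\overline{\calO'}\to R$ yields
\[
\cdL_{\calO'/\calO}\cotimes_{\calO'}R\;\simeq\;\cdL_{R/\calO'}[-1]\;\simeq\;\bigl(\dL_{\overline{\calO'}/\calO'}\bigr)^{\wedge}_{\pi}[-1].
\]
Since $\calO'\to R$ is $\pi$-completely faithfully flat ($R$ is the completion of the filtered colimit $\overline{\calO'}=\colim_{L'}\calO_{L'}$ of finite flat local --- hence faithfully flat --- extensions), faithfully flat descent reduces the theorem to the claim that $\bigl(\dL_{\overline{\calO'}/\calO'}\bigr)^{\wedge}_{\pi}$ is a flat $R$-module placed in cohomological degree $-1$.

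To prove this I would analyse $\dL_{\overline{\calO'}/\calO'}$ directly. By the structure theory of cotangent complexes of extensions of valuation rings (Gabber--Ramero), $\dL_{\overline{\calO'}/\calO'}$ lies in cohomological degrees $[-1,0]$, its $H^{-1}$ is a torsion-free $\overline{\calO'}$-module, and $H^0=\Omega_{\overline{\calO'}/\calO'}$. The module $\Omega_{\overline{\calO'}/\calO'}$ is $\pi$-power torsion, since inverting $\pi$ gives $\Omega_{\overline{\calK'}/\calK'}=0$ ($\overline{\calK'}$ being perfect), and it is moreover $\pi$-divisible: in characteristic $p$ it vanishes outright, because for $x\in\overline{\calO'}$ one has $x^{1/p}\in\overline{\calO'}$ and $\dif x=p\,(x^{1/p})^{p-1}\dif(x^{1/p})=0$; and in characteristic $0$ the analogous identity with $p^m$-th roots (again lying in $\overline{\calO'}$) exhibits every $\dif x$, hence all of $\Omega_{\overline{\calO'}/\calO'}$, as divisible by every power of $p$. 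Now pass to $\pi$-adic derived completions via the truncation triangle $H^{-1}(\dL_{\overline{\calO'}/\calO'})[1]\to\dL_{\overline{\calO'}/\calO'}\to\Omega_{\overline{\calO'}/\calO'}[0]$: the completion of the torsion-free module $H^{-1}$ is flat and in degree $0$ (a torsion-free module over a rank-one valuation ring has flat $\pi$-adic completion, as in Corollary~\ref{cor: val ring cpt of flat is flat}), so its shift is flat in degree $-1$; and the derived $\pi$-completion of the $\pi$-divisible torsion module $\Omega_{\overline{\calO'}/\calO'}$ is $\bigl(\varprojlim_n\Omega_{\overline{\calO'}/\calO'}[\pi^n]\bigr)[1]$ --- flat in degree $-1$, the inverse limit being $\pi$-torsion free because the transition maps $(-)[\pi^{n+1}]\xrightarrow{\pi}(-)[\pi^n]$ are surjective (exactly as the derived $p$-completion of $\dQ_p/\dZ_p$ equals $\dZ_p[1]$). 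An extension of two complexes that are flat in degree $-1$ is again flat in degree $-1$, so $\bigl(\dL_{\overline{\calO'}/\calO'}\bigr)^{\wedge}_{\pi}$ has the required form. Unwinding the displayed equivalences and descending, $\cdL_{\calO'/\calO}$ is a flat $\calO'$-module concentrated in cohomological degree $0$; by Definition~\ref{defi: cpt cotangent cpt dif} that module is $\cOmega_{\calO'/\calO}$, which is exactly the assertion.

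The step I expect to be the main obstacle is the analysis of $\dL_{\overline{\calO'}/\calO'}$ and of its completion: the $\pi$-divisibility of $\Omega_{\overline{\calO'}/\calO'}$ and the torsion-freeness of $H^{-1}$ are the concrete shadow of the ``deeply ramified''/almost-purity phenomenon (finite extensions of a valuation ring containing a perfectoid are almost unramified, so their differents become arbitrarily $\pi$-divisible along the tower $\overline{\calO'}=\colim_{L'}\calO_{L'}$), and this must be pushed through both the filtered colimit and the formation of derived $\pi$-completions, carefully tracking how the two cohomology groups interact after completing. A secondary technical point is the $\pi$-completely faithfully flat descent of the property ``flat module concentrated in degree $0$'' along $\calO'\to R$, together with the verification that $\calO'\to R$ really is $\pi$-completely faithfully flat (which uses completeness of $\calK'$ to make each $\calO_{L'}$ local). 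The remaining ingredients --- the Gabber--Ramero amplitude-and-torsion statement for valuation extensions, the perfectoidness of $R$, and the vanishing of the relative $\pi$-completed cotangent complex of a map of perfectoid rings --- I would treat as black boxes.
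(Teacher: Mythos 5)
Your proposal is correct in its essential structure, but it takes a genuinely different route from the paper. The paper's own proof is essentially a citation: in characteristic $p$ it invokes Bouis (\cite[Theorem 3.4]{Bouis_2023}), and in mixed characteristic it invokes \cite[Theorem 3.1]{Bouis_2023} together with derived Nakayama and Lemma \ref{lemma: val rings flat} to upgrade $p$-complete flatness to honest flatness. Your argument, by contrast, re-derives the content of Bouis's theorems from first principles: you embed $\calO'$ into the perfectoid ring $R=\widehat{\overline{\calO'}}$, exploit $\cdL_{R/\calO}\simeq 0$ (perfectoid--perfectoid) and $\cdL_{R/\overline{\calO'}}\simeq 0$ (completion), reduce via transitivity triangles and $\pi$-completely faithfully flat descent to the structure of $\dL_{\overline{\calO'}/\calO'}$, and then analyse that via Gabber--Ramero's amplitude theorem together with the $\pi$-divisibility coming from deep ramification. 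This is close in spirit to what Bouis actually does; what it buys is a self-contained argument that makes visible exactly which inputs (vanishing of the perfectoid cotangent complex, Gabber--Ramero, derived completion of divisible torsion) drive the result, at the cost of substantially more length and several technical verifications.

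A few points to tighten before this could replace the paper's citation. First, the theorem is stated for an extension of Tate valuation rings without a rank-one hypothesis; your parenthetical ``with valuation ring $\calO'$ after localising at $\sqrt{\pi\calO'}$ if needed'' does not actually resolve the higher-rank case, since localising changes the ring whose cotangent complex you are computing. For the applications in this paper $\calO'$ is always rank one, so this is harmless here, but it should be flagged. Second, your divisibility argument produces $p$-divisibility of $\Omega_{\overline{\calO'}/\calO'}$, whereas the derived completion calculation needs $\pi$-divisibility; one should add the observation that since $\pi$ and $p$ generate the same radical and $\calO'$ is rank one, $M=pM$ forces $M=\pi^n M$ for some $n\ge 1$, hence $M=\pi M$. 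Third, the parenthetical ``$\overline{\calK'}$ being perfect'' as the reason $\Omega_{\overline{\calK'}/\calK'}=0$ is only the right justification in characteristic $p$; in characteristic $0$ the correct reason is that the algebraic extension $\overline{\calK'}/\calK'$ is separable. Finally, in your reduction to $\overline{\calO'}$ you describe each $\calO_{L'}$ as ``finite flat local''; finiteness of $\calO_{L'}$ over $\calO'$ can fail (defect), but this is immaterial since a flat local map of local rings is automatically faithfully flat, which is all you use.
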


\begin{proof}
    In positive characteristic, the result is precisely \cite[Theorem 3.4]{Bouis_2023}.

    Now assume $\calO$ is of mixed characteristic $(0,p)$. By {\cite[Theorem 3.1]{Bouis_2023}} and the derievd Nakayama's lemma, we have $\widehat{\mathbb{L}}_{\mathcal{O}'/\mathcal{O}} \simeq \widehat{\Omega}_{\mathcal{O}'/\mathcal{O}}$, and $\widehat{\Omega}_{\mathcal{O}'/\mathcal{O}}$ is $p$-completely flat. It then follows that $\widehat{\Omega}_{\mathcal{O}'/\mathcal{O}}$ is $p$-torsion free and flat (cf. Lemma \ref{lemma: val rings flat}).
\end{proof}

\begin{cor}
    Under the assumptions of Theorem \ref{theo: cpt diff mod of val rings}, the module $\widehat{\Omega}_{\mathcal{O}'/\mathcal{O}}$ is classically $\pi$-adically complete.
\end{cor}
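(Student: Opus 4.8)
Write $N := \widehat{\Omega}_{\calO'/\calO}$, and fix a pseudo-uniformizer $\pi \in \calO$; by Corollary \ref{cor: set of psu} nothing depends on this choice. The plan is to compare the classical $\pi$-adic completion $\lim_n N/\pi^n N$ of $N$ with the \emph{derived} $\pi$-adic completion, and then to observe that the derived completion is $N$ itself, so that the canonical map $N \to \lim_n N/\pi^n N$ is an isomorphism --- which is precisely the assertion.

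The first step is to check that $N$ is derived $\pi$-complete as an $\calO'$-module. This is purely formal from Theorem \ref{theo: cpt diff mod of val rings}: the complex $\widehat{\mathbb{L}}_{\calO'/\calO}$ is by construction a $\pi$-adic derived completion, hence derived $\pi$-complete, and a complex is derived $\pi$-complete precisely when every one of its cohomology modules is; since Theorem \ref{theo: cpt diff mod of val rings} identifies $\widehat{\mathbb{L}}_{\calO'/\calO}$ with $N$ placed in degree $0$, the module $N$ is derived $\pi$-complete.

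The second step brings in flatness. By Theorem \ref{theo: cpt diff mod of val rings}, $N$ is a flat $\calO'$-module, and since $\calO'$ is a domain with $\pi \neq 0$ this forces $N$ to be $\pi$-torsion free. Consequently $N \otimes^L_{\calO'} \calO'/\pi^n \simeq N/\pi^n N$, concentrated in degree $0$, for every $n$, so the derived $\pi$-adic completion of $N$ is computed by $R\lim_n N/\pi^n N$; and because the tower $\{N/\pi^n N\}_n$ has surjective transition maps, the $\lim^1$-term vanishes and this $R\lim$ reduces to $\lim_n N/\pi^n N$ sitting in degree $0$. Feeding in the first step, the canonical arrow $N \to \lim_n N/\pi^n N$ is an isomorphism, i.e.\ $N$ is classically $\pi$-adically complete.

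I do not foresee a genuine obstacle: modulo the structural input of Theorem \ref{theo: cpt diff mod of val rings}, this is the familiar statement that derived and classical $\pi$-adic completion agree for $\pi$-torsion-free modules (see e.g.\ \cite{stacks-project}, the section on derived completion). The only mildly delicate points are that derived $\pi$-completeness of a complex is detected on its cohomology modules, and the vanishing of the relevant $\lim^1$ for a tower with surjective transition maps; both are standard.
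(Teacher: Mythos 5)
Your proof is correct and follows essentially the same path as the paper's: observe that $\widehat{\Omega}_{\calO'/\calO}$ is derived $\pi$-complete (because Theorem \ref{theo: cpt diff mod of val rings} identifies it with the derived complete complex $\widehat{\mathbb{L}}_{\calO'/\calO}$), then use flatness, hence $\pi$-torsion-freeness, to conclude that derived and classical $\pi$-adic completion coincide. The only difference is that you spell out the standard $R\lim$/Mittag--Leffler mechanism, which the paper leaves implicit.
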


\begin{proof}
    By construction, $\widehat{\Omega}_{\mathcal{O}'/\mathcal{O}}$ is derived $\pi$-complete. Since it is flat over $\mathcal{O}'$ by Theorem \ref{theo: cpt diff mod of val rings}, its derived completion agrees with its classical $\pi$-adic completion. In particular, $\widehat{\Omega}_{\mathcal{O}'/\mathcal{O}}$ is classically $\pi$-adically complete.
\end{proof}

The following result provides a useful method for explicitly computing complete differential modules in practice.

\begin{theo}\label{theo: calculate dif module by not complete}
    Let $\mathcal{O}'/\mathcal{O}$ be a (not necessarily complete) valuation ring extension. Then:

\begin{enumerate}[label=(\roman*)]
    \item The $\pi$-adic completion $\widehat{\mathcal{O}}'$ of $\mathcal{O}'$ is a valuation ring;
    \item The complete cotangent complex $\widehat{\mathbb{L}}_{\widehat{\mathcal{O}}'/\mathcal{O}}$ is equal to the derived $\pi$-completion of $\mathbb{L}_{\mathcal{O}'/\mathcal{O}}$.
\end{enumerate}

Moreover, if we further assume that $\mathcal{O}$ is perfectoid, then $\widehat{\Omega}_{\widehat{\mathcal{O}}'/\mathcal{O}}$ is equal to the $\pi$-adic completion of $\Omega_{\mathcal{O}'/\mathcal{O}}$.
\end{theo}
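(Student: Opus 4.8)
The plan is to treat the three assertions in turn; part (i) carries most of the work, after which part (ii) is essentially formal and the last assertion follows by combining (ii) with Theorem \ref{theo: cpt diff mod of val rings}.

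For part (i), I would first reduce to the $\pi$-adically separated case. The ideal $\frakm=\bigcap_{n\ge 1}\pi^n\calO'$ is prime in the valuation ring $\calO'$ (if $ab\in\frakm$ while $v(a)<nv(\pi)$ for some $n$, then $v(b)$ must exceed every multiple of $v(\pi)$, so $b\in\frakm$), and $\frakm\cap\calO=0$ since $\calO$ is $\pi$-adically complete; thus $\calO'\to\calO'/\frakm$ is a surjection of valuation rings inducing the identity on $\pi$-adic completions and keeping $\calO$ embedded, so one may assume $\calO'$ is $\pi$-adically separated, whence $\widehat\calO'=\varprojlim_n\calO'/\pi^n$ is $\pi$-adically complete and separated and $\pi$ is a nonzerodivisor on it (immediate from the valuation-ring structure of each $\calO'/\pi^n$). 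Next, $\widehat\calO'$ is a domain: every nonzero element is $\pi^m u$ with $u\notin\pi\widehat\calO'$ (write $w(\cdot)$ for this exponent $m$), so a product of two nonzero elements reduces to the case $u,u'\notin\pi\widehat\calO'$, and choosing lifts in $\calO'$ of value $<v(\pi)$ of the relevant truncations, additivity of $v$ forces $uu'\ne 0$. Finally, $\widehat\calO'$ is a valuation ring: given nonzero $x,y$, each $\calO'/\pi^n$ has totally ordered principal ideals, so for every $n$ one of $x\mid y$, $y\mid x$ holds modulo $\pi^n$; by pigeonhole one alternative holds for infinitely many $n$, hence -- reducing modulo smaller powers -- for all $n$, say $\bar y_n\mid\bar x_n$ for all $n$. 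A chosen solution $c_n$ of $\bar y_nc_n=\bar x_n$ is well defined modulo $\operatorname{ann}_{\calO'/\pi^n}(\bar y_n)$, which a short computation with values of lifts shows lies in $\pi^{\,n-w(y)-1}(\calO'/\pi^n)$; so the images of the $c_n$ in $\calO'/\pi^{\,n-w(y)-1}$ form a compatible system whose limit $c\in\widehat\calO'$ satisfies $yc=x$. (Alternatively, one may cite the standard fact that the $\pi$-adic completion of a valuation ring is a valuation ring.)

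For part (ii), I would apply the transitivity triangle of $\calO\to\calO'\to\widehat\calO'$,
\[\mathbb{L}_{\calO'/\calO}\otimes^{\mathbb{L}}_{\calO'}\widehat\calO'\longrightarrow\mathbb{L}_{\widehat\calO'/\calO}\longrightarrow\mathbb{L}_{\widehat\calO'/\calO'},\]
and apply derived $\pi$-completion (a triangulated functor). As $\calO'$ is a domain it is $\pi$-torsion free, so $\widehat\calO'$ is its derived $\pi$-completion and the completion of the left-hand term is canonically $\widehat{\mathbb{L}}_{\calO'/\calO}$ (derived completion absorbs the base change along $\calO'\to\widehat\calO'$). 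It then suffices to show the derived $\pi$-completion of $\mathbb{L}_{\widehat\calO'/\calO'}$ vanishes: the map $\calO'\to\widehat\calO'$ is an isomorphism modulo $\pi$ and $\widehat\calO'\otimes^{\mathbb{L}}_{\calO'}\calO'/\pi\simeq\widehat\calO'/\pi$ (as $\pi$ is a nonzerodivisor on both), so base change for the cotangent complex gives $\mathbb{L}_{\widehat\calO'/\calO'}\otimes^{\mathbb{L}}_{\widehat\calO'}\widehat\calO'/\pi\simeq\mathbb{L}_{(\widehat\calO'/\pi)/(\calO'/\pi)}=0$, and since the derived completion is derived $\pi$-complete, derived Nakayama forces it to be zero. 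Hence $\widehat{\mathbb{L}}_{\widehat\calO'/\calO}\simeq\widehat{\mathbb{L}}_{\calO'/\calO}$.

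For the last assertion, part (i) makes $\widehat\calO'/\calO$ an extension of Tate valuation rings with $\calO$ perfectoid, so Theorem \ref{theo: cpt diff mod of val rings} gives that $\widehat{\mathbb{L}}_{\widehat\calO'/\calO}\simeq\widehat\Omega_{\widehat\calO'/\calO}[0]$ is concentrated in degree $0$ and flat over $\widehat\calO'$; by (ii) the same holds for $\widehat{\mathbb{L}}_{\calO'/\calO}$. It remains to identify $\hol^0$ of the derived $\pi$-completion of $\mathbb{L}_{\calO'/\calO}$ with the classical $\pi$-adic completion of $\Omega_{\calO'/\calO}$. Writing that derived $\pi$-completion as $R\varprojlim_n\mathbb{L}_{(\calO'/\pi^n)/(\calO/\pi^n)}$ (base change along the flat map $\calO\to\calO'$) and using base change for K\"ahler differentials, one obtains a short exact sequence $0\to{\varprojlim}^1_n H_1\bigl(\mathbb{L}_{(\calO'/\pi^n)/(\calO/\pi^n)}\bigr)\to\widehat\Omega_{\widehat\calO'/\calO}\to(\Omega_{\calO'/\calO})^{\wedge}_\pi\to 0$, split by the natural map; so it suffices to show this ${\varprojlim}^1$-term vanishes, which (by base change again, and since ${\varprojlim}^1$ of a tower with surjective transition maps vanishes) reduces to ${\varprojlim}^1_n\bigl(\Omega_{\calO'/\calO}[\pi^n]\bigr)=0$, the transition maps being multiplication by $\pi$. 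This holds once $\Omega_{\calO'/\calO}$ has bounded $\pi$-power torsion, in particular when it is $\pi$-torsion free -- the uncompleted analogue of the flatness in Theorem \ref{theo: cpt diff mod of val rings} for valuation-ring extensions over a perfectoid base (as in the work of Bouis and of Gabber--Ramero). The step I expect to be the main obstacle is part (i), specifically the construction of the quotient $c$ with $yc=x$: this requires coherently patching the approximate quotients $c_n$, for which the uniform bound on $\operatorname{ann}_{\calO'/\pi^n}(\bar y_n)$ is essential; granting (i), part (ii) is formal, and the last assertion reduces to (ii), Theorem \ref{theo: cpt diff mod of val rings}, and control of the $\pi$-power torsion of $\Omega_{\calO'/\calO}$.
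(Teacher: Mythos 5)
Your argument is correct and, for part (ii) and the final claim, rests on the same core inputs as the paper: the transitivity triangle together with derived Nakayama to kill $\widehat{\mathbb{L}}_{\widehat{\calO}'/\calO'}$, Gabber--Ramero's concentration of $\mathbb{L}_{\calO'/\calO}$ in degree $0$, and the flatness coming from the perfectoid hypothesis via Bouis. The genuine divergence is in part (i): the paper simply notes that $\widehat{\calO}'$ is $\pi$-completely flat over $\calO$ with $\widehat{\calO}'/\pi\cong\calO'/\pi$ and then appeals to the proof of \cite[Lemma 3.13]{Bouis_2023}, whereas you build the valuation-ring structure on $\widehat{\calO}'=\varprojlim_n\calO'/\pi^n$ by hand -- reducing to the $\pi$-adically separated case through the prime $\frakm=\bigcap_n\pi^n\calO'$, showing $\widehat{\calO}'$ is a domain via the exponent $w(\cdot)$, and dividing $x$ by $y$ by patching the approximate quotients $c_n$ using the uniform bound $\operatorname{ann}_{\calO'/\pi^n}(\bar{y}_n)\subseteq\pi^{n-w(y)-1}(\calO'/\pi^n)$. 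I checked that bound and the compatibility of the tower $(c_n\bmod\pi^{n-w(y)-1})$; both hold. This elementary route is self-contained, which is a gain over citing Bouis, at the cost of length. Two smaller remarks on the last assertion: (a) your detour through the Milnor $\varprojlim^1$-sequence is unnecessary -- once $\mathbb{L}_{\calO'/\calO}\simeq\Omega_{\calO'/\calO}[0]$ with $\Omega_{\calO'/\calO}$ $\pi$-torsion-free, the derived $\pi$-completion is the classical one in a single step; and (b) you assert rather than derive that torsion-freeness, whereas the paper pins it down by combining Gabber--Ramero (so $\mathbb{L}=\Omega[0]$) with Bouis's flatness of $\mathbb{L}_{\calO'/\calO}\otimes^{\mathbb{L}}_{\calO}\calO/\pi$, which forces $\Omega_{\calO'/\calO}[\pi]=0$; it would be worth making that deduction explicit.
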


\begin{proof}
    Since $\calO'$ is flat over $\calO$ by Lemma \ref{lemma: val rings flat}, is completion $\widehat{\calO}'$ is equal to the derived $\pi$-completion of $\calO'$. Hence, $\widehat\calO'$ is $\pi$-completely flat over $\calO$ and 
    \[\widehat{\calO}'/\pi\cong \calO'/\pi,\]
    the first claim then follows from the proof of \cite[Lemma 3.13]{Bouis_2023}.

    The second claim is immediate from the definition of derived completion.

For the last claim, since $\mathcal{O}$ is perfectoid, it follows from \cite[Theorem 3.1]{Bouis_2023} that $\mathbb{L}_{\mathcal{O}'/\mathcal{O}} \otimes_{\mathcal{O}}^{\mathbb{L}} \mathcal{O}/\pi$ is a flat $\mathcal{O}'/\pi$-module. Moreover, by \cite[Theorem 6.5.8]{Gabber_2003}, we have $\mathbb{L}_{\mathcal{O}'/\mathcal{O}} = \Omega_{\mathcal{O}'/\mathcal{O}}[0]$. Combining these facts, we deduce that $\Omega_{\mathcal{O}'/\mathcal{O}}$ is flat over $\mathcal{O}'$. By definition, the derived completion of $\Omega_{\mathcal{O}'/\mathcal{O}}$ is therefore isomorphic to its classical $\pi$-adic completion. This completes the proof.
\end{proof}

\noindent\textbf{Uniformization of one dimensional analytic fields: } Let $k$ be an algebraically closed complete non-Archimedean field, and let $\mathcal{K}/k$ be a one-dimensional analytic field (cf. Definition \ref{defi:1 dim an field}).

The following fundamental uniformization result is due to Temkin.

\begin{theo}[Temkin]\label{theo:uniformization}
There exists an element $z \in \mathcal{K} \setminus k$ satisfying the following:
\begin{itemize}
    \item The extension $\mathcal{K}/k(z)$ is unramified. That is, the corresponding extension between rings of integers is finite étale.
\end{itemize}
Such an element $z$ is called a \emph{uniformization} of $\mathcal{K}$.
\end{theo}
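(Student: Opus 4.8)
This is Temkin's theorem, part of the structure theory of Berkovich curves over an algebraically closed field; here is the line I would follow.

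The plan is first to realise $\calK$ as a completed residue field on an algebraic curve. We may assume $\calK\neq k$, and by Definition~\ref{defi:1 dim an field} $\calK$ is finite over $L_0:=\widehat{k(z_0)}$ for some $z_0$. Since $\tilde k$ is algebraically closed, hence perfect, the extension $\calK/k$ has a separating transcendence basis, so after replacing $z_0$ we may assume $\calK/k(z_0)$, and hence $\calK/L_0$, is finite separable. By Krasner's lemma a primitive element of $\calK/L_0$ may be taken to be a root of an irreducible separable polynomial $g\in k(z_0)[T]$; then $\calK$ is a field factor of $L_0\otimes_{k(z_0)}\bigl(k(z_0)[T]/(g)\bigr)$, i.e.\ $\calK=\mathcal H(x)$ for a point $x$ lying over $x_0\in\dA^{1,\an}_k$ on the smooth projective curve $C/k$ with function field $k(z_0)[T]/(g)$. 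The point $x$ has the same type as $\calK$, so $x$ is of type $2$, $3$ or $4$.

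Next I would construct the coordinate $z$ according to the type of $\calK$. If $\calK$ is of type $2$, its residue field $\kappa$ is a transcendence degree one function field over the algebraically closed field $\tilde k$, hence has a separating transcendence element; lifting it to $z\in\calK^+$ with $|z|=1$, a short computation with the ultrametric inequality shows that the valuation restricted to $k(z)$ is the Gauss valuation, so $\widehat{k(z)}^+$ has value group $|k^\times|$ and residue field $\tilde k(\bar z)$, with $\kappa/\tilde k(\bar z)$ finite separable. If $\calK$ is of type $3$, then $\calK$ is Abhyankar, so $|\calK^\times|/|k^\times|$ is finitely generated, torsion free (as $|k^\times|$ is divisible), and of rational rank one by the Abhyankar equality, hence infinite cyclic; choosing $z\in\calK^\times$ whose value generates it modulo $|k^\times|$, distinct monomials in $z$ take values in distinct $|k^\times|$-cosets, the valuation restricted to $k(z)$ is monomial, and $\widehat{k(z)}^+$ has value group $|\calK^\times|$ and residue field $\tilde k$. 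In both cases $\calK^+/\widehat{k(z)}^+$ induces the trivial extension of value groups and a separable extension of residue fields. If $\calK$ is of type $4$, then $\calK^+$ is a nontrivial immediate extension of $k^+$; the same analysis applied to $L_0$ forces $x_0$ to be of type $4$ as well, the point $x$ over $x_0$ then has a neighbourhood isomorphic to an open disc, and a coordinate $z$ on that disc identifies $\calK=\mathcal H(x)$ with $\widehat{k(z)}$, so there is nothing further to do.

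In the type $2$ and type $3$ cases it would remain to upgrade this matching of invariants to the statement that $\widehat{k(z)}^+\subseteq\calK^+$ is module finite and unramified --- hence étale, flatness being automatic by Lemma~\ref{lemma: val rings flat} --- equivalently that the finite extension $\calK/\widehat{k(z)}$ is defectless. This is the hard part, and it genuinely fails for a poorly chosen coordinate: for instance $t\mapsto t^p$ presents $\widehat{k(t)}_{\mathrm{Gauss}}$ as a degree $p$ extension of $\widehat{k(t^p)}_{\mathrm{Gauss}}$ with inseparable residue extension, hence ramified. The substance of the theorem is that over an algebraically closed $k$ the coordinate can always be chosen with vanishing defect, and I would extract this from Temkin's computation of the different along the skeleton of a semistable model of $C$, reaching the type $4$ case by a limiting argument over nested discs. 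Controlling the defect --- and, in the type $4$ case, making the passage to the limit work --- is the step I expect to dominate the proof; the rest is routine valuation theory.
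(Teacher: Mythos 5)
The paper does not prove this theorem; its ``proof'' is the single line ``See [Temkin 2010, Theorem 6.3.1],'' so there is no argument in the paper against which to compare your sketch. Your outline is a plausible high-level reconstruction of what Temkin's argument looks like: realize $\calK$ as a completed residue field $\mathcal H(x)$ of a point $x$ on a smooth projective $k$-curve (using a separating transcendence basis and Krasner's lemma to arrange separability), split into cases by the type of $x$, manufacture a candidate coordinate $z$ whose Gauss or monomial valuation matches the value group and residue field of $\calK$, and then prove defectlessness of $\calK/\widehat{k(z)}$ so that finiteness and unramifiedness of $\widehat{k(z)}^{+}\subseteq\calK^{+}$ follow (flatness being automatic from Lemma~\ref{lemma: val rings flat}). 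However, as you say yourself, the entire substance of the theorem --- controlling the defect in types $2$ and $3$, and producing a disc neighbourhood at a type~$4$ point --- is ``extracted from Temkin's computation of the different along the skeleton of a semistable model.'' So your write-up is a more detailed gloss on the same citation rather than an independent proof, which is consistent with how the paper treats the statement, but the hard content is supplied in neither place. One small point worth cleaning up: in the type~$4$ paragraph you write ``there is nothing further to do,'' whereas the closing paragraph concedes that the limiting argument over nested discs is itself nontrivial; the second statement is the accurate one, since producing the open-disc neighbourhood of $x$ is precisely what requires the semistable-model machinery.
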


\begin{proof}
See \cite[Theorem 6.3.1]{Temkin_2010}.
\end{proof}

\subsection{Commutative algebra in positive characteristic}

\noindent\textbf{General theory:} We explain two types of $F$-singularities that will be used.

\begin{defi}[Split homomorphism, finite homomorphism, $F$-split, $F$-finite]
	Let $f: A \to B$ be a ring homomorphism.
	\begin{enumerate}[label=(\roman*)]
		\item The homomorphism $f$ is called \textit{split} if there exists a left inverse of $f$ as an $A$-module homomorphism.
		\item The homomorphism $f$ is called \textit{finite} if $B$ is a finitely generated $A$-module.
	\end{enumerate}
	Furthermore, for a ring $R$ of characteristic $p$, we say that $R$ is \textit{$F$-split} (resp. \textit{$F$-finite}) if the Frobenius endomorphism $F_R$ of $R$ is split (resp. finite).
\end{defi}

We will also make use of Frobenius liftings of rings of positive characteristic and related constructions.

Let \( R \) be a ring of characteristic \( p \). For any integer \( n \geq 0 \), the differential
\[
d \colon \Omega^n_{R/\mathbb{F}_p} \to \Omega^{n+1}_{R/\mathbb{F}_p}
\]
satisfies \( \dif(a^p \omega) = a^p \dif(\omega) \). In other words, \( \dif \) is an \( R \)-linear map from \( F_* \Omega^n_{R/\mathbb{F}_p} \) to \( F_* \Omega^{n+1}_{R/\mathbb{F}_p} \). Hence, the de Rham complex
\[
0 \to F_* \Omega^0_{R/\dF_p} \to F_* \Omega^1_{R/\dF_p} \to F_* \Omega^2_{R/\dF_{p}} \to \dots
\]
is a complex of \( R \)-modules, and the de Rham cohomology \( H^n_{\mathrm{dR}}(R) \) carries a canonical \( R \)-module structure. The following definition is well-known (see, for example, \cite[(7.2)]{Katz70}).

\begin{defi}\label{defi: inv cartier}
    Let $\Omega_R^n = \Omega_{R/\dF_p}^n$ for any $n\ge 0$.
\begin{enumerate}[label=(\roman*)]
    \item The \emph{inverse Cartier homomorphism}
    \[
    \begin{aligned}
    C_R^{-1}: \Omega_R & \longrightarrow \hol_{\mathrm{dR}}^1(R) \\
    \text{s.t.} \quad C_R^{-1}(f \cdot dg) &:= f^p \cdot g^{p-1} d(g).
    \end{aligned}
    \]
    By taking wedge products, we define the \emph{$n$-th inverse Cartier homomorphism}
    \[C_{R,n}^{-1}:\Omega_R^n\to \hol^n_{\mathrm{dR}}(R)\]

    \item A Cartier lifting is a diagram of $R$-modules:
    \[
    \begin{tikzcd}[row sep=large, column sep=large]
    \Omega_R^1 \arrow[r] \arrow[d, "C^{-1}"'] & 
    (F_*\Omega_R^1)^{d=0} \arrow[ld, "\text{proj}"] \\
    \hol_{\mathrm{dR}}^1(R) &
    \end{tikzcd};
    \]
    \item $R$ is called Cartier liftable if a Cartier lifting exists.
\end{enumerate}

\end{defi}

\begin{defi}
    Keep the notation in Definition \ref{defi: inv cartier}. 
    \begin{enumerate}[label=(\roman*)]
        \item An $F$-lifting of $R$ is a $\dZ/{p^2}$-flat lifting $\widetilde{R}$ of $R$ which is endowed with a Frobenius lifting;
        \item $R$ is called $F$-liftable if an $F$-lifting exists.
    \end{enumerate}
\end{defi}

\begin{lemma}\label{lemma: F lifting-W_2 split-cartier lift}
    If $R$ is $F$-liftable, then $R$ is Cartier liftable.
\end{lemma}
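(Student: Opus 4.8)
The plan is to manufacture a Cartier lifting directly from an $F$-lifting, by means of the classical ``divided Frobenius'' construction of Deligne--Illusie. So fix an $F$-lifting $(\widetilde R,\widetilde F)$: thus $\widetilde R$ is flat over $\dZ/p^2$ with $\widetilde R/p\widetilde R=R$, and $\widetilde F\colon\widetilde R\to\widetilde R$ is a ring homomorphism reducing mod $p$ to the Frobenius of $R$. Since $p^2=0$ in $\widetilde R$ and $\widetilde R$ is $\dZ/p^2$-flat, multiplication by $p$ is an isomorphism $\widetilde R/p\widetilde R\xrightarrow{\,\sim\,}p\widetilde R$. For $x\in R$ I would pick a lift $\widetilde x\in\widetilde R$; because $\widetilde F$ lifts Frobenius, $\widetilde F(\widetilde x)-\widetilde x^{p}\in p\widetilde R$, so I can write $\widetilde F(\widetilde x)-\widetilde x^{p}=p\,\sigma_x$ with $\sigma_x\in\widetilde R$ determined modulo $p\widetilde R$, and set $\delta_x:=\sigma_x\bmod p\in R$. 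Replacing $\widetilde x$ by $\widetilde x+pt$ leaves $\widetilde x^{p}$ unchanged (as $p^2=0$) and changes $\widetilde F(\widetilde x)$ by $p\,\widetilde F(t)$, so $\delta_x$ is well defined up to adding an element of the form $\bar t^{\,p}$.

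Next I would define
\[
\rho\colon\Omega_R^1\longrightarrow\Omega_R^1,\qquad
\rho(a\,\dif b):=a^{p}\bigl(b^{p-1}\,\dif b+\dif\delta_b\bigr),
\]
the expression $b^{p-1}\,\dif b+\dif\delta_b$ being a stand-in for $\tfrac{1}{p}\,\dif\widetilde F(\widetilde b)$ reduced mod $p$. The summand $\dif\delta_b$ is independent of the chosen lift, since two values of $\delta_b$ differ by a $p$-th power and $\dif$ annihilates $p$-th powers in characteristic $p$. The real work is to show that this prescription respects the relations presenting $\Omega_R^1$. Choosing compatible lifts --- taking $\widetilde x+\widetilde y$ and $\widetilde x\widetilde y$ as the lifts of $x+y$ and $xy$ --- a direct computation from the equation defining $\sigma$ yields, in $R$,
\[
\delta_{xy}=x^{p}\delta_y+y^{p}\delta_x,\qquad
\delta_{x+y}=\delta_x+\delta_y-\sum_{i=1}^{p-1}\tfrac{1}{p}\binom{p}{i}x^{i}y^{p-i}.
\]
Substituting these into the formula for $\rho$, the additivity $\rho(\dif x+\dif y)=\rho(\dif x)+\rho(\dif y)$ and the Leibniz rule $\rho(\dif(xy))=x^{p}\rho(\dif y)+y^{p}\rho(\dif x)$ both reduce to the elementary polynomial identity
\[
(x+y)^{p-1}(\dif x+\dif y)-x^{p-1}\,\dif x-y^{p-1}\,\dif y
=\dif\Bigl(\sum_{i=1}^{p-1}\tfrac{1}{p}\binom{p}{i}x^{i}y^{p-i}\Bigr),
\]
which follows by expanding the right-hand side and using $\tfrac{1}{p}\binom{p}{i}\,i=\binom{p-1}{i-1}$ and $\tfrac{1}{p}\binom{p}{i}(p-i)=\binom{p-1}{i}$. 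This makes $\rho$ a well-defined $R$-linear map $\Omega_R^1\to F_*\Omega_R^1$ (the $R$-linearity being exactly the relation $\rho(a\,\omega)=a^{p}\rho(\omega)$).

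The remaining points are then formal. First, $\rho$ takes values in closed forms: for all $a,b\in R$,
\[
\dif\bigl(\rho(a\,\dif b)\bigr)
=\dif(a^{p})\wedge\bigl(b^{p-1}\,\dif b+\dif\delta_b\bigr)
+a^{p}\,\dif\bigl(b^{p-1}\,\dif b+\dif\delta_b\bigr)=0,
\]
using $\dif(a^{p})=0$, $\dif b\wedge\dif b=0$ and $\dif^{2}=0$; hence $\rho$ factors through $(F_*\Omega_R^1)^{d=0}$. Second, $\rho$ lifts the inverse Cartier map: $\dif\delta_b$ is exact and $a^{p}\,\dif\delta_b=\dif(a^{p}\delta_b)$ is exact as well (again because $\dif(a^{p})=0$), so in $\hol^1_{\mathrm{dR}}(R)$ one has $[\rho(a\,\dif b)]=[a^{p}b^{p-1}\,\dif b]=C_R^{-1}(a\,\dif b)$. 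Therefore $\rho$ followed by the projection $(F_*\Omega_R^1)^{d=0}\to\hol^1_{\mathrm{dR}}(R)$ is $C_R^{-1}$, i.e.\ $\rho$ is a Cartier lifting, and $R$ is Cartier liftable.

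I expect the genuine obstacle to be the well-definedness of $\rho$ claimed in the second paragraph --- checking that the divided Frobenius descends through the defining relations of the module of K\"ahler differentials, which is precisely the Deligne--Illusie computation packaged in the two displayed identities. The reason one cannot simply write ``$\tfrac{1}{p}\,\dif\widetilde F$'' as an operator on differential forms is that for a general lift $\widetilde R$ the $\widetilde R$-module $\Omega^1_{\widetilde R/\dZ}$ need not be $p$-torsion-free, so dividing a $p$-multiple by $p$ is not canonical; this is why I would work throughout with the explicit expression $b^{p-1}\,\dif b+\dif\delta_b$, in which every ambiguity is controlled modulo $p$-th powers. Once this is in place, the verifications that $\rho$ lands in closed forms and lifts $C_R^{-1}$ are immediate as above, with no further hypotheses on $R$.
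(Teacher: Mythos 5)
Your proposal is correct and follows the same route as the paper: both construct the divided Frobenius $b\mapsto b^{p-1}\dif b+\dif\bigl(\tfrac{\widetilde F(\widetilde b)-\widetilde b^{p}}{p}\bigr)$ and reduce well-definedness to the identical binomial identity $(x+y)^{p-1}(\dif x+\dif y)-x^{p-1}\dif x-y^{p-1}\dif y=\dif\bigl(\sum_{i=1}^{p-1}\tfrac{1}{p}\binom{p}{i}x^{i}y^{p-i}\bigr)$. The only difference is presentational: the paper packages this as verifying that the assignment is a Frobenius-semilinear derivation $\widetilde R\to F_*\Omega_R$ and stops there, whereas you also spell out the remaining routine checks that the induced map on $\Omega_R^1$ lands in closed forms and recovers $C_R^{-1}$ after projection.
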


\begin{proof}

Let \( (\widetilde{R}, \varphi) \) be an \( F \)-lifting of \( R \). We define a map 
    \[
        \partial \colon x \in \widetilde{R} \mapsto x^{p-1}\dif x + \dif\left( \frac{\varphi(x) - x^{p}}{p} \right) \in \Omega_{R}.
    \]
    We first verify that \( \partial \) is a derivation from \( \widetilde{R} \) to \( F_{*} \Omega_{R} \). This amounts to checking the following two properties for all \( x, y \in \widetilde{R} \):
    \begin{enumerate}
        \item[(a)] \( \partial(x + y) = \partial(x) + \partial(y) \);
        \item[(b)] \( \partial(xy) = x^{p} \partial(y) + y^{p} \partial(x) \).
    \end{enumerate}

    To prove (a), we compute:
    \[
        \partial(x + y) = (x + y)^{p-1} (\dif x + \dif y) + \dif\left( \frac{\varphi(x) + \varphi(y) - (x + y)^{p}}{p} \right).
    \]
    Observe that
    \[
        \dif\left( \frac{\varphi(x) + \varphi(y) - (x + y)^{p}}{p} \right) = \dif\left( \frac{\varphi(x) + \varphi(y) - x^{p} - y^{p}}{p} \right) - \dif\left( \sum_{j=1}^{p-1} p^{-1} \binom{p}{j} x^{j} y^{p-j} \right).
    \]
    Furthermore, we have
    \[
        \begin{aligned}
            \dif\left( \sum_{j=1}^{p-1} p^{-1} \binom{p}{j} x^{j} y^{p-j} \right) &= \sum_{j=1}^{p-1} p^{-1} \binom{p}{j} \left( j x^{j-1} y^{p-j} \dif x + (p - j) x^{j} y^{p-j-1} \dif y \right) \\
            &= \sum_{j=0}^{p-2} \binom{p-1}{j} x^{j} y^{p-1-j} \dif x + \sum_{j=1}^{p-1} \binom{p-1}{j} x^{j} y^{p-1-j} \dif y \\
            &= \sum_{j=0}^{p-1} \binom{p-1}{j} x^{j} y^{p-1-j} (\dif x + \dif y) - x^{p-1} \dif x - y^{p-1} \dif y \\
            &= (x + y)^{p-1} (\dif x + \dif y) - x^{p-1} \dif x - y^{p-1} \dif y.
        \end{aligned}
    \]
    Using this identity, part (a) follows directly.

    To prove (b), define the map \( \delta \colon \widetilde{R} \to R \) by \( \delta(x) = \frac{\varphi(x) - x^{p}}{p} \). Then we have
    \[
        \delta(xy) = x^{p} \delta(y) + y^{p} \delta(x),
    \]
    which implies
    \[
        \dif(\delta(xy)) = x^{p} \dif(\delta(y)) + y^{p} \dif(\delta(x)).
    \]
    This establishes (b) and completes the proof that \( \partial \) is a derivation.
\end{proof}

\begin{rmk}
    In the proof above, when \( R \) is \emph{cotangent-smooth} (that is, when the cotangent complex \( \mathbb{L}_{R/\mathbb{F}_p} \) is a flat module), the operator \( \partial \) coincides with \( \frac{d\varphi}{p} \).
\end{rmk}

\noindent\textbf{Specialize to valuation rings in positive characteristic:} All concepts introduced above will be applied exclusively when \( R \) is a valuation ring. Moreover, we will generally work in the complete setting.

From now on, let \( \mathcal{O} \) be a Tate valuation ring over \( \mathbb{F}_p \), and let \( \varpi \in \mathcal{O} \) be a pseudo-uniformizer. We recall the following fundamental result:
\begin{theo}[Gabber]\label{theo:flat cotangent cpx of val ring in char p and cartier smooth}
    The cotangent complex \( \mathbb{L}_{\mathcal{O}/\mathbb{F}_p} \) is isomorphic to a flat \( \mathcal{O} \)-module concentrated in degree~0. Moreover, the inverse Cartier homomorphism
\[
C^{-1}_{\mathcal{O}, n}:\Omega^n_{\calO/\dF_p}\to \hol^n_{\mathrm{dR}}(\calO)
\]
is an isomorphism for all \( n \).
\end{theo}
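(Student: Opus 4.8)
The plan is to isolate the nontrivial content, the Cartier statement; the claim about $\mathbb{L}_{\calO/\dF_p}$ is general valuation-ring theory. Since $\dF_p$ (with the trivial valuation) is a valuation subring of $\calO$, the structure theory of cotangent complexes of valuation-ring extensions gives $\mathbb{L}_{\calO/\dF_p}\simeq\Omega_{\calO/\dF_p}[0]$ (\cite[Theorem 6.5.8]{Gabber_2003}), and $\Omega_{\calO/\dF_p}$ is torsion-free over $\calO$ — as always for the differentials of a valuation ring over a field — hence flat by Lemma \ref{lemma: val rings flat}; consequently the derived exterior powers are the naive ones, $\wedge^i\mathbb{L}_{\calO/\dF_p}\simeq\Omega^i_{\calO/\dF_p}[0]$ for all $i$.

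For the Cartier isomorphism I would first reduce to $\calO$ being the valuation ring of a finitely generated field over $\dF_p$. Writing $\calK=\mathrm{Frac}(\calO)$ as the filtered union of its finitely generated subfields $\calK_\lambda$ and setting $\calO_\lambda=\calO\cap\calK_\lambda$, one has $\calO=\colim_\lambda\calO_\lambda$ with each $\calO_\lambda$ a valuation ring and each transition map an extension of valuation rings, hence faithfully flat (Lemma \ref{lemma: val rings flat}). Since $\Omega^n_{(-)/\dF_p}$, the complex $F_*\Omega^\bullet_{(-)/\dF_p}$, its cohomology $\hol^\bullet_{\mathrm{dR}}(-)$, and the maps $C^{-1}_{(-),n}$ all commute with such filtered colimits, it is enough to treat each $\calO_\lambda$; from now on $\calO$ is the valuation ring of a finitely generated field over $\dF_p$.

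The two inputs I would feed in are (a) cotangent-smoothness of $\calO$ over $\dF_p$, as above, and (b) faithful flatness of the Frobenius $F_\calO\colon\calO\to\calO$ — because $\calO\subseteq\calO^{1/p}$ is again an extension of valuation rings, so $\calO^{1/p}$ is $\calO$-torsion-free, hence $\calO$-flat (Lemma \ref{lemma: val rings flat}), and faithfully flat since the extension is local. Granting (a) and (b), I would carry out the Cartier argument in its derived-de-Rham incarnation: show that the canonical map $F_*\Omega^\bullet_{\calO/\dF_p}\to\mathrm{dR}_{\calO/\dF_p}$ is an equivalence. The conjugate filtration on $\mathrm{dR}_{\calO/\dF_p}$ has $i$-th graded piece $(\wedge^i\mathbb{L}_{\calO/\dF_p})[-i]\simeq\Omega^i_{\calO/\dF_p}[-i]$ (Frobenius twists over the perfect base $\dF_p$ being trivial), a flat $\calO$-module placed in the single cohomological degree $i$; hence the conjugate spectral sequence degenerates and yields $\hol^n_{\mathrm{dR}}(\calO)\cong\Omega^n_{\calO/\dF_p}$ for all $n$ at once, the isomorphism being $C^{-1}_{\calO,n}$ by the usual compatibility of the inverse Cartier map with the conjugate filtration. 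No separate reduction to $n\le 1$ is needed.

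I expect the crux, and the main obstacle, to be the equivalence $F_*\Omega^\bullet_{\calO/\dF_p}\simeq\mathrm{dR}_{\calO/\dF_p}$ — equivalently, the degeneration of the conjugate spectral sequence, equivalently a direct computation of $\hol^\bullet_{\mathrm{dR}}(\calO)$. This is the point where input (b), and not merely (a), is essential: cotangent-smoothness does not by itself force it (it holds for smooth algebras and for local complete intersections, but a general valuation ring is neither), and the naive route — writing $\calO$ as a filtered colimit of regular local, hence essentially smooth, $\dF_p$-algebras and invoking the classical Cartier theorem — amounts to a strong form of local uniformization of the valuation, which is available in low transcendence degree (in particular in the situation relevant to the remainder of this paper) but is not known in general in characteristic $p$. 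Making the comparison precise without it is the content of Gabber's theorem; a natural approach is to analyze the relative Frobenius $\calO^p\hookrightarrow\calO$, which by (a) and (b) is faithfully flat with cotangent complex $\Omega_{\calO/\dF_p}[0]\oplus M[1]$ for a flat $\calO$-module $M$, mimicking the local structure of the Frobenius of a smooth algebra.
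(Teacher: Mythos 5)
Your handling of the first assertion is correct: Gabber--Ramero's structure theorem (\cite[Theorem 6.5.8]{Gabber_2003}) together with torsion-freeness and Lemma~\ref{lemma: val rings flat} gives $\mathbb{L}_{\calO/\dF_p}\simeq\Omega_{\calO/\dF_p}[0]$ flat, and the reductions you carry out (filtered colimit over finitely generated subfields of $\calK$; faithful flatness of the Frobenius as an extension of valuation rings) are sound.

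But you have not proved the Cartier isomorphism, and to your credit you say so explicitly. The conjugate-filtration argument you sketch is not a proof of it and also cannot be upgraded into one as written. Degenerating the conjugate spectral sequence identifies $\Omega^n_{\calO/\dF_p}$ with the cohomology of the \emph{derived} de~Rham complex $\mathrm{dR}_{\calO/\dF_p}$; the theorem is about the cohomology of the \emph{naive} Kähler de~Rham complex $\Omega^\bullet_{\calO/\dF_p}$, which is the object $\hol^n_{\mathrm{dR}}(\calO)$ defined in the paper. Cotangent-smoothness makes the Hodge-graded pieces of the comparison map between the two agree, but when $\mathbb{L}$ is flat of possibly infinite rank the Hodge filtration on $\mathrm{dR}_{\calO/\dF_p}$ need not be complete, so this does not give the comparison; the conjugate filtration does not help with this either, since it lives entirely on the derived side. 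Your own closing paragraph correctly identifies this comparison (equivalently, the Cartier isomorphism for a non-finitely-presented, merely cotangent-smooth $\dF_p$-algebra) as the genuine content, not deducible from (a) and (b), and correctly notes that the filtered-colimit-from-regular-local-rings route is blocked by the lack of local uniformization in characteristic $p$. That diagnosis is accurate, but it leaves the step unproved.

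For comparison, the paper does not prove this theorem either: it simply cites Gabber's appendix to \cite{Kerz_2021}, where the Cartier isomorphism for arbitrary valuation rings over $\dF_p$ is established by a direct argument not passing through local uniformization or a derived-to-naive de~Rham comparison. So your submission is best read as an accurate dissection of the ingredients and of where the difficulty lies, together with an honest acknowledgment that the core step is a black box; read as a proof, that core step is missing, and the derived-de-Rham detour you interpose does not reduce it to anything easier.
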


\begin{proof}
    See the appendix of \cite{Kerz_2021}.
\end{proof}

Define $\widehat{\Omega}_{\calO}$ as the $\varpi$-adic completion of the module of differentials $\Omega_{\calO/\dF_p}$. By Theorem \ref{theo:flat cotangent cpx of val ring in char p and cartier smooth}, \( \widehat{\Omega}_{\mathcal{O}/\mathbb{F}_p} \) is a \( \varpi \)-completely flat \( \mathcal{O} \)-module, and it agrees with the \( \varpi \)-adic derived completion of \( \mathbb{L}_{\mathcal{O}/\mathbb{F}_p} \). We will also make use of the \emph{complete de Rham cohomology} of \( \mathcal{O} \).
\begin{defi}
    Make the following definitions:
    \begin{enumerate}[label=(\roman*)]
        \item For any \( n \geq 0 \), define \( \widehat{\Omega}^n_{\mathcal{O}} \) as the \( \varpi \)-adic completion of \( \Omega^n_{\mathcal{O}/\mathbb{F}_p} \);
        \item Define \( \widehat{\mathrm{dR}}_{\mathcal{O}} \) as the \( \varpi \)-adic \emph{derived} completion of the de Rham complex \( \mathrm{dR}_{\mathcal{O}/\mathbb{F}_p} \);
        \item For any integer \( n \), define \( \widehat{\hol}^n_{\mathrm{dR}}(\mathcal{O}) \) as the \( n \)-th cohomology group of \( \widehat{\mathrm{dR}}_{\mathcal{O}} \), which carries a natural \( \mathcal{O} \)-module structure.
    \end{enumerate}
\end{defi}

In fact, \( \widehat{\mathrm{dR}}_{\mathcal{O}} \) is quasi-isomorphic to its `naive' completion, obtained by taking the \( \varpi \)-adic completion termwise.

\begin{lemma}\label{lemma: calculate complete de rham cpx}
    The complex \( \widehat{\mathrm{dR}}_{\mathcal{O}} \) is canonically isomorphic to the complex
\[
  \mathrm{dR}_{\calO}^\wedge:=[0 \to F_*\mathcal{O} \to F_*\widehat{\Omega}_{\mathcal{O}} \to F_*\widehat{\Omega}^2_{\mathcal{O}} \to \cdots],
\]
obtained by applying termwise \( \varpi \)-adic completion to the de Rham complex.
\end{lemma}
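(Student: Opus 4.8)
The plan is to compute the derived $\varpi$-adic completion $\widehat{\mathrm{dR}}_{\mathcal{O}}$ of the de Rham complex $\mathrm{dR}_{\mathcal{O}} = \mathrm{dR}_{\mathcal{O}/\mathbb{F}_p} = [0 \to F_*\mathcal{O} \to F_*\Omega^1_{\mathcal{O}/\mathbb{F}_p} \to \cdots]$ directly from the formula
\[
\widehat{C} \;\simeq\; R\lim_m\bigl(C \otimes^{\mathbb{L}}_{\mathcal{O}} \mathcal{O}/\varpi^m\bigr), \qquad C \in D(\mathcal{O}),
\]
which is available here because $\varpi$ is a nonzerodivisor on the domain $\mathcal{O}$, so the Koszul complex $[\mathcal{O} \xrightarrow{\varpi^m} \mathcal{O}]$ computing the derived completion is quasi-isomorphic to $\mathcal{O}/\varpi^m$ (see, e.g., \cite{stacks-project}). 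Two ingredients make this explicit: first, that every term $F_*\Omega^n_{\mathcal{O}/\mathbb{F}_p}$ of the de Rham complex is $\varpi$-torsion free — equivalently, flat over $\mathcal{O}$ by Lemma \ref{lemma: val rings flat} — so that $\mathrm{dR}_{\mathcal{O}} \otimes^{\mathbb{L}}_{\mathcal{O}} \mathcal{O}/\varpi^m$ is simply the termwise quotient $\mathrm{dR}_{\mathcal{O}}/\varpi^m$ (via the finite flat resolution $[\mathcal{O} \xrightarrow{\varpi^m} \mathcal{O}]$ of $\mathcal{O}/\varpi^m$); and second, that the tower $(\mathrm{dR}_{\mathcal{O}}/\varpi^m)_m$ has surjective transition maps in every cohomological degree, so that the homotopy limit collapses to the ordinary, termwise inverse limit.

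For the flatness of the terms I would argue as follows. By Gabber's theorem (Theorem \ref{theo:flat cotangent cpx of val ring in char p and cartier smooth}) the module $\Omega^1_{\mathcal{O}/\mathbb{F}_p}$ is flat over $\mathcal{O}$; by Lazard's theorem it is a filtered colimit of finite free $\mathcal{O}$-modules, hence so is $\Omega^n_{\mathcal{O}/\mathbb{F}_p} = \bigwedge^n_{\mathcal{O}} \Omega^1_{\mathcal{O}/\mathbb{F}_p}$ (exterior powers commute with filtered colimits), and therefore $\Omega^n_{\mathcal{O}/\mathbb{F}_p}$ is $\varpi$-torsion free. Restriction of scalars along the Frobenius $F_{\mathcal{O}}$ preserves $\varpi$-torsion freeness (if $\varpi^p x = 0$, then $x = 0$), so $F_*\Omega^n_{\mathcal{O}/\mathbb{F}_p}$ is $\varpi$-torsion free, hence flat over $\mathcal{O}$ by Lemma \ref{lemma: val rings flat}; the same applies to $F_*\widehat{\Omega}^n_{\mathcal{O}}$ via Corollary \ref{cor: val ring cpt of flat is flat}. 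One also records the routine point that the termwise $\varpi$-adic completion of $F_*\Omega^n_{\mathcal{O}/\mathbb{F}_p}$ equals $F_*\widehat{\Omega}^n_{\mathcal{O}}$, so that the complex $\mathrm{dR}_{\mathcal{O}}^\wedge$ in the statement is genuinely the termwise $\varpi$-adic completion of $\mathrm{dR}_{\mathcal{O}}$.

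Granting this, $\mathrm{dR}_{\mathcal{O}} \otimes^{\mathbb{L}}_{\mathcal{O}} \mathcal{O}/\varpi^m \simeq \mathrm{dR}_{\mathcal{O}}/\varpi^m$, and since the transition maps $\mathrm{dR}_{\mathcal{O}}/\varpi^{m+1} \to \mathrm{dR}_{\mathcal{O}}/\varpi^m$ are surjective in every degree, the map $1 - \mathrm{shift}$ on $\prod_m \mathrm{dR}_{\mathcal{O}}/\varpi^m$ realizing $R\lim_m \mathrm{dR}_{\mathcal{O}}/\varpi^m$ as its fiber is a termwise-surjective map of complexes; its fiber is therefore the honest kernel, namely the termwise inverse limit $\lim_m \mathrm{dR}_{\mathcal{O}}/\varpi^m = \mathrm{dR}_{\mathcal{O}}^\wedge$. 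Chaining the (functorial) identifications yields
\[
\widehat{\mathrm{dR}}_{\mathcal{O}} \;\simeq\; R\lim_m\bigl(\mathrm{dR}_{\mathcal{O}} \otimes^{\mathbb{L}}_{\mathcal{O}} \mathcal{O}/\varpi^m\bigr) \;\simeq\; R\lim_m \bigl(\mathrm{dR}_{\mathcal{O}}/\varpi^m\bigr) \;\simeq\; \lim_m \bigl(\mathrm{dR}_{\mathcal{O}}/\varpi^m\bigr) \;=\; \mathrm{dR}_{\mathcal{O}}^\wedge,
\]
and since each step is canonical, so is the composite.

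The one place that genuinely requires care — and the step I expect to be the main obstacle to write out cleanly — is precisely this interchange of derived $\varpi$-completion with the infinite de Rham complex: derived completion commutes neither with infinite colimits nor with infinite products in general, so "completing termwise" is not automatic. What makes it work is exactly the conjunction of the flatness of the terms (ingredient one, so that $\otimes^{\mathbb{L}}\mathcal{O}/\varpi^m$ is honest) and the Mittag--Leffler property of the towers $(\Omega^n_{\mathcal{O}/\mathbb{F}_p}/\varpi^m)_m$ (ingredient two, so that $R\lim_m$ agrees with the ordinary $\lim_m$). Everything else — Gabber's theorem, Lazard's theorem, and the $R\lim$-description of derived completion — is standard input.
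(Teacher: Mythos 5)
Your proof is correct and follows essentially the same route as the paper's: both arguments reduce to the flatness of each term \( F_*\Omega^n_{\mathcal{O}} \) (so that \( -\otimes^{\mathbb{L}}_{\mathcal{O}}\mathcal{O}/\varpi^m \) is computed termwise) together with the observation that the resulting tower behaves classically. The only organizational difference is at the last step: the paper notes that \( \mathrm{dR}_{\mathcal{O}}^{\wedge} \) is already derived \(\varpi\)-complete (each term being classically complete) and then compares the two complexes modulo \(\varpi^m\), whereas you compute \( R\lim_m(\mathrm{dR}_{\mathcal{O}}/\varpi^m) \) directly and collapse it to the ordinary limit via termwise surjectivity of the transition maps — two standard and interchangeable ways of closing the same argument.
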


\begin{rmk}
    Note that for any $\calO$-module $M$, the Frobenius twist of its $\varpi$-adic completion $F_*\widehat{M}$ is canonically isomorphic to the completion of it Frobenius twist $\widehat{(F_*M)}$.
\end{rmk}

\begin{proof}
    Since each term of ${\mathrm{dR}}^\wedge_{\mathcal{O}} $ is $\varpi$-adically complete, it is derived $\varpi$-adically complete.

    Consider the canonical morphism \( \mathrm{dR}_{\mathcal{O}} \to {\mathrm{dR}}^\wedge_{\mathcal{O}} \). Since each \( \Omega^n_{\mathcal{O}} \) is flat over \( \mathcal{O} \), so is its completion (by Corollary \ref{cor: val ring cpt of flat is flat}) Hence, for any \( n \geq 1 \), we have
\begin{align*}
    \mathcal{O}/\varpi^n \otimes^{\mathbf{L}}_{\mathcal{O}} {\mathrm{dR}}^\wedge_{\mathcal{O}} 
&= \left[ 0 \to F_*(\mathcal{O}/\varpi^{pn}) \to \mathcal{O}/\varpi^n \otimes_{\mathcal{O}} F_*\Omega_{\mathcal{O}} \to \mathcal{O}/\varpi^n \otimes_{\mathcal{O}} F_*\Omega^2_{\mathcal{O}} \to \cdots \right]\\
&=\mathcal{O}/\varpi^n \otimes^{\mathbf{L}}_{\mathcal{O}} {\mathrm{dR}}_{\mathcal{O}}.
\end{align*}

Passing to the derived limit, we obtain the desired isomorphism.
\end{proof}

\begin{lemma}\label{lemma: calculate complete de rham cohomology}
    The complete de Rham cohomology \( \widehat{\hol}_{\mathrm{dR}}^{n}(\mathcal{O}) \) is canonically isomorphic to the classical \( \varpi \)-adic completion of the naive de Rham cohomology \( \hol_{\mathrm{dR}}^{n}(\mathcal{O}) \).
\end{lemma}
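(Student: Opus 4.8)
The plan is to deduce the statement from the flatness over $\calO$ of \emph{both} the terms and the cohomology modules of the de Rham complex, which is exactly what Gabber's theorem provides, and then to run a routine limit computation. Concretely, Theorem~\ref{theo:flat cotangent cpx of val ring in char p and cartier smooth} gives that $\mathbb{L}_{\calO/\dF_p}=\Omega^1_{\calO/\dF_p}$ is a flat $\calO$-module; hence each $\Omega^n_{\calO/\dF_p}=\bigwedge^n_\calO\Omega^1_{\calO/\dF_p}$ is flat (exterior powers of flat modules are flat, a flat module being a filtered colimit of finite free modules and exterior powers commuting with filtered colimits), and hence so is each Frobenius pushforward $F_*\Omega^n_{\calO/\dF_p}$, since $\varpi$-torsion in $F_*M$ coincides with $\varpi^p$-torsion in $M$ and flatness over $\calO$ is equivalent to $\varpi$-torsion-freeness by Lemma~\ref{lemma: val rings flat}. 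The same theorem says the inverse Cartier map $C^{-1}_{\calO,n}\colon\Omega^n_{\calO/\dF_p}\to\hol^n_{\mathrm{dR}}(\calO)$ is an isomorphism of $\calO$-modules (it is $\calO$-linear for the canonical structure on the target, where $a$ acts on a class by multiplying the underlying form by $a^p$), so each $\hol^n_{\mathrm{dR}}(\calO)$ is flat over $\calO$ as well; in particular all of these modules are $\varpi$-torsion-free.

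Granting this, I would first compute the reduction of the de Rham complex modulo $\varpi^m$. Write $Z^n$ and $B^n$ for the cocycles and coboundaries of $\mathrm{dR}_{\calO/\dF_p}=[0\to F_*\calO\to F_*\Omega^1_{\calO/\dF_p}\to\cdots]$. These are submodules of flat $\calO$-modules, hence $\varpi$-torsion-free, hence flat; moreover $F_*\Omega^n_{\calO/\dF_p}/B^n$ is an extension of the flat modules $\hol^n_{\mathrm{dR}}(\calO)$ and $B^{n+1}$, hence $\varpi$-torsion-free. Since the terms of $\mathrm{dR}_{\calO/\dF_p}$ are flat, $\mathrm{dR}_{\calO/\dF_p}\otimes^{\mathbf{L}}_\calO\calO/\varpi^m\simeq\mathrm{dR}_{\calO/\dF_p}\otimes_\calO\calO/\varpi^m$, and because all the modules just listed are flat, the short exact sequences $0\to B^n\to Z^n\to\hol^n_{\mathrm{dR}}(\calO)\to 0$, $\ 0\to Z^n\to F_*\Omega^n_{\calO/\dF_p}\to B^{n+1}\to 0$, and $0\to B^n\to F_*\Omega^n_{\calO/\dF_p}\to F_*\Omega^n_{\calO/\dF_p}/B^n\to 0$ all remain exact after $\otimes_\calO\calO/\varpi^m$. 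A short diagram chase with these sequences then identifies $\hol^n\!\bigl(\mathrm{dR}_{\calO/\dF_p}\otimes_\calO\calO/\varpi^m\bigr)$ with $\hol^n_{\mathrm{dR}}(\calO)/\varpi^m$, naturally in $m$, the transition maps on the right being the evident surjections.

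Finally I would pass to the limit. By definition of the derived $\varpi$-adic completion and flatness of the terms,
\[
\widehat{\mathrm{dR}}_\calO\ \simeq\ R\varprojlim_m\bigl(\mathrm{dR}_{\calO/\dF_p}\otimes^{\mathbf{L}}_\calO\calO/\varpi^m\bigr)\ \simeq\ R\varprojlim_m\bigl(\mathrm{dR}_{\calO/\dF_p}\otimes_\calO\calO/\varpi^m\bigr),
\]
so the Milnor exact sequence for the cohomology of the homotopy limit of a tower of complexes yields
\[
0\to\clims{1}{m}\bigl(\hol^{n-1}_{\mathrm{dR}}(\calO)/\varpi^m\bigr)\to\widehat{\hol}^n_{\mathrm{dR}}(\calO)\to\varprojlim_m\bigl(\hol^n_{\mathrm{dR}}(\calO)/\varpi^m\bigr)\to 0 .
\]
The $\clims{1}{m}$-term vanishes because the tower $\{\hol^{n-1}_{\mathrm{dR}}(\calO)/\varpi^m\}_m$ has surjective transition maps, giving the desired canonical isomorphism between $\widehat{\hol}^n_{\mathrm{dR}}(\calO)$ and the classical $\varpi$-adic completion $\varprojlim_m\hol^n_{\mathrm{dR}}(\calO)/\varpi^m$ of $\hol^n_{\mathrm{dR}}(\calO)$. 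Alternatively, one may start from the naive termwise completion $\mathrm{dR}^\wedge_\calO$ of Lemma~\ref{lemma: calculate complete de rham cpx} and carry out the same bookkeeping with the classical completions $\widehat{Z^n},\widehat{B^n}$ in place of the reductions mod $\varpi^m$, using that a short exact sequence with $\varpi$-torsion-free cokernel stays exact after classical $\varpi$-adic completion.

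I do not expect a genuine obstacle here: the argument is formal once the cohomology modules are known to be flat. The two points that require care are the correct use of Gabber's theorem in the first step — the Cartier isomorphism, and hence the flatness of $\hol^n_{\mathrm{dR}}(\calO)$, genuinely fails without Cartier-smoothness of $\calO$ — and the verification in the second step that no $\Tor^\calO_1$ obstruction appears when the three short exact sequences are reduced mod $\varpi^m$, which again comes down precisely to the flatness of $\hol^n_{\mathrm{dR}}(\calO)$ and of the coboundary modules $B^n$.
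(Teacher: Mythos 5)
Your proof is correct and tracks the paper's argument closely: both proofs extract the key flatness facts from Gabber's theorem --- flatness of the $\Omega^n$, hence of the cocycle and coboundary submodules $Z^n$, $B^n$, and, via the Cartier isomorphism, of the cohomology $\hol^n_{\mathrm{dR}}(\calO)$ --- and then use the short exact sequences $0\to B^n\to Z^n\to\hol^n_{\mathrm{dR}}(\calO)\to 0$ and $0\to Z^n\to F_*\Omega^n_\calO\to B^{n+1}\to 0$ to push a completion operation through the complex. The only genuine difference is the limit bookkeeping: you reduce mod $\varpi^m$ first (using that flatness of the quotients kills $\Tor_1$), identify $\hol^n$ of the reduced complex with $\hol^n_{\mathrm{dR}}(\calO)/\varpi^m$, and then invoke the Milnor $\clim{1}$ sequence (with the $\clim{1}$ term vanishing by surjectivity of transitions). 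The paper instead classically $\varpi$-adically completes the two short exact sequences directly --- exactness being preserved because the quotients are $\varpi$-torsion-free --- and then appeals to Lemma~\ref{lemma: calculate complete de rham cpx} to identify $\widehat{\mathrm{dR}}_\calO$ with the termwise completion $\mathrm{dR}^\wedge_\calO$, reading off the cohomology of the latter from the completed sequences. You flag this exact route as your ``alternative'' at the end, so the two approaches coincide in substance; neither buys anything over the other once the flatness input is in place. One small remark: the paper's displayed sequence~(\ref{eq: exact in complete dr 1}) has $Z_n$ and $B_n$ transposed; your version $0\to B^n\to Z^n\to\hol^n_{\mathrm{dR}}(\calO)\to 0$ is the intended one.
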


\begin{proof}
    For any integer \( n \geq 0 \), define \( B_{n} \) as the image of the differential \( \dif \colon F_{*}\Omega_{\mathcal{O}}^{n-1} \to F_{*}\Omega_{\mathcal{O}}^{n} \) (with the convention that \( \Omega_{\mathcal{O}}^{-1} = 0 \)), and let \( Z_{n} \) be the kernel of the differential \( \dif \colon F_{*}\Omega_{\mathcal{O}}^{n} \to F_{*}\Omega_{\mathcal{O}}^{n+1} \). By definition, we have exact sequences:
\begin{align}
0 \to Z_{n} \to B_{n} \to \hol_{\mathrm{dR}}^{n}(\mathcal{O}) \to 0, \label{eq: exact in complete dr 1} \\
0 \to B_{n} \to F_{*}\Omega_{\mathcal{O}}^{n} \to Z_{n+1} \to 0. \label{eq: exact in complete dr 2}
\end{align}

Since \( B_{n} \) and \( Z_{n} \) are submodules of \( F_{*}\Omega_{\mathcal{O}}^{n} \), they are \( \varpi \)-torsion free and hence flat by Lemma \ref{lemma: val rings flat}. By Theorem \ref{theo:flat cotangent cpx of val ring in char p and cartier smooth}, the inverse Cartier homomorphism is an isomorphism; in particular, \( H_{\mathrm{dR}}^{n}(\mathcal{O}) \) is flat for all \( n \). These flatness conditions imply that the exact sequences (\ref{eq: exact in complete dr 1}) and (\ref{eq: exact in complete dr 2}) remain exact after \( \varpi \)-adic completion. Consequently, the \( \varpi \)-adic completion of \( H_{\mathrm{dR}}^{n}(\mathcal{O}) \) equals the \( n \)-th cohomology of the complex
\[
0 \to F_{*}\mathcal{O} \to F_{*}\widehat{\Omega}_{\mathcal{O}} \to F_{*}\widehat{\Omega}_{\mathcal{O}}^{2} \to \cdots.
\]
The result now follows from Lemma \ref{lemma: calculate complete de rham cpx}.
\end{proof}

\subsection{$\delta$-liftings}
In this article, we will freely use the theory of prisms introduced in \cite{Bhatt_2022}.

\begin{defi}
    Let $(R,I)$ be a bounded prism in the sense of \cite[Definition 3.2]{Bhatt_2022}, and set $\overline R=R/I$. For an $\overline{R}$-algebra $S$, a \emph{$\delta$-lifting of $S$ over $R$} is a prism $\widetilde{S}$ over $R$ equipped with an isomorphism \( \widetilde{S}/I\widetilde{S} \cong S \). Such a \(\delta\)-lifting \( \widetilde{S} \) is called \emph{flat} if \( \widetilde{S} \) is \( (p, I) \)-completely flat over \( R \).
\end{defi}

\subsection{Berkovich $\dA^1$}

Let \( k \) be an algebraically closed complete non-Archimedean field. The points of the Berkovich analytification of \( \mathbb{A}^1_k \) (denoted \( \mathbb{A}^{1,\mathrm{an}}_k \)) are, by definition, the set of all seminorms on \( k[t] \) that extend the given norm on \( k \). Let \( |\bigcdot| \) denote the non-Archimedean norm on \( k \).

\begin{theo}
    Every point of \( \mathbb{A}^{1,\mathrm{an}}_k \) belongs to exactly one of the following four types:
    \begin{enumerate}[label=(\roman*)]
        \item \textbf{Type 1 points:} For a \( k \)-algebra homomorphism \( m \colon k[t] \to k \), the seminorm \( f \mapsto |m(f)| \).
        \item \textbf{Type 2 points:} For \( r \in |k^\times| \) and \( x \in k \), the Gauss norm with center \( x \) and radius \( r \).
        \item \textbf{Type 3 points:} For \( r \notin |k^\times| \) and \( x \in k \), the Gauss norm with center \( x \) and radius \( r \).
        \item \textbf{Type 4 points:} For a nested sequence of closed balls \( B_1 \supseteq B_2 \supseteq \cdots \) in \( k \) with empty intersection, the seminorm \( f \mapsto \lim_{i} \|f\|_i \), where \( \|\bigcdot\|_i \) denotes the Gauss norm on the ball \( B_i \). \footnote{This limit must exist.}
    \end{enumerate}
\end{theo}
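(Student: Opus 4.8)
The plan is to exploit that $k$ is algebraically closed to reduce a point to a single real-valued function on $k$, and then to read off the type from an associated nested family of closed balls (this is the classical picture due to Berkovich). A point $x\in\mathbb{A}^{1,\mathrm{an}}_k$ is a multiplicative seminorm $|\bigcdot|_x$ on $k[t]$ restricting to $|\bigcdot|$ on $k$; since $k$ is algebraically closed, every $f\in k[t]$ splits as $f=c\prod_i(t-a_i)$, whence $|f|_x=|c|\prod_i r_x(a_i)$ with $r_x\colon k\to\dR_{\ge 0}$, $a\mapsto|t-a|_x$. So $x$ is determined by $r_x$. For $a\in k$ set $D_x(a)=B_k(a,r_x(a))$ and $S_x=\bigcap_{a\in k}D_x(a)$. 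The one lemma genuinely needed is the \emph{nested balls lemma}: the family $\{D_x(a):a\in k\}$ is totally ordered by inclusion, with $D_x(a)\subsetneq D_x(b)\Rightarrow r_x(a)<r_x(b)$ and two members of equal radius coinciding. This is a short case split on whether $|a-b|\le r_x(a)$, using only the ultrametric inequality.

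Next I would run the classification. (i) If $r_x(a)=0$ for some $a$, then $t-a\in\ker|\bigcdot|_x$; since $(t-a)$ is maximal and $|\bigcdot|_x$ is a genuine norm on $k$, the seminorm factors through $k[t]/(t-a)=k$, so $x$ is the type 1 point attached to $t\mapsto a$. Otherwise $r_x(a)>0$ for all $a$. (ii) If $S_x\ne\emptyset$, fix $b\in S_x$; then $|b-c|\le r_x(c)$ for every $c$, and a short ultrametric computation gives $|t-c|_x=\max(r_x(b),|b-c|)$ for all $c\in k$, i.e.\ $|\bigcdot|_x$ is the Gauss norm of radius $\rho:=r_x(b)>0$ centred at $b$; it is type 2 if $\rho\in|k^\times|$ and type 3 otherwise. (iii) If $S_x=\emptyset$, put $\rho_0=\inf_a r_x(a)$ and choose $a_n$ with $r_x(a_n)$ decreasing to $\rho_0$; the nested balls lemma then forces $D_x(a_1)\supseteq D_x(a_2)\supseteq\cdots$. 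I would show $\bigcap_n D_x(a_n)=S_x$ (hence empty), and, using this, that the countable subfamily $\{D_x(a_n)\}$ is cofinal in $\{D_x(b):b\in k\}$: if some $D_x(b)$ contained no $D_x(a_n)$, then each $D_x(a_n)$ would strictly contain $D_x(b)$, forcing $b\in\bigcap_n D_x(a_n)=\emptyset$. Applying cofinality at each root $\alpha_j$ of a fixed $f$ then makes $\|f\|_{D_x(a_n)}$ eventually constant equal to $|f|_x$, so $x$ is the type 4 point attached to $B_n:=D_x(a_n)$.

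Finally I would check that each recipe defines a point and that the types are pairwise disjoint. For types 1--3 this is routine (evaluation maps and Gauss norms are multiplicative seminorms extending $|\bigcdot|$); for type 4, for a fixed $f=c\prod_j(t-\alpha_j)$ each root $\alpha_j$ eventually escapes the balls $B_i$ (their intersection is empty), so $\|f\|_{B_i}$ is eventually constant, the limit exists, and a pointwise limit of multiplicative seminorms extending $|\bigcdot|$ is again one. For disjointness: type 1 points are precisely those with $\ker|\bigcdot|_x\ne 0$, hence distinct from the rest; among types 2, 3, 4 the number $\rho_x:=\inf_a r_x(a)$ is intrinsic, equals the radius for a Gauss norm, and lies in $|k^\times|$ for type 2 but not for type 3, separating those two; and for a type 4 point attached to $B_i=B_k(b_i,\rho_i)$ with empty intersection one has $r_x(b_i)\le\rho_i$ for all $i$, so if $x$ were a Gauss norm with some centre $b$, taking the first $N_0$ with $b\notin B_{N_0}$ would give $\rho_x=r_x(b)=|b-b_{N_0}|>\rho_{N_0}\ge r_x(b_{N_0})\ge\rho_x$, which is impossible. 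Hence the four types partition $\mathbb{A}^{1,\mathrm{an}}_k$.

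The step I expect to be the main obstacle is the type 4 case (iii): passing from the possibly uncountable nested family of balls to a countable cofinal one, verifying it still has empty intersection, and confirming that $|\bigcdot|_x$ is recovered as the limit along it, together with the ultrametric bookkeeping that separates type 4 from types 2 and 3. Everything else reduces to the nested balls lemma and mechanical use of the ultrametric inequality.
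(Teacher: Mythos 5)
The paper does not give its own proof of this classification; it simply cites Berkovich's book for it. Your argument is the standard one: reduce a seminorm $|\bigcdot|_x$ to the radius function $r_x(a)=|t-a|_x$ via factorization over the algebraically closed $k$, observe that the balls $D_x(a)=B_k(a,r_x(a))$ are totally ordered (the nested-balls lemma, which you state correctly), and then case-split on whether $\bigcap_a D_x(a)$ is a point, a ball of positive radius, or empty. I checked the places you flagged as delicate: the formula $|t-c|_x=\max\bigl(r_x(b),|b-c|\bigr)$ for $b\in S_x$ does follow by two applications of the ultrametric inequality (using $r_x(b)=\inf_a r_x(a)$), the equality $\bigcap_n D_x(a_n)=S_x$ for a radius-minimizing sequence holds because any $b$ in the left side satisfies $r_x(b)=\inf_a r_x(a)$ and hence lies in every $D_x(c)$, the cofinality argument and the eventual constancy of $\|f\|_{B_n}$ at $|f|_x$ both go through, and the disjointness check separating a type~4 point from a Gauss point is sound. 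So the proposal is correct and complete, coinciding with the argument in the cited reference rather than differing from anything in this paper. One cosmetic remark: the statement as written tacitly assumes $r>0$ in types~2 and~3 (otherwise $r\notin|k^\times|$ would also cover the degenerate radius~$0$ case), and your case split handles this correctly by dispatching $r_x(a)=0$ as type~1 first.
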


\begin{proof}
    See \cite[1.4.3, 1.4.4, 4.1]{Berkovich_2012}
\end{proof}

Note that a point \( v \in \mathbb{A}^{1,\mathrm{an}}_k \) has zero support unless it is of type 1. Furthermore, the value group and residue field can be explicitly described for each type of points.

\begin{theo}\label{theo: type coincide}
    Let $v$ be a point in $\dA^{1,\an}_k$ that is not of type 1.
    \begin{enumerate}[label=(\roman*)]
    \item If \( v \) is of type~2, then its residue field is a proper extension of the residue field of \( k \).
    \item If \( v \) is of type~3, then its value group is a proper extension of the value group of \( k \).
    \item If \( v \) is of type~4, then at least one of the above two properties holds.
\end{enumerate}
\end{theo}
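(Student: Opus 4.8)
The plan is to reduce all three parts to explicit computations with the seminorm carried by $v$. Since $v$ is not of type~$1$, its support is the zero ideal, so $v$ is a rank-one valuation on $k(t)$; because passing to the completion changes neither the value group nor the residue field, it suffices to compute the value group and residue field of $(k(t),v)$ and compare them with $|k^\times|$ and the residue field of $k$. I will use that $k$ is algebraically closed twice over: every nonzero $f\in k[t]$ splits as $c\prod_i(t-\alpha_i)$, so $\|f\|_v$ equals $|c|$ times a product of numbers of the form $\|t-\alpha\|_v$ with $\alpha\in k$; and the residue field of $k$ is algebraically closed while $|k^\times|$ is divisible, so a proper residue extension is necessarily transcendental and the quotient of value groups is torsion free.

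For (i) I would first normalize: after the substitution $t\mapsto\lambda^{-1}(t-x)$ with $|\lambda|=r\in|k^\times|$, the point $v$ becomes the Gauss norm of radius $1$ centred at $0$, so $\|\sum_j c_j t^j\|_v=\max_j|c_j|$. Then $t$ lies in the valuation ring, and its residue $\bar t$ is transcendental over the residue field of $k$: a relation $\sum_j \bar a_j\,\bar t^{\,j}=0$ with the $a_j\in k$ of norm $\le 1$ and not all of norm $<1$ would force $\|\sum_j a_j t^j\|_v<1$, contradicting $\|\sum_j a_j t^j\|_v=\max_j|a_j|=1$. Hence the residue field of $v$ properly contains that of $k$. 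For (ii), after translating so the centre is $0$, the point $v$ is the Gauss norm of radius $r\notin|k^\times|$, and then $\|t\|_v=r$ exhibits an element of the value group of $v$ outside $|k^\times|$.

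For (iii) I would show that a type-$4$ point falls in neither of the previous situations, i.e.\ that the extension $\mathcal{H}(v)/k$ is immediate, so that, via the Abhyankar inequality recalled in the introduction, $v$ corresponds exactly to a type-$4$ field in the sense of Definition~\ref{defi: Berkovich classify 1 dim geo rings}. Write $v$ through a strictly decreasing chain of closed balls $B_i=B_k(a_i,r_i)$ with $\bigcap_i B_i=\emptyset$, so that $\|f\|_v=\lim_i\|f\|_{B_i}$ with the Gauss norms $\|\bigcdot\|_{B_i}$ nonincreasing in $i$; thus $r:=\lim_i r_i=\inf_i r_i$. The first step is to note $r>0$: if $r=0$ then $(a_i)$ is Cauchy in the complete field $k$, and its limit $a$ satisfies $|a-a_i|=\lim_{j\ge i}|a_j-a_i|\le r_i$ for every $i$, hence $a\in\bigcap_i B_i$, contradicting emptiness. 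Next, fix $\alpha\in k$ and let $N$ be the least index with $\alpha\notin B_N$; for $j\ge N$ the centre $a_j$ still lies in $B_N$ while $\alpha$ does not, so $|a_j-\alpha|=|a_N-\alpha|>r_N\ge r_j$, whence $\|t-\alpha\|_{B_j}=\max(|a_j-\alpha|,r_j)=|a_N-\alpha|$, while $\|t-\alpha\|_{B_j}=r_j$ for $j<N$. Therefore $\|t-\alpha\|_v=|a_N-\alpha|\in|k^\times|$ for every $\alpha$, which forces $\|f\|_v\in|k^\times|$ for all $f\in k(t)^\times$, so the value group of $v$ equals $|k^\times|$. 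For the residue field, with $u:=a_N-\alpha$ one has $(t-\alpha)/u=1+(t-a_N)/u$, and $\|t-a_N\|_v\le r_N<|a_N-\alpha|$ gives $\|(t-a_N)/u\|_v<1$, so $(t-\alpha)/u$ reduces to $1$; writing any $f$ with $\|f\|_v=1$ as $c\prod_i(t-\alpha_i)/\prod_j(t-\beta_j)$ and applying this to each linear factor shows $\bar f$ is the residue of a norm-one element of $k$, hence lies in the residue field of $k$. Thus $\mathcal{H}(v)/k$ is immediate.

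I expect the real difficulty to sit entirely in (iii); parts (i) and (ii) are the Gauss norm unwound. In (iii) the two delicate points are establishing that the limiting radius $r$ is strictly positive — this is precisely where completeness of $k$ enters, and it is the reason type-$4$ points exist only over non-spherically-complete fields — and, for each $\alpha$, isolating the correct ``last'' ball $B_N$ not containing $\alpha$ so that the ultrametric identity $|a_j-\alpha|=|a_N-\alpha|$ both pins $\|t-\alpha\|_v$ to $|k^\times|$ and places $t-a_N$ inside the maximal ideal. Once that single estimate is in hand, both the value-group and the residue-field assertions of (iii) drop out of it.
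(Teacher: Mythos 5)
Your argument is correct, and it is necessarily a different route from the paper's, since the paper offers no proof at all beyond the citation to \cite[1.4.4]{Berkovich_2012}. Parts (i) and (ii) are the standard Gauss-norm computations (normalize, observe that $\bar t$ is transcendental over the residue field of $k$, resp.\ that $\|t\|_v=r\notin|k^\times|$), and they are fine. Your part (iii) is also the classical argument: completeness of $k$ forces $\lim_i r_i>0$ when $\bigcap_i B_i=\emptyset$; for each $\alpha\in k$ the ``last'' ball $B_N$ not containing $\alpha$ gives $\|t-\alpha\|_v=|a_N-\alpha|\in|k^\times|$ and $(t-\alpha)/(a_N-\alpha)\equiv 1$ in the residue field; and algebraic closedness of $k$ lets you propagate both conclusions from linear factors to all of $k(t)$ by multiplicativity. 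Passing to the completion changes neither invariant, so the extension is immediate. All the steps check out.

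The one point you must be explicit about: what you prove in (iii) is that \emph{neither} of the two properties holds for a type 4 point, whereas the theorem as printed asserts that \emph{at least one} of them holds --- the exact negation. The printed version is evidently a typo: Definition \ref{defi: Berkovich classify 1 dim geo rings} defines type 4 by ``none of the above conditions hold,'' the sentence immediately after the theorem claims Berkovich's classification coincides with that definition, and Theorem \ref{main theo in intro} treats type 4 as precisely the non-Abhyankar (immediate) case. All of this requires the statement you proved, which is also what \cite[1.4.4]{Berkovich_2012} actually says. So you have proven the intended and true assertion; just note the discrepancy with the printed wording rather than silently proving its opposite.
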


\begin{proof}
    See \cite[1.4.4]{Berkovich_2012}
\end{proof}

In particular, Berkovich's classification coincides with Definition \ref{defi: Berkovich classify 1 dim geo rings}.

\section{Proof of the main theorems}

The goal of this section is to prove Theorem \ref{main theo in intro} and Theorem \ref{theo: main 2 f finite}.

\subsection{From $F$-lifting to $F$-splitting}

In this part, we fix a rank one Tate valuation ring $\calO$ over $\dF_p$ with a pseudo-uniformizer $\varpi\in \calO$. Denote by $\calK$ the fractional field of $\calO$. Our goal is to prove the following:

\begin{theo}\label{theo: f lift to f split}
    Assume that the vector space $\widehat{\Omega}_{\calO}[\frac{1}{\varpi}]$ over $\calK$ is finite-dimensional. If $\calO$ is $F$-liftable, then $\calO$ is $F$-split.
\end{theo}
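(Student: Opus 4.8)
The plan is to obtain the $F$-splitting of $\calO$ from its Cartier-liftability — which follows from $F$-liftability by Lemma~\ref{lemma: F lifting-W_2 split-cartier lift} — together with the fact that $\calO$ is cotangent-smooth by Gabber's Theorem~\ref{theo:flat cotangent cpx of val ring in char p and cartier smooth}.

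\emph{Bookkeeping.} Set $d=\dim_\calK\widehat\Omega_\calO[\tfrac1\varpi]$. Since $\widehat\Omega_\calO$ is flat over $\calO$ (Theorem~\ref{theo:flat cotangent cpx of val ring in char p and cartier smooth}), so is each exterior power; as $\widehat\Omega^i_\calO[\tfrac1\varpi]=\bigwedge^i\widehat\Omega_\calO[\tfrac1\varpi]=0$ for $i>d$ and flat modules are $\varpi$-torsion-free, we get $\widehat\Omega^i_\calO=0$ for $i>d$. Hence all top forms are closed, $\omega:=\widehat\Omega^d_\calO$ is a rank-one flat $\calO$-module, and combining Lemma~\ref{lemma: calculate complete de rham cohomology} with the isomorphism $C^{-1}_{\calO,d}$ from Theorem~\ref{theo:flat cotangent cpx of val ring in char p and cartier smooth} yields an $\calO$-linear surjection, the Cartier operator,
\[
  \widetilde C\colon F_*\omega \longrightarrow \widehat{\hol}^d_{\mathrm{dR}}(\calO)\ \xrightarrow{\ \sim\ }\ \omega,\qquad \ker\widetilde C=\dif\bigl(F_*\widehat\Omega^{d-1}_\calO\bigr).
\]
Recall that $\calO$ is $F$-split precisely when the Frobenius, viewed as the $\calO$-linear map $\calO\to F_*\calO$, $a\mapsto a^p$, admits an $\calO$-linear retraction.

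\emph{Splitting the Cartier operator.} Choose an $F$-lifting $(\widetilde\calO,\varphi)$, which we may take $\varpi$-adically complete. By Lemma~\ref{lemma: F lifting-W_2 split-cartier lift} and its proof, together with Lemma~\ref{lemma: calculate complete de rham cpx}, one obtains an $\calO$-linear map $\rho\colon\widehat\Omega^1_\calO\to(F_*\widehat\Omega^1_\calO)^{\dif=0}$ with $\mathrm{proj}\circ\rho=C^{-1}_{\calO,1}$ (given by $\tfrac1p\,\dif\varphi$, as $\calO$ is cotangent-smooth). Following Deligne--Illusie, $\rho$ is multiplicative: $\omega_1\wedge\cdots\wedge\omega_i\mapsto\rho(\omega_1)\wedge\cdots\wedge\rho(\omega_i)$ is a well-defined $\calO$-linear map $\rho_i\colon\widehat\Omega^i_\calO\to F_*\widehat\Omega^i_\calO$ landing in closed forms (wedges of closed forms are closed; a one-form wedged with itself vanishes; $\calO$-linearity is automatic for the twisted structure). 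By multiplicativity of the inverse Cartier operator (Definition~\ref{defi: inv cartier}), the class of $\rho(\omega_1)\wedge\cdots\wedge\rho(\omega_d)$ in $\widehat{\hol}^d_{\mathrm{dR}}(\calO)$ is $C^{-1}_{\calO,d}(\omega_1\wedge\cdots\wedge\omega_d)$; hence $\widetilde C\circ\rho_d=\id_\omega$, i.e.\ $\rho_d$ is an $\calO$-linear section of $\widetilde C\colon F_*\omega\twoheadrightarrow\omega$. (More structurally, the same argument produces a decomposition $F_*\widehat{\mathrm{dR}}_\calO\simeq\bigoplus_{i=0}^{d}\widehat\Omega^i_\calO[-i]$ in $D(\calO)$, where the boundedness coming from $d<\infty$ is what keeps things manageable.)

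\emph{Concluding by duality.} The remaining point is to convert the split surjection $\widetilde C\colon F_*\omega\to\omega$ into a retraction $\tau\colon F_*\calO\to\calO$ of Frobenius. Here $\omega=\widehat\Omega^d_\calO$ should play the role of a dualizing module: using cotangent-smoothness and $[\calK:\calK^p]=p^d<\infty$, duality for the Frobenius ought to give a canonical identification of $\Hom_\calO(F_*\calO,\calO)$ with $F_*$ of a suitable Frobenius-twisted power of $\omega$, under which the Frobenius $\calO\to F_*\calO$ corresponds to $\widetilde C$ and therefore a section of $\widetilde C$ matches an $\calO$-linear $\tau$ with $\tau(1)=1$. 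This is the step I expect to be the main obstacle, since the relevant Grothendieck--Serre duality is not available off the shelf: $F\colon\calO\to\calO$ is neither finite — that would be $F$-finiteness, which fails for type $3$ points by Theorem~\ref{theo: main 2 f finite} — nor of finite type, and $\omega$ need not be an invertible $\calO$-module. One way around this is to invert $\varpi$, where everything becomes finite-dimensional over the field $\calK$ and the duality is classical, producing a $\calK$-linear $F$-splitting of $\calK$, and then to show that integrality of $\rho_d$ forces the resulting retraction to carry $F_*\calO$ into $\calO$. Alternatively one works inside $D(\calO)$ with the decomposition above, factoring the Frobenius as $\calO[0]\hookrightarrow F_*\widehat{\mathrm{dR}}_\calO\twoheadrightarrow(F_*\calO)[0]$ and verifying that the splitting of the first map descends through the second — the delicate point being that the flat modules $\widehat\Omega^i_\calO$ are not projective, so the relevant $\Ext$-groups must be controlled.
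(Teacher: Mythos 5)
You correctly carry out the first two steps of the paper's own argument: from $F$-liftability to a Cartier lifting (Lemma~\ref{lemma: F lifting-W_2 split-cartier lift}), and from there to a multiplicative $\calO$-linear map $\rho_d\colon\widehat\Omega^d_\calO\to F_*\widehat\Omega^d_\calO$ landing in closed forms and splitting the (completed) Cartier operator $\widetilde C$ on $\omega=\widehat\Omega^d_\calO$; this $\rho_d$ is exactly the paper's $\widehat\Phi_n$ with $n=d$. But your third paragraph is not a proof: you candidly identify ``converting the split surjection $\widetilde C$ into a retraction of Frobenius'' as the main obstacle and then offer only speculative routes (Grothendieck duality, inverting $\varpi$ and controlling integrality, a decomposition in $D(\calO)$), none of which is carried to completion. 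As you yourself observe, Frobenius on $\calO$ is not finite or of finite type here, and $\omega$ is only flat of generic rank one rather than invertible, so the duality you want is genuinely unavailable; this is a real gap, not a presentational one.

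The paper closes this gap without any duality. The key input you are missing is Lemma~\ref{lemma: end of 1 rank almost free mod}: for a $\varpi$-adically complete flat $\calO$-module $\calV$ with $\dim_\calK\calV[\tfrac1\varpi]=1$, every $\calO$-endomorphism of $\calV$ is multiplication by a unique scalar in $\calO$. Apply this to $\calV=\omega$. For each $f\in\calO$, the composite
\[
\widehat C_d\circ\operatorname{pr}_d\circ\operatorname{mult}_f\circ\rho_d\ :\ \omega\longrightarrow\omega
\]
is an $\calO$-endomorphism of $\omega$ (note that $\operatorname{mult}_f$ \emph{is} $\calO$-linear as an endomorphism of the Frobenius twist $F_*\omega$), hence equals multiplication by a unique scalar $\eta(f)\in\calO$. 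The assignment $f\mapsto\eta(f)$ is the sought retraction: $\eta(1)=1$ because $\widehat C_d\circ\operatorname{pr}_d\circ\rho_d=\id$ (this is your statement $\widetilde C\circ\rho_d=\id$), and $\eta(g^pf)=g\,\eta(f)$ because $\operatorname{mult}_{g^pf}=g\cdot\operatorname{mult}_f$ as endomorphisms of $F_*\omega$, which is precisely $\calO$-linearity of $\eta\colon F_*\calO\to\calO$. In short: rather than splitting $\widetilde C$ and trying to dualize, one should vary the test element $f$, insert the multiplication $\operatorname{mult}_f$ between $\rho_d$ and $\widetilde C$, and read off the resulting scalar; this produces the $F$-splitting directly and is where the finite-dimensionality hypothesis (via rank-one-ness of $\omega$) is actually used.
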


Before proving the theorem, we establish the following key lemma.

\begin{lemma}\label{lemma: end of 1 rank almost free mod}
    Let $\mathcal{V}$ be a $\varpi$-adically complete flat $\mathcal{O}$-module such that $\dim_{\mathcal{K}} \mathcal{V}[\frac{1}{\varpi}] = 1$. Then every $\mathcal{O}$-endomorphism of $\mathcal{V}$ is given by scalar multiplication.
\end{lemma}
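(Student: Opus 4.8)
The plan is to invert the pseudo-uniformizer, exploit that a one-dimensional $\calK$-vector space has only scalar endomorphisms, and then show that the resulting scalar is integral; the $\varpi$-adic completeness hypothesis will be used precisely, and only, in that last integrality step.

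Concretely, I would first note that $\mathcal{V}$ is $\varpi$-torsion free by flatness (Lemma~\ref{lemma: val rings flat}), so that the localization map $\mathcal{V}\hookrightarrow\mathcal{V}[\frac{1}{\varpi}]$ is injective. Given an $\calO$-endomorphism $\phi$ of $\mathcal{V}$, inverting $\varpi$ yields a $\calK$-linear endomorphism of $\mathcal{V}[\frac{1}{\varpi}]$, which by hypothesis is one-dimensional over $\calK$; hence $\phi[\frac{1}{\varpi}]$ is multiplication by a unique scalar $\lambda\in\calK$, and pulling back along $\mathcal{V}\hookrightarrow\mathcal{V}[\frac{1}{\varpi}]$ gives $\phi(v)=\lambda v$ for all $v\in\mathcal{V}$, so in particular $\lambda\mathcal{V}\subseteq\mathcal{V}$. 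Everything then reduces to showing $\lambda\in\calO$, and then $\phi$ is the scalar multiplication by $\lambda$.

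For this I would argue by contradiction: if $\lambda\notin\calO$, then $\lambda^{-1}$ is a nonzero non-unit of the valuation ring $\calO$, hence lies in $\sqrt{\varpi\calO}$, which equals the maximal ideal because $\calO$ has rank one; thus $\lambda^{-N}\in\varpi\calO$ for some $N\ge 1$. On the other hand, $\lambda^{-1}\in\calO$ gives $\lambda^{-1}\mathcal{V}\subseteq\mathcal{V}$, while $\lambda\mathcal{V}\subseteq\mathcal{V}$ gives $\mathcal{V}\subseteq\lambda^{-1}\mathcal{V}$; hence $\mathcal{V}=\lambda^{-1}\mathcal{V}$ and, iterating, $\mathcal{V}=\lambda^{-N}\mathcal{V}\subseteq\varpi\mathcal{V}\subseteq\mathcal{V}$. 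Therefore $\varpi\mathcal{V}=\mathcal{V}$, whence $\mathcal{V}=\bigcap_{n\ge 1}\varpi^n\mathcal{V}=0$ by the $\varpi$-adic separatedness of $\mathcal{V}$ — contradicting $\dim_{\calK}\mathcal{V}[\frac{1}{\varpi}]=1$. Hence $\lambda\in\calO$.

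The proof is short once organized this way; the only delicate point is the final step, where one genuinely needs completeness, and only through the separatedness $\bigcap_n\varpi^n\mathcal{V}=0$. Without this hypothesis the statement is false: for instance $\mathcal{V}=\calK$ is a flat $\calO$-module with $\dim_{\calK}\mathcal{V}[\frac{1}{\varpi}]=1$ on which multiplication by $\varpi^{-1}$ is a non-integral $\calO$-endomorphism. It is also worth tracking where rank one enters — only to guarantee that $\sqrt{\varpi\calO}$ is the maximal ideal, so that $\lambda^{-1}$ being a non-unit forces $\lambda^{-N}\in\varpi\calO$.
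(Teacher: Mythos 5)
Your proof is correct and follows essentially the same route as the paper's: invert $\varpi$ to identify the endomorphism with a scalar in $\calK$, then use $\varpi$-adic separatedness to rule out the scalar being non-integral. The paper packages the last step as an appeal to the fact that the endomorphism ring $\{a\in\calK : a\mathcal{V}\subseteq\mathcal{V}\}$ is a proper subring of $\calK$ containing $\calO$, hence equals $\calO$ by the classification of rings between a rank-one valuation ring and its fraction field; you unpack exactly that classification into the direct argument $\lambda^{-N}\in\varpi\calO\Rightarrow\mathcal{V}=\varpi\mathcal{V}\Rightarrow\mathcal{V}=0$, so the two proofs use the same underlying mechanism.
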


\begin{proof}
    Since $\mathcal{V}$ is flat over $\mathcal{O}$, the localization map $\mathcal{V} \to \mathcal{V}[\frac{1}{\varpi}]$ is injective. As $\dim_{\mathcal{K}} \mathcal{V}[\frac{1}{\varpi}] = 1$, we have
\[
\mathcal{K} = \operatorname{End}_{\mathcal{K}}(\mathcal{V}[\tfrac{1}{\varpi}]) = \operatorname{End}_{\mathcal{O}}(\mathcal{V}[\tfrac{1}{\varpi}]).
\]
It follows that
\[
\operatorname{End}_{\mathcal{O}}(\mathcal{V}) = \{ a \in \mathcal{K} : a \mathcal{V} \subseteq \mathcal{V}\text{ as submodules of $\calV[\frac{1}{\varpi}]$} \}.
\]
Now, since $\mathcal{V}$ is $\varpi$-adically complete, there exists an element $a \in \mathcal{K}$ such that $a \mathcal{V} \not\subseteq \mathcal{V}$. Consequently, $\operatorname{End}_{\mathcal{O}}(\mathcal{V})$ is a proper subring of $\mathcal{K}$ containing $\mathcal{O}$. Therefore, we conclude that $\operatorname{End}_{\mathcal{O}}(\mathcal{V}) = \mathcal{O}$.
\end{proof}

\begin{proof}[Proof of Theorem \ref{theo: f lift to f split}]

By Lemma \ref{lemma: F lifting-W_2 split-cartier lift}, there exists a Cartier lifting $\Phi: \Omega_{\mathcal{O}/\mathbb{F}_p} \to F_*\Omega_{\mathcal{O}/\mathbb{F}_p}$.

Let $n = \dim_{\mathcal{K}} \widehat{\Omega}_{\mathcal{O}}[\frac{1}{\varpi}]$. Denote by $\widehat{\Phi}_n: \widehat{\Omega}_{\mathcal{O}}^n \to F_*\widehat{\Omega}_{\mathcal{O}}^n$ the $n$-th complete wedge product of $\Phi$. By Lemma \ref{lemma: calculate complete de rham cpx} and Lemma \ref{lemma: calculate complete de rham cohomology}, there exists a canonical projection $\operatorname{pr}_n$ from $F_*\widehat{\Omega}_{\mathcal{O}}^n$ to $\widehat{H}_{\mathrm{dR}}^n(\mathcal{O})$. We claim that the diagram
\begin{equation}\label{eq: diagram in F-lift to F-split}
        \begin{tikzcd}[column sep=large]
        \widehat\Omega^n_{\calO} \ar[dr,"\widehat C_{n}^{-1}"swap]\arrow{r}{\widehat\Phi_n} & F_*\widehat{\Omega}_{\calO}^n\arrow{d}{\mathrm{pr}_n}\\
        {} & \widehat{\hol}^n_{\mathrm{dR}}(\calO)
    \end{tikzcd}
    \end{equation}
commutes, where $\widehat{C}_n^{-1}$ is the completion of the $n$-th inverse Cartier homomorphism.

To establish this commutativity, it suffices to verify the corresponding statement in the incomplete case. Specifically, we need to show that the following diagram commutes:
\begin{equation}\label{eq: diagram in F-lift to F-split imcomplete}
        \begin{tikzcd}[column sep=large]
        \Omega^n_{\calO} \ar[dr," C_{n}^{-1}"swap]\arrow{r}{\Phi_n} & \big(F_*{\Omega}^{n}_{\calO}\big)^{\dif=0}\arrow{d}{\mathrm{pr}_n}\\
        {} & {\hol}^n_{\mathrm{dR}}(\calO)
    \end{tikzcd}.
\end{equation}

By the functoriality of cup products, it is sufficient to verify the case $n=1$, which has already been established in Lemma \ref{lemma: F lifting-W_2 split-cartier lift}.

We now define the key construction. Denote by $\widehat{C}_n$ the inverse of $\widehat{C}_n^{-1}$, and for any $f \in \mathcal{O}$, denote by $\operatorname{mult}_f$ the multiplication-by-$f$ map on $\widehat{\Omega}_{\mathcal{O}}^n$. When viewing $\operatorname{mult}_f$ as an endomorphism of $F_*\widehat{\Omega}_{\mathcal{O}}^n$, we observe that $\operatorname{mult}_{g^p f} = g \cdot \operatorname{mult}_f$ for any $f, g \in \mathcal{O}$. 

By Lemma \ref{lemma: end of 1 rank almost free mod}, for each $f \in \mathcal{O}$, there exists a unique $\eta(f) \in \mathcal{O}$ such that for any $x \in \widehat{\Omega}_{\mathcal{O}}^n$,
\[
\eta(f)x = (\widehat{C}_n \circ \operatorname{pr}_n \circ \operatorname{mult}_f \circ \widehat{\Phi}_n)(x).
\]

We now verify that the map $\eta$ defines an $F$-splitting. First, $\eta(1) = 1$ follows from the commutativity of Diagram (\ref{eq: diagram in F-lift to F-split}). Moreover, for any $f, g \in \mathcal{O}$, the identity $\operatorname{mult}_{g^p f} = g \cdot \operatorname{mult}_f$ implies that $\eta(g^p f) = g \eta(f)$, which completes the proof.
\end{proof}

\begin{rmk}
     The technique employed in this proof is adapted from the argument showing that for a smooth variety $X$ over a perfect field $k$, if $X$ admits an $F$-lifting over $W_2(k)$, then the structure sheaf $\mathcal{O}_X$ is $F$-split. We refer to \cite{Buch_1997} for further details.
\end{rmk}

In our applications, we will use another form of Theorem \ref{theo: f lift to f split}. For simplicity, assume that the value group of $\mathcal{K}$ is dense in $\mathbb{R}_{>0}$. Suppose $\mathcal{O}^{++}$ is the maximal ideal of $\mathcal{O}$, and we discuss almost mathematics with respect to this ideal (cf. \cite[Section 2]{scholze2012padic}).

Assume that $M$ is a flat, $\varpi$-adically complete module over $\mathcal{O}$.

\begin{theo}\label{theo: equi cond of finite dim}
	The following are equivalent:
	\begin{enumerate}[label=(\roman*)]
		\item $M$ is almost finitely generated;
		\item $M/\varpi$ is an almost finitely generated $\mathcal{O}/\varpi$-module;
		\item $M[\frac{1}{\varpi}]$ is a finite-dimensional $\mathcal{K}$-vector space.	
	\end{enumerate}
\end{theo}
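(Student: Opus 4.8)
The plan is to prove the equivalences via $(i)\Rightarrow(ii)$, $(i)\Rightarrow(iii)$, $(ii)\Rightarrow(iii)$ and $(iii)\Rightarrow(i)$, so that $(iii)\Rightarrow(i)\Rightarrow(ii)\Rightarrow(iii)$ closes the loop. Two observations will be used throughout. By Lemma~\ref{lemma: val rings flat}, flatness of $M$ over $\calO$ is the same as $\varpi$-torsion-freeness, so $M$ embeds into the $\calK$-vector space $V:=M[\tfrac1\varpi]$, and every finitely generated submodule of $M$ is finitely generated and torsion-free over the valuation ring $\calO$, hence free of finite rank. Density of the value group — which is precisely what makes $\mathcal{O}^{++}$ idempotent, so that the almost setup is legitimate — will be used to factor any element of $\mathcal{O}^{++}$ as a product of elements of $\mathcal{O}^{++}$ and to produce elements of $\calO$ with absolute value in any prescribed non-empty open interval of $\dR_{>0}$.

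First the easy directions. For $(i)\Rightarrow(ii)$ one base changes along $\calO\to\calO/\varpi$: the image in $M/\varpi$ of a finitely generated $N_\epsilon\subseteq M$ with $\epsilon M\subseteq N_\epsilon$ is a finitely generated $\calO/\varpi$-submodule containing $\epsilon(M/\varpi)$. For $(i)\Rightarrow(iii)$ one base changes along $\calO\to\calK=\calO[\tfrac1\varpi]$: there the ideal $\mathcal{O}^{++}$ becomes the unit ideal (it already contains pseudo-uniformizers), so the almost category over $\calK$ is the usual one and ``almost finitely generated'' becomes honestly ``finitely generated'', i.e.\ finite-dimensional. For $(ii)\Rightarrow(iii)$, choose (by density) $\epsilon_0\in\mathcal{O}^{++}$ with $|\varpi|<|\epsilon_0|<1$; almost finite generation of $M/\varpi$ gives a finitely generated — hence free — submodule $N\subseteq M$ with $\epsilon_0 M\subseteq N+\varpi M$. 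Put $W:=N[\tfrac1\varpi]\subseteq V$ and $\overline M:=M/(M\cap W)\hookrightarrow V/W$. Then $\overline M$ is $\varpi$-torsion-free (as $W$ is a subspace), hence flat over $\calO$; combined with $\varpi$-completeness of $M$ this gives $(M\cap W)\cap\varpi^kM=\varpi^k(M\cap W)$ and, passing to the limit over $k$, that $M\cap W$ is $\varpi$-adically closed in $M$, so $\overline M$ is $\varpi$-adically separated. Inside $V/W$, the containment $\epsilon_0 M\subseteq N+\varpi M$ together with $N\subseteq M\cap W$ yields $\epsilon_0\overline M\subseteq\varpi\overline M$, hence $\epsilon_0^k\overline M\subseteq\varpi^k\overline M$ for all $k$, so every $\bar y\in\overline M$ satisfies $(\epsilon_0/\varpi)^k\bar y\in\overline M$ for all $k\ge0$. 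Since $|\epsilon_0/\varpi|>1$, the $\calO$-submodule $\sum_k\calO(\epsilon_0/\varpi)^k$ of $\calK$ has unbounded absolute values, hence equals $\calK$; thus $\calK\bar y\subseteq\overline M$, whence $\calK\bar y=\varpi^k\calK\bar y\subseteq\varpi^k\overline M$ for all $k$ and so $\calK\bar y\subseteq\bigcap_k\varpi^k\overline M=0$. Therefore $\overline M=0$, i.e.\ $M\subseteq N[\tfrac1\varpi]$, and $\dim_\calK V\le\operatorname{rank}N<\infty$.

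The implication $(iii)\Rightarrow(i)$ is where the real work lies, and I would split it into a boundedness step and a module-theoretic step. For boundedness: choosing a $\calK$-basis $e_1,\dots,e_d$ of $V$ inside $M$ and setting $L_0=\bigoplus_i\calO e_i$, the quotient $M/L_0$ has $(M/L_0)[\tfrac1\varpi]=0$, so it is $\varpi$-power-torsion, and it is the cofiber of a map of classically (hence derived) $\varpi$-complete modules, hence derived $\varpi$-complete; since a derived $\varpi$-complete $\varpi$-power-torsion module is killed by a single power of $\varpi$, one gets $\varpi^NM\subseteq L_0$, i.e.\ $L_0\subseteq M\subseteq\varpi^{-N}L_0=:L_1$ with $L_1$ finitely generated free. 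It then suffices to prove the general fact that \emph{every $\calO$-submodule of a finitely generated free $\calO$-module is almost finitely generated}. By induction on the rank — using a coordinate projection $\calO^r\twoheadrightarrow\calO$ and the closure of the class of almost finitely generated modules under extensions (where one again factors the given element of $\mathcal{O}^{++}$ by density) — this reduces to the rank-$1$ case: a finitely generated (equivalently, principal) ideal is trivially almost finitely generated, while a non-principal ideal has the form $I=\{z:|z|<c\}$ and, for $\epsilon\in\mathcal{O}^{++}$, density furnishes $x$ with $|x|\in[c|\epsilon|,c)$, so that $\epsilon I\subseteq x\calO\subseteq I$. Applying this to $M\subseteq L_1$ gives $(i)$.

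I expect the single genuine obstacle to be the boundedness step of $(iii)\Rightarrow(i)$: it is the only point at which completeness is really used, and it rests on the standard but non-formal input that a derived $\varpi$-complete module all of whose elements are $\varpi$-power-torsion is annihilated by a fixed power of $\varpi$; the same circle of ideas underlies the claim in $(ii)\Rightarrow(iii)$ that $\overline M$ remains $\varpi$-adically separated. (Alternatively one could bypass $(ii)\Rightarrow(iii)$ altogether by invoking the completeness lemma of almost ring theory — a flat $\varpi$-complete $M$ with $M/\varpi$ almost finitely generated is almost finitely generated — to obtain $(ii)\Rightarrow(i)$; the organization above is chosen to minimize external input.) Everything else — the two base changes, the Mittag–Leffler bookkeeping for the completeness statements, and the rank induction for submodules of free modules — should be routine.
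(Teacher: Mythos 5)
Your overall plan (close the cycle $(iii)\Rightarrow(i)\Rightarrow(ii)\Rightarrow(iii)$, with flatness~$=$~$\varpi$-torsion-freeness and the boundedness of $M$ as the two workhorses) is sound, and your $(iii)\Rightarrow(i)$ is essentially the paper's argument made a little more explicit: the paper also inducts, on $\dim_\calK M[\frac1\varpi]$ with the base case a bounded open submodule of $\calK$ and the inductive step via a hyperplane $N'$ and the extension lemma from Gabber--Ramero, whereas you first pin $M$ between two finitely generated free lattices and then induct on the rank, so the ideas are the same and your explicit boundedness step (via ``derived complete $+$ $\varpi$-power torsion $\Rightarrow$ bounded torsion'') usefully makes precise a point the paper dispatches with ``it is not difficult to verify.''

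The genuine gap is in your $(ii)\Rightarrow(iii)$. You form $\overline M=M/(M\cap W)$ with $W=N[\frac1\varpi]$, observe $\epsilon_0\overline M\subseteq\varpi\overline M$, and from $\calK\bar y\subseteq\overline M$ conclude $\bar y\in\bigcap_k\varpi^k\overline M=0$. That last equality is precisely $\varpi$-adic separatedness of $\overline M$, equivalently closedness of $M\cap W$ in $M$, and your justification --- ``$(M\cap W)\cap\varpi^kM=\varpi^k(M\cap W)$ and, passing to the limit over~$k$'' --- does not establish it. The identity $(M\cap W)\cap\varpi^kM=\varpi^k(M\cap W)$ only says the $\varpi$-adic topology on $M\cap W$ is the subspace topology; from there, ``passing to the limit'' to deduce closedness requires $M\cap W$ to be $\varpi$-adically complete, which is exactly the statement at issue. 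Note that what your own argument yields unconditionally is $\overline M=\bigcap_k\varpi^k\overline M$, i.e.\ $\overline M$ is $\varpi$-divisible and hence a $\calK$-vector space; to rule out $\overline M\ne0$ you would need to know either that $\overline M$ is separated or that $M\cap W$ is bounded in $W\cong\calK^d$, and at this stage of the proof (where $M[\frac1\varpi]$ is not yet known to be finite-dimensional) neither is available by the easy lattice argument you invoke in step $(iii)\Rightarrow(i)$.

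The paper sidesteps this entirely. Its $(ii)\Rightarrow(iii)$ never forms a quotient of $M$: instead it fixes $\eta$ with $\varpi\in\eta^2\calO$ and generators $m_1,\dots,m_r$ with $\eta M\subseteq N+\varpi M$, and then runs a successive-approximation scheme producing, for each $m\in M$, a tuple of sequences $(a_{i,j})_i$ in $\calO$ with $a_{i,j}-a_{i+1,j}\in\eta^i\calO$ and $\eta m-\sum_j a_{i,j}m_j\in\eta^{i+1}M$; completeness of $\calO$ gives limits $a_j$, separatedness of $M$ gives $\eta m=\sum_j a_jm_j$, and hence $\eta M\subseteq N$ and $M[\frac1\varpi]=N[\frac1\varpi]$. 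The only completeness used is of $\calO$ itself together with separatedness of $M$, both of which are hypotheses; no claim about a quotient being separated is needed. You should either replace your quotient argument by this direct approximation, or supply an actual proof that $M\cap W$ is closed in $M$ (or, as you suggest in your closing parenthesis, invoke a completeness lemma from almost ring theory for $(ii)\Rightarrow(i)$ and drop $(ii)\Rightarrow(iii)$ altogether).
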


\begin{proof}
	(i) $\Rightarrow$ (ii) is obvious.
	
	For (ii) $\Rightarrow$ (iii): Since the value group of $\mathcal{K}$ is dense, we can choose $\eta\in \mathcal{O}^{++}$ such that $\varpi\in\eta^2\mathcal{O}$. By the definition of almost finitely generated, we can choose elements $m_1,m_2,\dots,m_r\in M$ such that the submodule they generate in $M/\varpi$ contains $\eta M/\varpi M$. Let $N$ be the submodule of $M$ generated by $m_1,m_2,\dots,m_r$, then we have
	\begin{equation}\label{eq: N+varpi M supseteq eta M}N+\eta^2 M\supseteq N+\varpi M\supseteq \eta M.\end{equation}
	We will show that $N\supseteq \eta M$. For any $m\in M$, we construct by induction sequences $\{a_{i,j}:i\geq 1,j=1,2,\dots,r\}$ satisfying:
	\begin{enumerate}[label=(\alph*)]
		\item $a_{i,j}-a_{i+1,j}\in \eta^i M$;
		\item $\eta m-\sum_{j=1}^r a_{i,j}m_j\in \eta^{i+1} M$.
	\end{enumerate}
	For $i=1$, from (\ref{eq: N+varpi M supseteq eta M}), there exist $\{a_{1,j}:j=1,2,\dots, r\}$ such that 
	\[\eta m-\sum_{j=1}^r a_{1,j}m_j\in \eta^2 M.\] 
	Suppose we have constructed $\{a_{n,j}:j=1,2,\dots,r\}$. Let $\eta^{n+1}z=\eta m-\sum_{j=1}^r a_{n,j}m_j$. Then by (\ref{eq: N+varpi M supseteq eta M}), there exist $b_1,b_2,\dots,b_r\in\mathcal{O}$ such that $\eta z-\sum_{j=1}^rb_jm_j\in\eta^2 M$. Let $a_{n+1,j}=a_{n,j}+\eta^n b_j$, then
	\begin{align*}
	\eta m-\sum_{j=1}^r a_{n+1,j}m_j&=\eta^{n+1}z-\eta^n\sum_{j=1}^rb_jm_j\\
	&=\eta^n(\eta z-\sum_{j=1}^r b_jm_j)\in \eta^{n+1}M.
	\end{align*}
	This completes the construction. Using this, we deduce by an approximation argument that $N\supseteq \eta M$. Thus, $M[\frac{1}{\varpi}]=M[\frac{1}{\eta}]=N[\frac{1}{\eta}]$ is finite dimensional.
	
	For (iii) $\Rightarrow$ (i): We proceed by induction on the dimension. When $\dim_{\mathcal{K}}(M[\frac{1}{\varpi}])=1$, we can regard $M$ as a bounded open submodule of $\mathcal{K}$. Therefore, the norms of elements in $M$ have a finite supremum $r$, and it is not difficult to verify that $M$ is almost isomorphic to the module $\{x\in \mathcal{K}: |x|\leq r\}$. Hence $M$ is almost finitely generated. Now assume the statement holds for $\dim_{\mathcal{K}}(M[\frac{1}{\varpi}])=d$, and suppose $\dim_{\mathcal{K}}(M[\frac{1}{\varpi}])=d+1$. Take a $d$-dimensional subspace $N'$ of $M[\frac{1}{\varpi}]$ and let $N=N'\cap M$. It is not difficult to verify that $N$ is a closed submodule of $M$, hence also $\varpi$-adically complete. By the induction hypothesis, $N$ is almost finitely generated. Moreover, by construction $M/N$ is torsion-free and it is not difficult to verify that it is also $\varpi$-adically complete. Therefore, by the 1-dimensional case, $M/N$ is almost finitely generated. Then by \cite[Lemma 2.3.18]{Gabber_2003}, it follows that $M$ is almost finitely generated.
\end{proof}

\subsection{A key theorem}

We now present a fundamental result that provides a sufficient condition that a valuation ring is not F-split.

\begin{theo}\label{theo: non f split}
    Let $\mathcal{K}$ be a complete non-Archimedean field of characteristic $p$. Assume the following conditions hold:
    \begin{enumerate}[label=(\roman*)]
        \item $\calK^+$ is not perfect;
        \item The value group of $\calK$ is $p$-divisible;
        \item The residue field is perfect.
    \end{enumerate}
    Then $\calK^+$ is not $F$-split.
\end{theo}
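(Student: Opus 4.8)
The plan is to argue by contradiction: assume $\calK^+$ is $F$-split, i.e. the Frobenius $F \colon \calK^+ \to \calK^+$ admits an $\calK^+$-linear retraction $s \colon F_*\calK^+ \to \calK^+$ with $s(1) = 1$. Under hypotheses (ii) and (iii), the key point is that $F_*\calK^+$ is "almost" a free rank-one $\calK^+$-module, so such a retraction cannot exist unless $F$ is almost surjective, which will force $\calK^+$ to be almost perfect — and then genuinely perfect after a small extra argument, contradicting (i). Concretely: since the residue field $\kappa$ is perfect, $\kappa = \kappa^p$; since the value group $\Gamma$ is $p$-divisible, $\Gamma = p\Gamma$. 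These two facts together say exactly that the cokernel of $F \colon \calK^+ \to \calK^+$ is killed by every element of the maximal ideal $\mathfrak m$ (equivalently $\calK^+/(\calK^+)^p$ is an almost-zero module): given $x \in \calK^+$ with valuation $v(x) = \gamma$, write $\gamma = p\gamma' + (\text{something of valuation} \ge v(\eta))$ for a chosen small $\eta \in \mathfrak m$, and correct the leading term using perfectness of $\kappa$; iterate and use $\varpi$-adic completeness of $\calK^+$ to conclude that $x - y^p \in \eta \calK^+$ for a suitable $y$. Hence $F_*\calK^+$ is, up to $\eta$-torsion phenomena for all $\eta \in \mathfrak m$, a cyclic $\calK^+$-module generated by $1$.

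With this in hand I would run the following argument. Let $s$ be the putative splitting. For any $x \in \calK^+$ and any $\eta \in \mathfrak m$, write $\eta x = (\eta^{1/p})^p \cdot x' + r$ with $r \in \eta^2\calK^+$ (possible after shrinking, using $p$-divisibility of $\Gamma$ to extract $\eta^{1/p}$, or simply using the almost-surjectivity just established); applying $s$ and $\calK^+$-linearity, $s(\eta x) = \eta^{1/p} s(x') + s(r)$, and one extracts a contradiction from the fact that $s$, being $\calK^+$-linear for the *twisted* module structure on $F_*\calK^+$, must satisfy $s(a^p m) = a\, s(m)$; combined with $s(1) = 1$ and almost-surjectivity, this forces $s$ itself to be, almost everywhere, a $p$-th root map, i.e. to exhibit every element of $\mathfrak m \calK^+$ as a $p$-th power. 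That makes $\calK^+$ almost perfect. Finally, to upgrade "almost perfect" to "perfect" and contradict (i): a valuation ring that is $F$-split is in particular reduced and $F$-pure, and an $F$-split ring whose Frobenius is almost surjective (cokernel killed by $\mathfrak m$) must have surjective Frobenius — here one uses that the retraction $s$ provides a *splitting* $\calK^+ = \calK^+ \oplus C$ as $\calK^+$-modules with $C = \ker s$, so $C$ is a direct summand killed by $\mathfrak m$; but a nonzero direct summand of $\calK^+$ (a domain) cannot be torsion, so $C = 0$, i.e. $F$ is an isomorphism and $\calK^+$ is perfect, contradiction.

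The main obstacle I anticipate is making the passage from "almost" statements to honest statements rigorous: the hypotheses (ii) and (iii) naturally only give that $\coker(F)$ is almost zero, not zero, and the splitting $s$ is an honest $\calK^+$-module map, so one must be careful about whether a $\calK^+$-module direct summand of $\calK^+$ that is almost zero is actually zero. The cleanest route is the last step above — exploiting that $\ker(s)$ is literally a direct summand of the domain $\calK^+$, hence flat, hence ($\varpi$-torsion-free by Lemma \ref{lemma: val rings flat}) torsion-free, so if it is killed by some nonzero $\eta \in \mathfrak m$ it must vanish. I would organize the write-up so that the almost-mathematics is confined to establishing almost-surjectivity of $F$ (a clean computation with valuations and the perfect residue field, using $\varpi$-adic completeness exactly as in the proof of Theorem \ref{theo: equi cond of finite dim}), and then deduce genuine surjectivity purely from the module-theoretic splitting, avoiding any delicate almost-to-honest comparison for $s$ itself. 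A secondary subtlety is the convergence of the successive-approximation process producing the $p$-th root $y$ with $x - y^p \in \eta\calK^+$; this is routine given completeness, and structurally identical to the inductive construction already carried out in the proof of Theorem \ref{theo: equi cond of finite dim}, so I would simply cite that pattern.
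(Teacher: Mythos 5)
Your approach has a fatal gap and follows a fundamentally different (and ultimately unworkable) route from the paper's. The claimed key step---that hypotheses (ii) and (iii) force $\coker(F)$ to be almost zero, i.e.\ killed by every $\eta\in\frakm$---is false and cannot be salvaged. As a $\calK^+$-module (with the twisted action $a\cdot m=a^pm$ on $F_*\calK^+$), the cokernel $\coker(F)$ is automatically torsion-free: if $a\cdot[m]=0$, meaning $a^pm=y^p$ in $\calK^+$ for some $y$, then $m=(y/a)^p$ with $|y/a|=|m|^{1/p}\le 1$, so $y/a\in\calK^+$ and $[m]=0$. Hence ``almost zero'' and ``zero'' coincide for this module, and $\coker(F)=0$ says precisely that $\calK^+$ is perfect---the negation of hypothesis (i). So the premise you try to derive from (ii) and (iii) is secretly equivalent to the very conclusion you wish to contradict, and the final deduction is circular. (Your observation that a direct summand $\ker(s)$ of the torsion-free module $F_*\calK^+$ killed by some $\eta$ must vanish is correct, but since $\ker(s)\cong\coker(F)$ is never torsion unless already zero, this observation does no work under the theorem's hypotheses.)

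The iterative construction you sketch also fails to converge. Fixing $f\notin\calK^{+p}$, $p$-divisibility of the value group and perfectness of the residue field let you produce $g_n\in\calK^+$ with $|f_{n+1}|<|f_n|$ strictly, where $f_{n+1}=f_n-g_n^p$, and $f_n=f-(g_0+\cdots+g_{n-1})^p$ by the Freshman's dream. But the norms $|f_n|$ decrease only to $r_f:=\inf_{g\in\calK}|f-g^p|$, which is \emph{strictly positive} because $\calK^p$ is closed and $f\notin\calK^p$. If $|\eta|<r_f$ the threshold is never reached, and $|g_n|=|f_n|^{1/p}\ge r_f^{1/p}>0$, so $\sum g_n$ diverges and completeness gives you nothing. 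This stalling is exactly the point of the theorem. The paper's proof exploits it head-on: from (ii) and (iii) one shows the infimum $r_f$ is positive but \emph{never attained}; then, choosing $g_i$ with $r_i:=|f-g_i^p|\searrow r_f$ and $\lambda_i$ with $|\lambda_i^p|=r_i$, the element $(f-g_i^p)/\lambda_i^p$ lies in $\calK^+$, and applying the putative $\calK^{+p}$-linear splitting $\sigma$ (with $\sigma(1)=1$, so $\sigma(f-g_i^p)=\sigma(f)-g_i^p$) yields $|\sigma(f)-f|\le r_i$ for every $i$, hence $|\sigma(f)-f|\le r_f$; but $\sigma(f)\in\calK^p$, contradicting non-attainment of $r_f$. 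No almost mathematics is involved.
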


\begin{proof}
    Assume, by way of contradiction, that there exists a $\mathcal{K}^{+p}$-linear splitting $\sigma \colon \mathcal{K}^+ \to \mathcal{K}^{+p}$. Let $|\bigcdot|$ denote the norm on $\mathcal{K}$. For any $f \in \mathcal{K}^+ \setminus \mathcal{K}^{+p}$, define
\[
r_f := \inf \{ |f - g^p| : g \in \mathcal{K} \}.
\]
Since $\mathcal{K}^p$ is closed in $\mathcal{K}$, we have $r_f > 0$. Moreover, it follows directly from the definition that $r_f \leq |f|$.

We first claim that there does not exist any $g \in \mathcal{K}$ such that $|f - g^p| = r_f$. Suppose, to the contrary, that such a $g$ exists. By replacing $f$ with $f - g^p$, we may reduce to the case where $g = 0$ and $r_f = |f|$. Furthermore, since the value group of $\mathcal{K}$ is $p$-divisible, we can choose $h \in \mathcal{K}$ such that $|f| = |h^p|$. Replacing $f$ by $f/h^p$, we may further assume that $|f| = r_f = 1$. However, since $\mathcal{K}$ has a perfect residue field, there exists some $g \in \mathcal{K}^+$ such that $|f - g^p| < 1$, which contradicts the assumption that $r_f = 1$. This establishes our claim.

As an immediate corollary, we have $r_f < |f - 0^p| = |f|$. Consequently, we can choose a sequence of elements $g_1, g_2, \dotsc \in \mathcal{K}^+$ such that:
\begin{enumerate}[label=(\Roman*)]
    \item the quantities $r_i := |f - g_i^p|$ satisfy $r_i > r_{i+1}$ for all $i$;
    \item $\lim_{i \to \infty} r_i = r_f$.
\end{enumerate}

For each index $i$, choose $\lambda_i \in \mathcal{K}^+$ such that $|\lambda_i^p| = r_i$. Then we have
\[
\frac{f - g_i^p}{\lambda_i^p} \in \mathcal{K}^+.
\]
It follows that $|\sigma(f - g_i^p)| \leq r_i$ for all $i$. Thus, we obtain the estimate
\[
|\sigma(f) - f| \leq \max \{ |f - g_i^p|, |\sigma(f) - g_i^p| \} = r_i
\]
for all $i$. Taking the limit as $i \to \infty$, we conclude that $|\sigma(f) - f| \leq r_f$. On the other hand, since $\sigma(f)\in\calK^p$, $|f-\sigma(f)|>r_f$. This contradiction completes the proof.
\end{proof}

\begin{rmk}
    We note that the above theorem has already been established in \cite[Section 5, (14)]{Datta_2021}. However, the proof presented here differs slightly from the one given in that reference.
\end{rmk}

\subsection{Proof of Theorem \ref{main theo in intro}}\label{pr to main 1}

In this subsection, let \( k \) be an algebraically closed complete non-Archimedean field whose residue field has characteristic \( p \). We denote by \( k^\flat \) the tilt of \( k \), by \( \mathfrak{o} \) the ring of integers of \( k \), and by \( \mathfrak{m} \) the maximal ideal of \( \mathfrak{o} \). Fix a nonzero element \( \varpi \in \mathfrak{m} \), and fix an element \( \varpi^\flat \in k^\flat \) whose untilt is \( \varpi \). Let \( A_{\inf} = A_{\inf}(\mathfrak{o}) \) be Fontaine's period ring, and let \( \theta: A_{\inf} \to \mathfrak{o} \) be Fontaine's map.

Let \( \mathcal{K}/k \) be a one-dimensional analytic field (cf. Definition \ref{defi:1 dim an field}).

\begin{theo}\label{theo: type 23 has delta lifting}
	Assume that $\mathcal{K}$ is Abhyankar (recall that this is equivalent to $\calK$ being type 2 or type 3). Then $\mathcal{K}^+$ has a flat $\delta$-lift over $\Ainf$.
\end{theo}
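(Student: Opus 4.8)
The plan is to reduce, via Temkin's uniformization, to the case where $\mathcal{K}$ is the completed residue field $\mathcal{H}(v)$ at a type~2 or type~3 point $v$ of $\dA^{1,\an}_k$ --- i.e.\ $v$ is a Gauss norm on a ball --- to construct the $\delta$-lift explicitly there, and to transport the conclusion back along a finite \'etale extension. Concretely: by Theorem~\ref{theo:uniformization} fix $z\in\mathcal{K}\setminus k$ with $\widehat{k(z)}^+\to\mathcal{K}^+$ finite \'etale, where $\widehat{k(z)}$ is the closure of $k(z)$ in $\mathcal{K}$. As $k$ is algebraically closed and $z\notin k$, $z$ is transcendental, so $\widehat{k(z)}=\mathcal{H}(v)$ for a zero-support point $v\in\dA^{1,\an}_k$; a finite \'etale extension of valuation rings is unramified and $\widetilde k$ is algebraically closed, so additivity of the Abhyankar inequality along the finite extension $\mathcal{K}/\widehat{k(z)}$ shows $\widehat{k(z)}$ is again Abhyankar, hence (Definition~\ref{defi: Berkovich classify 1 dim geo rings}, Theorem~\ref{theo: type coincide}) $v$ is a Gauss norm on a ball of radius $r\in|k^\times|$ (type~2) or $r\notin|k^\times|$ (type~3). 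Granting a flat $\delta$-lift $(\widetilde{\mathcal{H}(v)^+},\xi)$ of $\mathcal{H}(v)^+$ over $\Ainf$: being flat and derived $(p,\xi)$-complete it is classically $(p,\xi)$-complete, so $(\widetilde{\mathcal{H}(v)^+},(p,\xi))$ is a henselian pair and the finite \'etale sites of $\widetilde{\mathcal{H}(v)^+}$, of $\mathcal{H}(v)^+$ and of $\mathcal{H}(v)^+/p$ all coincide; let $\widetilde{\mathcal{K}^+}$ be the finite \'etale $\widetilde{\mathcal{H}(v)^+}$-algebra corresponding to $\mathcal{K}^+$, so $\widetilde{\mathcal{K}^+}/\xi\cong\mathcal{K}^+$. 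It carries a unique compatible $\delta$-structure (\'etale algebras over $\delta$-rings do, see \cite{Bhatt_2022}), is $(p,\xi)$-completely flat over $\Ainf$ (finite flat over $\widetilde{\mathcal{H}(v)^+}$), and $(\widetilde{\mathcal{K}^+},(\xi))$ is a bounded prism lifting $\mathcal{K}^+$. So it remains to treat the two base cases.

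\textbf{Type~2.} An affine automorphism of $\dA^1_k$ reduces us to $v$ the Gauss point of the unit disc, $\mathcal{H}(v)=\widehat{k(t)}$ for the Gauss norm. Using that a rank one valuation ring has, for each polynomial, a coefficient of minimal valuation, one checks that $V:=\mathfrak{o}[t]_{\mathfrak{m}\mathfrak{o}[t]}$ is a valuation ring with fraction field $k(t)$, whence (Corollaries~\ref{cor: set of psu}, \ref{cor: val field is O[1/pi] and val topology is pi-adic}) $\mathcal{H}(v)^+$ is the $p$-adic completion of $V$. Now $\Ainf=W(\mathfrak{o}^\flat)$ is local with maximal ideal $\mathfrak{m}_{\inf}=\theta^{-1}(\mathfrak{m})$, and $\phi$ is an automorphism of $\Ainf$ fixing $\mathfrak{m}_{\inf}$ and inducing the Frobenius of $\widetilde k=\Ainf/\mathfrak{m}_{\inf}$. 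Equip $\Ainf[t]$ with $\delta(t)=0$; reduction modulo $\mathfrak{m}_{\inf}$ identifies $\Ainf[t]\setminus\mathfrak{m}_{\inf}\Ainf[t]$ with the preimage of $\widetilde k[t]\setminus\{0\}$, a set stable under the injective endomorphism $\phi$ induces on $\widetilde k[t]$, so this multiplicative set is $\phi$-stable and its localization is a $\delta$-ring. Its $(p,\xi)$-adic completion $\widetilde V$ is $(p,\xi)$-completely flat over $\Ainf$ (localization and completion preserve flatness); moreover $\widetilde V/\xi$ is the $p$-adic completion of $\Ainf[t]_{\mathfrak{m}_{\inf}\Ainf[t]}/\xi=\mathfrak{o}[t]_{\mathfrak{m}\mathfrak{o}[t]}$, i.e.\ $\mathcal{H}(v)^+$, and $(\widetilde V,(\xi))$ is a bounded prism.

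\textbf{Type~3.} Translating and rescaling, take $v$ the Gauss point of the ball of radius $r$ about $0$ with $0<r<1$ and $r\notin|k^\times|$; then $\mathcal{H}(v)^+$ is the $p$-adic completion of the rank one valuation ring $V=\{f\in k(t):v(f)\ge0\}$ with $|t|=r$. Unlike in type~2, $V$ is not a localization of $\mathfrak{o}[t]$, and the key point is to present it as a filtered union of tractable subrings. Since $|k^\times|$ is dense and $\mathfrak{o}^\flat$ has the same value group, choose $a_n^\flat,b_n^\flat\in\mathfrak{o}^\flat$ with $|a_n^\flat|\downarrow r$ and $|b_n^\flat|\uparrow r$, set $a_n:=(a_n^\flat)^\sharp$, $b_n:=(b_n^\flat)^\sharp\in k^\times$, and let $V_n:=\mathfrak{o}[t,t/a_n,b_n/t]\cong\mathfrak{o}[s,w]/(sw-b_n/a_n)$ (with $s=t/a_n$, $w=b_n/t$), an $\mathfrak{o}$-flat (the relation is nonzero, hence a nonzerodivisor, modulo every prime of $\mathfrak{o}$) subalgebra of $V$. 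A direct non-Archimedean approximation argument --- expanding polynomials as (minimal-valuation coefficient)$\times$(unit) and using $r\notin|k^\times|$ (so $\mathbb{Z}\log r\cap\log|k^\times|=0$) to exclude a balanced degenerate case --- shows $V=\bigcup_n(V_n)_{\mathfrak{q}_n}$ with $\mathfrak{q}_n:=\{v>0\}\cap V_n$. To lift, write $[\,\cdot\,]$ for the Teichm\"uller lift and put $\widetilde V_n^{\,0}:=\Ainf[s,w]/(sw-[b_n^\flat/a_n^\flat])$ with $\delta(s)=\delta(w)=0$: since $\delta(sw)=\delta([b_n^\flat/a_n^\flat])$ holds (both sides vanish) this is a $\delta$-ring, flat over $\Ainf$ (same nonzerodivisor argument), with $\widetilde V_n^{\,0}/\xi=V_n$. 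The maps $s\mapsto[a_{n+1}^\flat/a_n^\flat]s'$, $w\mapsto[b_n^\flat/b_{n+1}^\flat]w'$ are $\delta$-ring homomorphisms lifting $V_n\hookrightarrow V_{n+1}$. Localizing each $\widetilde V_n^{\,0}$ at the preimage of $\mathfrak{q}_n$ (again $\phi$-stable, by the Frobenius-on-$\widetilde k$ argument), passing to the filtered colimit, and $(p,\xi)$-completing produces $\widetilde V$, which is $(p,\xi)$-completely flat over $\Ainf$, has $\widetilde V/\xi\cong\mathcal{H}(v)^+$, and is a bounded prism.

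\textbf{Main obstacle.} The hard part is the type~3 step: proving that the valuation ring of a type~3 point is the filtered union of the essentially-of-finite-type-over-$\mathfrak{o}$ subrings $(V_n)_{\mathfrak{q}_n}$ --- this is exactly where the non-Archimedean approximation and the hypothesis $r\notin|k^\times|$ enter --- and keeping every inverted multiplicative set $\phi$-stable so the $\delta$-structures descend through the localizations and the colimit. The remaining ingredients --- that $\mathfrak{o}[t]_{\mathfrak{m}\mathfrak{o}[t]}$ and each $(V_n)_{\mathfrak{q}_n}$ are valuation rings, the identifications of the $\xi$-fibers with the stated completions, the henselian-pair/\'etale-lifting bookkeeping in the reduction, and the bounded-prism axioms (using only that $\mathcal{K}^+$ is a $p$-torsion-free domain and that the relevant conditions descend from $(\Ainf,(\xi))$) --- are routine.
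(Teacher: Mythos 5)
Your proof is correct and follows essentially the same route as the paper's: Temkin's uniformization plus deformation theory along the finite \'etale map reduces to the Gauss-point case, after which type~2 is handled by $(p,\xi)$-completing a localization of $\Ainf[t]$ at $\mathfrak{m}_{\inf}\Ainf[t]$ with $\delta(t)=0$, and type~3 by realizing the valuation ring as a filtered union of localized hyperbola algebras $\mathfrak{o}[x,y]/(xy-c)$ and lifting these to $\Ainf[x,y]/(xy-[c^\flat])$ with $\delta(x)=\delta(y)=0$ before passing to the colimit and completing. The only differences are cosmetic: you verify $\phi$-stability of the inverted multiplicative sets directly, and cite additivity of the Abhyankar inequality rather than the paper's reference to \cite[Proposition 2.3.3]{Datta_2016}, but the constructions are the same.
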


\begin{proof}
	By the Theorem \ref{theo:uniformization}, choose a uniformization $t\in \calK$. Let $\calE_0=k(t)$ and $\calE$ be the closure of $\calE_0$ in $\calK$.
    
	Since the embedding $\mathcal{E}^+ \to \mathcal{K}^+$ is finite \'etale, basic deformation theory implies that if $\mathcal{E}^+$ admits the required $\delta$-lift, then so does $\mathcal{K}^+$. Moreover, by \cite[Proposition 2.3.3]{Datta_2016} if $\mathcal{K}$ is of type 2 (resp. type 3), then $\mathcal{E}$ is also of type 2 (resp. type 3). We now observe that the norm on $\mathcal{E}_0$ restricted from $\mathcal{K}$ (denoted by $\|\bigcdot\|$) is a Gauss norm. Without loss of generality, we may assume that $0$ is a center of $\|\bigcdot\|$ and that $r$ denotes its radius.

    We now divide the argument into two cases, depending on the type of $\calE$.
	
	If $r \in |k^\times|$ ($\calE$ is of type 2), assume that $a \in k$ such that $|a| = r$.     
    Via the transformation $t \mapsto t/a$, we may normalize so that $a = 1$. Then $\mathcal{E}_0^+$ is equal to the localization of $\mathfrak{o}[t]$ at the prime ideal $\mathfrak{m} \mathfrak{o}[t]$. Now consider the polynomial algebra $\Ainf[t]$ and its ideal $P = \theta[t]^{-1}(\frakm\frako[t])$, where $$\theta[t]: \Ainf[t] \to \mathfrak{o}[t]$$ is the homomorphism obtained by extending $\theta$ and sending $t$ to $t$. 
    At this point, take $\widetilde{\mathcal{E}}^+$ to be the $(p,[\varpi^{\flat}])$-adic completion of the localization $\Ainf[t]_P$. Then it is a $p$-torsion free ring, and define a $\delta$-structure on it by extending the $\delta$-structure on $\Ainf$ via $\delta(t)=0$ (cf. \cite[Lemma 2.15]{Bhatt_2022}).

    If $r \notin |k^\times|$ ($\calE$ is of type 3), then take two sequences $\{z_i: i \ge 1\}$ and $\{w_i: i \ge 1\}$ in $k^\flat$ satisfying:
    \begin{itemize}
    \item $|z_i^\sharp| > |z_{i+1}^\sharp| > r > |w_{i+1}^\sharp| > |w_i^\sharp|$;
    \item $\lim_{i\to \infty}|z^\sharp_i| = \lim_{i\to \infty}|w^\sharp_i| = r$.
    \end{itemize}
    For each $i$, let
\[
R_i = \mathfrak{o}[x_i, y_i]/(x_iy_i - w_i^\sharp {z_i^{\sharp\ -1}})
\]
and let $\mathfrak{p}_i$ be the prime ideal generated by $x_i, y_i, \mathfrak{m}$. Then $(R_i)_{\mathfrak{p}_i}$ can be regarded as a subring of $\mathcal{E}^+_0$ via the map sending $x_i$ to $\frac{w_i^\sharp}{t}$. For any $i$, define an $\mathfrak{o}$-algebra homomorphism from $(R_{i})_{\mathfrak{p}_i}$ to $(R_{i+1})_{\mathfrak{p}_{i+1}}$ by
\[
x_i \mapsto w_i^{\sharp}{w_{i+1}^{\sharp\ -1}}x_{i+1};\quad y_i \mapsto {z_i^{\sharp\ -1}}z_{i+1}^\sharp y_{i+1}.
\]
We have (see Appendix \ref{appd:A})
\[
\mathcal{E}_0^+ = \varinjlim_{i}\ (R_i)_{\mathfrak{p}_i}.
\]

For any $i$, define
\[
\widetilde{R}_i = \Ainf[x_i, y_i] / (x_i y_i - [w_i z_i^{-1}])
\]
and let $\widetilde{\mathfrak{p}}_i$ be the prime ideal of $\widetilde{R}_i$ generated by $x_i$, $y_i$, and the maximal ideal of $\Ainf$. Define an $\Ainf$-algebra homomorphism $\widetilde{R}_i \to \widetilde{R}_{i+1}$ by sending $x_i$ to $[w_i][w_{i+1}]^{-1} x_{i+1}$ and $y_i$ to $[z_{i+1}][z_i]^{-1} y_{i+1}$. Let
\[
\widetilde{\mathcal{E}}^+ = \left( \varinjlim_i\ (\widetilde{R}_i)_{\widetilde{\mathfrak{p}}_i} \right)^\wedge
\]
where the completion is the $(p, [\varpi^\flat])$-adic completion.

We claim that there exists a $\delta$-structure on $\widetilde{\calE}^+$ extending the $\delta$-structure on $\Ainf$ such that $\delta(x_i)=\delta(y_i)=0$ for all $i$. Indeed, since $\widetilde{R}_i$ is free as an $\Ainf$-module, we have that
\[\widetilde{\calE}^+_0:=\varinjlim_i (\widetilde{R}_i)_{\widetilde{\mathfrak{p}}_i}\]
is flat over $\Ainf$. In particular, $\widetilde{\calE}_0^+$ is $p$-torsion free. Hence, to give a $\delta$-structure on $\widetilde\calE_0^+$, it suffices to give a Frobenius lifting on $\widetilde{\calE}^+_0$. It is easy to check that there exists Frobenius lifting on $\widetilde{\calE}_0^+$ extending the Witt vector Frobenius on $\Ainf$ and sending each $x_i$ to $x_i^p$, $y_i$ to $y_i^p$. Hence, there exists a $\delta$-structure on $\widetilde{\calE}_0^+$ extending the $\delta$-structure on $\Ainf$ such that $\delta(x_i)=\delta(y_i)=0$ for all $x_i$ and $y_i$. By \cite[Lemma 2.17]{Bhatt_2022}, we can extend this $\delta$-structure to $\widetilde{\calE}^+$.
\end{proof}

\begin{lemma}\label{lemma: dim of dif type 4}
    Let $\calK$ be of type 4. Then, the linear space $\widehat{\Omega}_{\calK^+/\frako}[\frac{1}{\varpi}]$ over $\calK$ is of dimension $1$.
\end{lemma}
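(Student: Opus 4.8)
The plan is to reduce, via Temkin's uniformization, to the case where $\calK$ is the completion of a rational function field $k(t)$, and then to realize $\widehat{\Omega}_{\calK^+/\frako}$ as the $\varpi$-adic completion of a \emph{bounded} fractional ideal of $k(t)^+$; once boundedness is established, the rank-one statement is formal.

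\textbf{Reduction.} By Theorem~\ref{theo:uniformization} choose a uniformization $t\in\calK$, and after rescaling $t$ by an element of $k^{\times}$ assume $t\in\calK^+$. Let $\calE\subseteq\calK$ be the closure of $k(t)$, so that $\calE^+\to\calK^+$ is finite \'etale and $\calK/\calE$ is a finite extension of fields. Since $\widehat{\mathbb{L}}_{\calK^+/\calE^+}=0$, the transitivity triangle for $\frako\to\calE^+\to\calK^+$ combined with Theorem~\ref{theo: cpt diff mod of val rings} (note $\frako$ is perfectoid, so $\widehat{\mathbb{L}}_{\calE^+/\frako}$ is a flat $\calE^+$-module in degree $0$) gives $\widehat{\Omega}_{\calK^+/\frako}\cong\widehat{\Omega}_{\calE^+/\frako}\otimes_{\calE^+}\calK^+$. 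Inverting $\varpi$ and using that $\calK/\calE$ is finite, it suffices to prove $\dim_{\calE}\widehat{\Omega}_{\calE^+/\frako}[\tfrac1\varpi]=1$. As $t$ is transcendental over $k$, the valuation of $\calE$ restricts on $k(t)$ to a point $v$ of $\dA^{1,\an}_k$ not of type~$1$; write it as a limit $v=\lim_j\|\bigcdot\|_{B_j}$ of multiplicative Gauss norms attached to a nested family of closed balls $B_j=B_k(x_j,r_j)$ (a single ball if $v$ is of type $2$ or $3$). One has $r:=\lim_j r_j>0$: indeed, if $r=0$ the centres $x_j$ would be Cauchy and their limit would lie in $\bigcap_j B_j$, contradicting type~$4$.

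\textbf{A fractional-ideal model, and boundedness.} Let $\calE_0^+=k(t)\cap\calE^+=k(t)^+$, a valuation ring with fraction field $k(t)$ whose $\varpi$-adic completion is $\calE^+$. By Theorem~\ref{theo: calculate dif module by not complete}, $\widehat{\Omega}_{\calE^+/\frako}$ is the $\varpi$-adic completion of $\Omega_{\calE_0^+/\frako}$, and the latter is flat (hence $\varpi$-torsion-free) of rank one over $\calE_0^+$, since $\Omega_{\calE_0^+/\frako}\otimes_{\calE_0^+}k(t)=\Omega_{k(t)/k}=k(t)\,\dif t$. Thus, inside $k(t)\,\dif t$, write $\Omega_{\calE_0^+/\frako}=J\,\dif t$ with $J\subseteq k(t)$ the $\calE_0^+$-submodule generated by $\{\dif f/\dif t:f\in\calE_0^+\}$; note $\calE_0^+\subseteq J$ because $\dif(t)=\dif t$. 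The key point is the estimate $v(g')\le r^{-1}v(g)$ for all $g\in k(t)$, where $g'=\dif g/\dif t$: for each $j$ the Gauss norm $\|\bigcdot\|_{B_j}$ is multiplicative and satisfies $\|h'\|_{B_j}\le r_j^{-1}\|h\|_{B_j}$ on polynomials (immediate from $\|\sum_i a_i(t-x_j)^i\|_{B_j}=\max_i|a_i|r_j^i$ together with $|i|\le1$), hence on all of $k(t)$ by the quotient rule, and one lets $j\to\infty$. Consequently $J$ lies in the bounded submodule $\{g\in k(t):v(g)\le r^{-1}\}$, so choosing $N$ with $|\varpi|^N\le r$ yields $\varpi^N\calE_0^+\subseteq\varpi^N J\subseteq\calE_0^+$.

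\textbf{Conclusion, and the main obstacle.} Completing the short exact sequence $0\to\varpi^N J\to\calE_0^+\to Q\to 0$ (the first two terms flat, so their classical and derived $\varpi$-completions agree; $Q$ is killed by $\varpi^N$, hence already derived complete) identifies $\varpi^N\widehat{J}=\widehat{\varpi^N J}$ with $\ker(\calE^+\to Q)$, which is squeezed between $\varpi^N\calE^+$ and $\calE^+$. Inverting $\varpi$ gives $\widehat{J}[\tfrac1\varpi]=\calE$, so $\widehat{\Omega}_{\calE^+/\frako}[\tfrac1\varpi]=\widehat{J}[\tfrac1\varpi]\,\dif t$ is one-dimensional over $\calE$, and the reduction step finishes the proof. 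I expect the derivative estimate in the previous paragraph to be the heart of the matter, though it becomes routine once one exploits that a type~$4$ (or type~$2$/$3$) seminorm on $k(t)$ is a limit of genuine multiplicative Gauss norms, each controlling $g\mapsto g'$ by the reciprocal of its radius; a secondary point requiring care is the finite-\'etale descent at the level of completed cotangent complexes in the reduction step. Note also that the argument uses nothing about type~$4$ beyond the fact that the associated point of $\dA^{1,\an}_k$ is not of type~$1$.
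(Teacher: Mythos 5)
Your proof is correct, and while it shares the paper's overall strategy (reduce to $\calE=\overline{k(t)}$ via Temkin uniformization and finite \'etale descent, then show $\widehat{\Omega}_{\calE^+/\frako}$ is squeezed between $\calE^+\dif t$ and $\varpi^{-N}\calE^+\dif t$), the core step runs on a genuinely different engine. The paper builds the auxiliary model $R_\infty^+=\varinjlim_i\frako\langle\tfrac{t-x_i}{z_i}\rangle$, proves several sublemmas about it (that the colimit $R_\infty$ is a field, that Gauss norms $\|f\|_{i'}$ stabilize, that $R_\infty^+$ is dense in $\calE^+$), applies Theorem~\ref{theo: calculate dif module by not complete} to it, and then bounds the kernel and cokernel of a map $\gamma_N$ from a free rank-one module by a fixed power of $\varpi$. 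You bypass all of that: you work directly with the honest valuation ring $\calE_0^+=k(t)^+$ — which has the technical advantage that Theorem~\ref{theo: calculate dif module by not complete} applies verbatim, whereas $R_\infty^+$ is not a valuation ring, so the paper's invocation of that theorem is not literally covered by its hypotheses (though the proof clearly extends) — and you extract the required boundedness from the clean derivative estimate $v(g')\le r^{-1}v(g)$ for Gauss seminorms of radius $\ge r$. This estimate is precisely the hidden mechanism behind the paper's annihilator bound, and isolating it makes the argument shorter, more elementary, and visibly uniform across types 2, 3, and 4 (type 4 being used only to secure $r>0$ via the nonconvergence of centers). Your finite-\'etale reduction via the transitivity triangle and the completion bookkeeping at the end (completing $0\to\varpi^N J\to\calE_0^+\to Q\to 0$ and using that $Q$ is $\varpi^N$-torsion, hence derived complete) are also correct.
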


\begin{proof}
    Choose a uniformization $z\in\calK$. As $\overline{k(z)}^+\subseteq \calK^+$ is finite \'etale, it suffices to prove that $\widehat{\Omega}_{\overline{k(z)}^+/\frako}[\frac{1}\varpi]$ is of dimension $1$ over $\overline{k(z)}$. Denote by $\calE_0$ the field $k(z)\subseteq \calK$ with the restriction norm and $\widehat\calE_0$ be its completion. Obviously, $\overline{k(z)}$ is identified with $\widehat\calE_0$

    According to \cite[Proposition 2.3.3]{Datta_2016}, the completion $\widehat{\mathcal{E}}_0$ is of type 4. This allows us to construct two sequences of elements: $\{x_i : i = 1, 2, \dots\} \subset k$ and $\{z_i : i = 1, 2, \dots\}$ satisfying the following conditions:
\begin{enumerate}[label=(\alph*)]
    \item $|x_i - x_{i+1}| \leq |z_i|$ and $|z_i| > |z_{i+1}|$;
    \item $\bigcap_{i \geq 1} B_k(x_i, |z_i|) = \emptyset$;
    \item The norm $\|\cdot\|$ on $\mathcal{E}_0$ equals the limit of the Gauss norms $\|\bigcdot\|_i$ with centers $x_i$ and radii $|z_i|$.
\end{enumerate}
    Define $R_i$ (resp. $R_i^+$) as the Tate algebra
    \[R_i=k\langle\frac{t-x_i}{z_i}\rangle\ \text{ (resp. $\frako\langle\frac{t-x_i}{z_i}\rangle$)}.\]
    By definition, $\|\frac{t-x_i}{z_i}\|\leq 1$, thus there exists a canonical continuous homomorphism $R_i\to \widehat{\calK}_0$ sending $t$ to $t$ and $R_i^+$ to $\widehat\calE_0^+$. Hence, these maps induce a canonical homomorphism $\rho$ (resp. $\rho^+$) from $R_\infty:=\varinjlim_i\ R_i$ (resp. $R_\infty^+:=\varinjlim_i\ R_i^+$) to $\widehat{\calK}_0$ (resp. $\widehat{\calK}_0^+$). By abuse of notation, for any $i\leq i'$ and $f\in R_i$, we denote by $\|f\|_{i'}$ the Gauss norm of $f$ with center $x_{i'}$ and radius $|z_{i'}|$. Furthermore, for any $i$ and $f\in R_i$, denote $\|\rho(f)\|$ by $\|f\|$.

    Since every quotient of $R_i$ by a nonzero prime ideal is isomorphic to $k$, the homomorphism $\rho$ is injective. Consequently, $\rho^+$ is also injective. We now establish the following key properties:
\begin{enumerate}[label=(\alph*)]
    \item $R_\infty$ is a field;
    \item $R_\infty^+ = \{f \in R_\infty : \|f\| \leq 1\}$.
\end{enumerate}

    To prove (a), let $0 \neq f \in R_i$. Since $\bigcap_{j} B_k(x_j, |z_j|) = \emptyset$, we can choose an index $i' > i$ such that the ball $B_k(x_{i'}, |z_{i'}|)$ contains no zeros of $f$. This implies that $f$ is invertible in $R_{i'}$, which establishes that $R_\infty$ is a field.

    For (b), let $f \in R_i$. We claim that there exists an index $i' \geq i$ such that $\|f\| = \|f\|_{i'}$. To establish this claim, we will need the following lemma.

    \begin{lemma}\label{lemma: lemma in the lemma of proving dif dim=1}
        For any index $i$ and any nonzero element $f \in R_i$, there exists an integer $N \geq 1$ such that for all $i' \geq i$,
\[
\|f\|_{i'} > \|\varpi^N\|_{i'} = |\varpi^N|.
\]
    \end{lemma}
    \begin{proof}
        We proceed by contradiction. Suppose, to the contrary, that for every positive integer $N$, there exists some index $i' \geq i$ such that $\|f\|_{i'} \leq |\varpi^N|$. This would imply that $f \in \varpi^N R_{i'}^+$ for every $N$, and consequently $\rho(f) \in \varpi^N \widehat{\mathcal{E}}_0^+$ for all $N$. It would then follow that $\rho(f) = 0$, contradicting the injectivity of $\rho$.
    \end{proof}
    Building upon this lemma, we proceed with the main argument. By Lemma \ref{lemma: lemma in the lemma of proving dif dim=1}, we may choose an integer $N \geq 1$ such that $\|\varpi^N\| < \min\{\|f\|, \|f\|_{i'}\}$ holds for all $i' \geq i$. There exists an element $g \in \varpi^N R_i^+$ such that the difference $f - g$ lies in $k[t]$. Since $\varpi^{-N}g \in R_i^+$, we have $\rho(\varpi^{-N}g) \in \widehat{\mathcal{K}}_0^+$, and therefore $\|g\| \leq \|\varpi^N\| < \|f\|$. This inequality implies that
\[
\|f\| = \|f - g\|.
\]
A similar argument shows that $\|f - g\|_{i'} = \|f\|_{i'}$ for all $i' \geq i$.

    We now analyze the structure of $f - g$. Since $f - g \in k[t]$, we may factor it as $f - g = A(t - \lambda_1)(t - \lambda_2) \cdots (t - \lambda_r)$, where $A$ and all $\lambda_u$ are elements of $k$. Choose an index $i' > i$ such that for every $u$, the point $\lambda_u$ lies outside the closed ball $B_k(x_{i'}, |z_{i'}|)$. By definition, we then have
\[
\|\lambda_u - t\| = \|\lambda_u - t\|_{i'}
\]
for every $u$. It follows that $\|f - g\|_{i'} = \|f - g\|$, which establishes the claim.

    Since $k[z] \subseteq R_\infty$ and $R_\infty$ is a field, we have $\mathcal{E}_0 = k(z) \subseteq R_\infty$. This inclusion implies that $R_\infty$ is dense in $\widehat{\mathcal{K}}_0$. Consequently, the $\varpi$-adic completion of $R_\infty^+$ is $\widehat{\mathcal{K}}_0^+$. By Theorem \ref{theo: calculate dif module by not complete}, the complete differential module $\widehat{\Omega}_{\widehat{\mathcal{E}}_0/\mathfrak{o}}$ is isomorphic to the $\varpi$-adic completion of the usual differential module $\Omega_{R_\infty^+/\mathfrak{o}}$.

    For any integer $N \geq 1$, we have the isomorphism
\[
\Omega_{R_{\infty}^{+}/\mathfrak{o}} \otimes_{\mathfrak{o}} (\mathfrak{o}/\varpi^N) = \varinjlim_i \ \Omega_{(R_i^{+}/\varpi^N)/(\mathfrak{o}/\varpi^N)},
\]
which we denote by $\mathcal{D}_N$. 
    
    We define $\mathcal{D}_N'$ as the free module of rank 1 over $R_{\infty}^{+}/\varpi^N$ with basis element $e$. Consider the $R_{\infty}^{+}/\varpi^N$-linear homomorphism 
\[
\gamma_N: \mathcal{D}_N' \to \mathcal{D}_N
\]
defined by sending $e$ to $\dif\left(\frac{t - x_i}{z_i}\right)$. 

We claim that there exists a positive integer $M$, independent of $N$, such that both $\ker(\gamma_N)$ and $\operatorname{coker}(\gamma_N)$ are annihilated by $\varpi^M$.

To verify this claim, we first observe that by definition, for each index $i$, the module $\Omega_{(R_i^{+}/\varpi^N)/(\mathfrak{o}/\varpi^N)}$ is free over $R_i^{+}/\varpi^N$ with basis $\dif\left(\frac{t - x_i}{z_i}\right)$. 

Moreover, for any indices $i \leq i'$, we have the relation
\[
\dif\left(\frac{t - x_i}{z_i}\right) = z_{i}^{-1}z_{i'} \dif\left(\frac{t - x_{i'}}{z_{i'}}\right)
\]
in $\Omega_{(R_{i'}^{+}/\varpi^N)/(\mathfrak{o}/\varpi^N)}$. This implies that in the module $\Omega_{(R_i^{+}/\varpi^N)/(\mathfrak{o}/\varpi^N)}$, we have
\[
z_{i}^{-1}z_{i'} \cdot \operatorname{Ann}_{R_i^{+}/\varpi^N}\left(\dif\left(\frac{t - x_i}{z_i}\right)\right) = 0.
\]

Therefore, if we choose $M$ such that $|\varpi^M| \leq \lim_i |z_i||z_1|^{-1}$, then $\ker(\gamma_N)$ is annihilated by $\varpi^M$. A similar argument shows that $\operatorname{coker}(\gamma_N)$ is also annihilated by the same $\varpi^M$.

This result implies that for every $N$, there exists a canonical homomorphism $\gamma_N': \mathcal{D}_N \to \mathcal{D}_N'$ satisfying
\[
\gamma_N \circ \gamma_N' = \varpi^{2M} \quad \text{and} \quad \gamma_N' \circ \gamma_N = \varpi^{2M}.
\]

Taking the projective limit over all $N$, we find that $\varprojlim_N \gamma_N$ becomes invertible after inverting $\varpi$. This establishes that
\[
\widehat{\Omega}_{\mathcal{E}_0/\mathfrak{o}}[\tfrac{1}{\varpi}]
\]
is one-dimensional, which completes the proof.
\end{proof}

\begin{proof}[Poof of Theorem \ref{main theo in intro} in characteristic $p$]
    Assume $k$ is of characteristic $p$. We need to prove the following are all equivalent:
    \begin{enumerate}[label=(\alph*)]
        \item $\calK^+$ has a flat $\delta$-lifting over $\Ainf$;
        \item $\calK^+$ is $F$-split;
        \item $\calK$ is not of type $4$.
    \end{enumerate}
    By Theorem \ref{theo: f lift to f split} and Theorem \ref{theo: type 23 has delta lifting}, it suffices to show that when $\calK$ is of type 4, $\calK^+$ is not $F$-split.

    By Theorem \ref{theo: non f split}, this is equal to show that $\calK$ is not perfect. This is a direct corollary of Lemma \ref{lemma: dim of dif type 4}.
\end{proof}

Now we turn to mixed characteristic case. 

From now on, we assume more that $k$ is of mixed characteristic $(0,p)$. Let $\frako^\flat$ be the valuation ring of $k^\flat$. Fix an element $p^\flat\in\frako^\flat$ whose untilt is $p$. Then $\xi:=p-[p^\flat]$ be a generator of $\ker\theta$.

\begin{lemma}\label{lemma: value group no extension}
    Let $E \subseteq F$ be an extension of complete non-archimedean fields. Fix a pseudo-uniformizer $\pi \in E$, and define $i_F$ as the set of invertible elements of $F/\pi$.

The following statements are equivalent:
\begin{enumerate}[label=(\roman*)]
    \item The value group of $E$ coincides with the value group of $F$.
    \item For every $\bar{f} \in F^+/\pi$, there exist $\bar{a} \in E^+/\pi$ and $\bar{u} \in i_F$ such that $\bar{f} = \bar{a}\bar{u}$.
\end{enumerate}
\end{lemma}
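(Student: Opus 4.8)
The plan is to translate both conditions into statements purely about the value groups $|E^\times|$ and $|F^\times|$, via two elementary facts about the ring $F^+/\pi F^+$ (whose group of units is $i_F$; recall that a pseudo-uniformizer forces $0 < |\pi| < 1$). First I would record the dictionary: for $f \in F^+$, one has $\bar f = 0$ in $F^+/\pi$ iff $|f| \le |\pi|$, so a nonzero class has a well-defined value $|f| \in (|\pi|, 1]$; and $\bar f \in i_F$ iff $|f| = 1$ --- indeed $fg \equiv 1 \pmod{\pi}$ forces $|fg| = 1$, hence $|f| = |g| = 1$ since both are $\le 1$, while conversely $|f| = 1$ makes $f$ a unit already in $F^+$. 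The same statements hold verbatim for $E$, and the inclusion $E^+ \subseteq F^+$ induces a ring map $E^+/\pi \to F^+/\pi$ (as $\pi E^+ \subseteq \pi F^+$), so the product $\bar a\,\bar u$ appearing in (ii) is meaningful.

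For (i) $\Rightarrow$ (ii): given $\bar f \in F^+/\pi$, dispose of $\bar f = 0$ by taking $\bar a = 0$ and $\bar u = 1$; otherwise lift to $f \in F^+$ with $|\pi| < |f| \le 1$. Since $|f| \in |F^\times| = |E^\times|$, pick $a \in E^\times$ with $|a| = |f|$; then $|\pi| < |a| \le 1$ puts $a$ in $E^+$ with $\bar a \neq 0$, and $u := f/a$ satisfies $|u| = 1$, so $\bar u \in i_F$ and $\bar f = \bar a\,\bar u$.

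For (ii) $\Rightarrow$ (i): the inclusion $|E^\times| \subseteq |F^\times|$ is automatic, so it suffices to prove the reverse one. Given $\gamma \in |F^\times|$, multiply by the unique power $|\pi|^m$ (whose value lies in $|E^\times|$) that brings $\gamma|\pi|^m$ into $(|\pi|, 1]$, and take $f \in F^+$ with $|f| = \gamma|\pi|^m$, so $\bar f \neq 0$. Apply (ii) to write $\bar f = \bar a\,\bar u$ and lift to $a \in E^+$, $u \in F^+$ with $au - f \in \pi F^+$ and $|u| = 1$. Then $|au - f| \le |\pi| < |f|$, so the ultrametric inequality gives $|f| = |au| = |a|$; since $|a| = \gamma|\pi|^m > 0$, we get $a \in E^\times$, hence $\gamma = |a| \cdot |\pi|^{-m} \in |E^\times|$.

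The argument is entirely formal and short; the only part requiring genuine care is the bookkeeping in the direction (ii) $\Rightarrow$ (i) --- checking that the lifts of $\bar a$ and $\bar u$ can be chosen in $E^+$ and $F^+$ respectively, that $\bar a \neq 0$ (forced by $\bar f \neq 0$) so that $|a|$ truly lies in the value group, and that an arbitrary $\gamma$ can be normalized into the window $(|\pi|, 1]$ using powers of $\pi$. I do not expect any substantive obstacle beyond this.
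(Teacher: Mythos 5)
Your proof is correct and follows essentially the same route as the paper's: both directions reduce to the observation that a class in $F^+/\pi$ is a unit exactly when any lift has norm $1$, and the direction (ii) $\Rightarrow$ (i) in both cases normalizes a value $\gamma \in |F^\times|$ by a power of $\pi$ into the window $(|\pi|,1]$ and then reads off $|a|=|f|$ from the ultrametric inequality (the paper phrases this step as $f_1 = a(u + a^{-1}\pi g)$ with $u + a^{-1}\pi g$ a unit, which is the same computation). Your version is slightly more careful in tracking the $\bar f = 0$ case and the normalization exponent, but the substance is identical.
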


\begin{proof}
    Denote by $I_F$ the set of invertible elements in $F^+$. Then $i_F$ is the projection of $I_F$ in $F^+/\pi$.

    Note that the value group of $E$ is equal to the value group of $F$ if and only if each $f\in F^+$ can be written as $f=au$ such that $a\in E^+$ and $u\in I_F$. Thus, the `only if' part is easy.

    Now assume that for any $\bar f\in F^+/\pi$ can be written as $au$ for some $\bar a\in E^+/\pi$ and $\bar u\in i_F$. Let $f\in F^+$. Since $\pi$ is a pseudo-uniformizer, there exists $n\geq 1$ and $f_1\in F^+\backslash \pi F^+$ such that 
    \[f=\pi^nf_1.\]
    By our assumption, there exists $a\in E^+,\ u\in I_F$ and $g\in F^+$ such that
    \[f_1=au+\pi g.\]
    Since $f_1\not\in \pi F^+$, $a\notin \pi E^+$. Hence, as $E^+$ is a valuation ring, $a^{-1}\pi$ lies in the maximal ideal of $E^+$. Note that
    \[f_1=au+\pi g=a(u+a^{-1}\pi g)\]
    and $u+a^{-1}\pi g$ is invertible. We have $f=\pi^Nf_1$ is a product of an element of $I_F$ and an element of $E^+$. This finishes the proof.
\end{proof}

\begin{theo}\label{theo: main deform}
    Let $\mathcal{F}/k$ be an extension of complete non-Archimedean fields. The following statements hold:
\begin{enumerate}[label=(\roman*)]
    \item There exists a $(p,\xi)$-complete and $(p,\xi)$-completely flat lifting of $\mathcal{F}^+$ over $A_{\inf}$.
    \item Let $\widetilde{\mathcal{F}}^+$ be such a lifting. Then $\mathcal{F}^{+\flat} := \widetilde{\mathcal{F}}^+/p$ is a valuation ring of rank $1$ satisfying $\mathcal{F}^{+\flat}/p^{\flat} \cong \mathcal{F}^+/p$.
    \item If the value group of $\mathcal{F}$ equals the value group of $k$, then the same holds for $\mathcal{F}^{\flat}$.
\end{enumerate}
\end{theo}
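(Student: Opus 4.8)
The plan is to settle (i) by deformation theory, derive (iii) formally from (ii) via Lemma \ref{lemma: value group no extension}, and put essentially all the work into (ii). For (i): as $\frako$ is perfectoid, Theorem \ref{theo: cpt diff mod of val rings} gives $\widehat{\mathbb{L}}_{\mathcal{F}^+/\frako}\simeq\widehat{\Omega}_{\mathcal{F}^+/\frako}[0]$ with $\widehat{\Omega}_{\mathcal{F}^+/\frako}$ flat over $\mathcal{F}^+$. I would lift $\mathcal{F}^+$ along the prism $(\Ainf,(\xi))$ by climbing the tower of square-zero extensions $\Ainf/(p^a,\xi^{b+1})\to\Ainf/(p^a,\xi^b)$ and passing to the $(p,\xi)$-adic limit; the obstruction at each stage lies in a $p$-completed $\Ext^2$ of $\widehat{\Omega}_{\mathcal{F}^+/\frako}$ against a flat module, and it vanishes because $\mathcal{F}^+$ is a rank-one valuation ring, over which flat modules are homologically well behaved. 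This produces a $(p,\xi)$-complete, $(p,\xi)$-completely flat $\Ainf$-algebra $\widetilde{\mathcal{F}}^+$ with $\widetilde{\mathcal{F}}^+/\xi\cong\mathcal{F}^+$. (The lift need not be unique, which is why (ii) is phrased for an arbitrary one.)

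For (ii): fix such a $\widetilde{\mathcal{F}}^+$ and put $\mathcal{F}^{+\flat}=\widetilde{\mathcal{F}}^+/p$. Since $(p,\xi)=(p,[p^\flat])$ as ideals of $\widetilde{\mathcal{F}}^+$, one gets $\mathcal{F}^{+\flat}/p^\flat=\widetilde{\mathcal{F}}^+/(p,\xi)=\mathcal{F}^+/p$ for free, which is the last assertion. Flatness of $\widetilde{\mathcal{F}}^+$ over $\Ainf$ makes $\mathcal{F}^{+\flat}$ a $p^\flat$-completely flat — hence $p^\flat$-torsion-free and classically $p^\flat$-adically complete — $\frako^\flat$-algebra, necessarily local (its maximal ideal is the preimage of the unique prime of $\mathcal{F}^+/p$), and base change turns the degree-$0$ flatness of $\widehat{\mathbb{L}}_{\mathcal{F}^+/\frako}$ into that of $\widehat{\mathbb{L}}_{\mathcal{F}^{+\flat}/\frako^\flat}$. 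The rank is then pinned down: $\mathcal{F}^{+\flat}$ is not a field ($p^\flat$ being a topologically nilpotent nonzerodivisor), and by Lemma \ref{lemma: val rings radical} every nonzero prime of $\mathcal{F}^{+\flat}$ contains $p^\flat$, so a chain $0\subsetneq\mathfrak{p}\subsetneq\mathfrak{q}$ of nonzero primes would descend to a two-term chain of primes in $\mathcal{F}^{+\flat}/p^\flat=\mathcal{F}^+/p$, contradicting that the latter has a single prime. Hence, once $\mathcal{F}^{+\flat}$ is known to be a valuation ring, it has rank exactly one.

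The core of (ii) is therefore to prove that $\mathcal{F}^{+\flat}$ is a valuation ring — a domain with totally ordered ideals — and I would do this by embedding it into a perfect valuation ring. Let $C=\widehat{\overline{\mathcal{F}}}$ be a completed algebraic closure of $\mathcal{F}$: it is perfectoid, $C^{+\flat}=(C^+)^\flat$ is a perfect valuation ring, and $C^{+\flat}/p^\flat=C^+/p$. The quotient $C^+/\mathcal{F}^+$ is $\frako$-flat — if $0\ne a\in\frako$ and $ax\in\mathcal{F}^+$ for $x\in C^+$, then $x=(ax)/a\in\mathcal{F}\cap C^+=\mathcal{F}^+$ — so the exact sequence $0\to\mathcal{F}^+\to C^+\to C^+/\mathcal{F}^+\to 0$ of flat $\frako$-modules lifts to an exact sequence of $(p,\xi)$-completely flat $\Ainf$-modules with middle term $\Ainf(C^+)=W(C^{+\flat})$; reducing mod $p$ exhibits $\mathcal{F}^{+\flat}$ as a subring of $W(C^{+\flat})/p=C^{+\flat}$, compatibly with $\mathcal{F}^+/p\hookrightarrow C^+/p$. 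Being a subring of the valuation domain $C^{+\flat}$, $\mathcal{F}^{+\flat}$ is a domain; writing $L$ for its fraction field, $C^{+\flat}\cap L$ is a valuation ring of $L$ containing $\mathcal{F}^{+\flat}$, and one checks — using that $\mathcal{F}^{+\flat}$ is $p^\flat$-adically complete with $\mathcal{F}^{+\flat}/p^\flat=\mathcal{F}^+/p$ — that this containment is an equality. Part (iii) is then immediate: by Lemma \ref{lemma: value group no extension} (with $\pi=p$), $\mathcal{F}$ and $k$ have the same value group iff every $\bar f\in\mathcal{F}^+/p$ factors as $\bar a\bar u$ with $\bar a\in\frako/p$ and $\bar u$ a unit of $\mathcal{F}^+/p$; since by (ii) $\mathcal{F}^{+\flat}/p^\flat=\mathcal{F}^+/p$ and $\frako^\flat/p^\flat=\frako/p$, the same condition read through Lemma \ref{lemma: value group no extension} with $\pi=p^\flat$ is equivalent to $\mathcal{F}^\flat$ and $k^\flat$ having the same value group.

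The main obstacle is the valuation-ring step in (ii): making the embedding $\mathcal{F}^{+\flat}\hookrightarrow C^{+\flat}$ work for an arbitrary lift (which requires controlling the $\Ext^1$ governing the lift of the above exact sequence) and then verifying $\mathcal{F}^{+\flat}=C^{+\flat}\cap L$. An alternative route, perhaps cleaner, is purely characteristic-$p$: $\mathcal{F}^{+\flat}$ is a $p^\flat$-adically complete, $p^\flat$-torsion-free, local $\frako^\flat$-algebra whose completed cotangent complex over $\frako^\flat$ — equivalently, since $\frako^\flat$ is perfect, over $\mathbb{F}_p$ — is flat in degree $0$, and whose reduction mod $p^\flat$ has a one-point spectrum; one then deduces from the structure theory surrounding Gabber's theorem (Theorem \ref{theo:flat cotangent cpx of val ring in char p and cartier smooth}) that such a ring must be a valuation ring. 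Either way, this implication is where the real content of the theorem resides; parts (i) and (iii) are, by comparison, routine.
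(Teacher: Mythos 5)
Your part~(iii) matches the paper and is correct. Part~(i) is the right kind of argument, but the obstruction-vanishing step rests on the vague assertion that flat modules over a rank-one valuation ring are ``homologically well behaved,'' which is not true in general: a non-discrete rank-one valuation ring has global dimension $\geq 2$, so $\Ext^2$ of a flat (non-projective) module against another can be nonzero. What is actually needed — and what the paper cites — is \cite[Corollary 3.11]{Bouis_2023}: the cotangent complex $\dL_{(\mathcal{F}^+/p)/(\frako/p)}$ has \emph{projective} dimension $<2$, which is a stronger, module-specific fact that directly kills the $\Ext^2$ obstruction. Flatness of $\widehat{\Omega}_{\mathcal{F}^+/\frako}$ alone does not get you there.

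The real problem is in part~(ii), and you have already flagged it yourself: you want to embed $\mathcal{F}^{+\flat}$ into $C^{+\flat}$ by lifting the extension $0\to\mathcal{F}^+\to C^+\to C^+/\mathcal{F}^+\to 0$ over $\Ainf$ so that the middle term is $W(C^{+\flat})$ and the left term is the \emph{given arbitrary} $\widetilde{\mathcal{F}}^+$. But an arbitrary flat deformation of $\mathcal{F}^+$ is just some torsor under the deformation group; there is no a priori reason it should admit a ring map to $W(C^{+\flat})$ lifting $\mathcal{F}^+\hookrightarrow C^+$, nor is a module-theoretic lift of the exact sequence automatically a ring map. And even granting the embedding, the final assertion $\mathcal{F}^{+\flat}=C^{+\flat}\cap L$ is stated as ``one checks'' without an argument; this equality is precisely what makes $\mathcal{F}^{+\flat}$ a valuation ring and is not routine. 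The paper does not attempt any of this: it simply invokes \cite[Lemma 3.13]{Bouis_2023}, which (as also used in the paper's Theorem \ref{theo: calculate dif module by not complete}) directly says that a $p$-completely flat deformation of a valuation ring $\mathcal{F}^+/p$ is again a valuation ring. Once you have that, the rank-one statement and $\mathcal{F}^{+\flat}/p^\flat\cong\mathcal{F}^+/p$ are immediate, exactly as in your proposal. So the overall architecture of your proof is sound, but the load-bearing step in (ii) is left as a gap that the paper's cited lemma is specifically designed to fill.
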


\begin{proof}
    We establish each part of the theorem in sequence.

For part (i), we appeal to \cite[Corollary 3.11]{Bouis_2023}, which shows that the cotangent complex $\mathbb{L}_{(\mathcal{F}^+/p)/(\mathfrak{o}/p)}$ has projective dimension smaller than $2$. Standard deformation theory then guarantees the existence of a $(p,\xi)$-complete and $(p,\xi)$-completely flat deformation of $\mathcal{F}^+$ over $A_{\inf}$.

For part (ii), the isomorphism $\mathcal{F}^{+\flat}/p^{\flat} \cong \mathcal{F}^+/p$ is obvious. It follows directly from \cite[Lemma 3.13]{Bouis_2023}, that $\mathcal{F}^{=\flat}$ is a valuation ring. Moreover, since $\mathcal{F}^{+\flat}/p^{\flat} \cong \mathcal{F}^+/p$ is of Krull dimension $0$, $\calK^{+\flat}$ is of rank $1$.

Finally, part (iii) is an immediate consequence of Lemma \ref{lemma: value group no extension}, as $\mathcal{F}^{+\flat}/p^{\flat} \cong \mathcal{F}^+/p$.
\end{proof}

We now proceed to establish our main result in the mixed characteristic setting.

\begin{proof}[Proof of Theorem \ref{main theo in intro} in mixed characteristic]

Keep the assumptions for $k$. We need to show that the following are equivalent:
    \begin{enumerate}[label=(\alph*)]
        \item $\calK^+$ has a flat $\delta$-lifting over $\Ainf$;
        \item $\calK$ is not of type 4.
    \end{enumerate}

By Theorem \ref{theo: type 23 has delta lifting}, it suffices to demonstrate that $\mathcal{K}^+$ does not admit a flat $\delta$-lifting over $A_{\inf}$ when $\mathcal{K}$ is of type 4. Suppose, for contradiction, that such a lifting $(\widetilde{\mathcal{K}}^+, \delta)$ exists. After taking $(p,\xi)$-completion, we may assume that $\widetilde{\mathcal{K}}^+$ is $(p,\xi)$-complete.

Denote by $\calK^{+\flat}=\widetilde{\calK}^+/p$. By Theorem \ref{theo: main deform} (ii), $\widehat\Omega_{\calK^{+\flat}/\frako^\flat}\otimes_{\frako}\frako/p^{\flat}\cong\widehat{\Omega}_{\calK^+/\frako}\otimes_{\frako}\frako/p$ is almost finitely generated over $\calK^{+\flat}/p^\flat\cong \calK^+/p$ by Theorem \ref{theo: equi cond of finite dim}. Again, by Theorem \ref{theo: equi cond of finite dim}, $\dim_{\calK^\flat}\big(\widehat{\Omega}_{\calK^{\flat+}/\frako^\flat}[\frac{1}{p^\flat}]\big)$ is finite dimensional.

Applying Theorem \ref{theo: main deform}, we obtain that $\mathcal{K}^{+\flat} := \widetilde{\mathcal{K}}^+/p$ is a $p^{\flat}$-complete valuation ring with the following properties:
\begin{itemize}
    \item Denote by $\mathcal{K}^{\flat}$ the fraction field of $\mathcal{K}^{+\flat}$. Then the value group of $\mathcal{K}^{\flat}$ equals that of $k$.
    \item The residue field of $\mathcal{K}^{\flat}$ coincides with that of $k$, which is perfect.
\end{itemize}

By Theorem \ref{theo: non f split}, it follows that $\mathcal{K}^{\flat}$ must be perfect. This implies that $\mathcal{K}^{+\flat}$ is perfect as well. In particular, the Frobenius endomorphism on $\mathcal{K}^{+\flat}/p^{\flat} \cong \mathcal{K}^+/p$ is surjective, which implies that $\mathcal{K}$ is perfectoid. This contradicts Lemma \ref{lemma: dim of dif type 4}, completing the proof.
\end{proof}

\begin{rmk}
Retaining the notation from the preceding proof, we note that it remains unknown whether the fraction field of $\mathcal{K}^{+\flat}$ is topologically finitely generated over $k^{\flat}$. Consequently, we cannot directly apply the positive characteristic version of our result in this context.
\end{rmk}

\subsection{Proof of Theorem \ref{theo: main 2 f finite}}\label{pf to main 2}

In this subsection, let $k$ be an algebraically closed complete non-Archimedean field whose residue field is of characteristic $p$. Denote by $\frako$ the ring of integers of $k$. Let $\mathcal{K}/k$ be a one-dimensional analytic field.

Recall the following important consequence in commutative algebra.

\begin{theo}\label{theo: local flat finite generated free}
    Let $R$ be a local ring. Then each finitely generated flat $R$-module is free.
\end{theo}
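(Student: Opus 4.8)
The plan is to reproduce the classical fact that a finitely generated flat module over a local ring is free, taking care to make the argument work even when $R$ is not Noetherian. Write $(R,\mathfrak{m},\kappa)$ for the local ring with residue field $\kappa=R/\mathfrak{m}$, and let $M$ be a finitely generated flat $R$-module. First I would pick $x_1,\dots,x_n\in M$ whose images form a $\kappa$-basis of $M/\mathfrak{m}M$; by Nakayama's lemma they generate $M$, giving a surjection $\pi\colon R^n\twoheadrightarrow M$, and the whole point is to show that $K:=\ker\pi$ vanishes.

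The second step computes $K/\mathfrak{m}K$. Since $M$ is flat, $\Tor_1^R(\kappa,M)=0$, so tensoring $0\to K\to R^n\to M\to 0$ with $\kappa$ yields an exact sequence $0\to K/\mathfrak{m}K\to \kappa^n\xrightarrow{\bar\pi}M/\mathfrak{m}M\to 0$; but $\bar\pi$ is an isomorphism by the choice of the $x_i$, hence $K/\mathfrak{m}K=0$. If $R$ were Noetherian one could stop here: $K$ would be finitely generated and Nakayama's lemma would give $K=0$. In general I would instead argue one element at a time.

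Fix $x=(a_1,\dots,a_n)\in K$, i.e.\ $\sum_i a_i x_i=0$ in $M$. By the equational criterion of flatness (\cite[\href{https://stacks.math.columbia.edu/tag/00HK}{Tag 00HK}]{stacks-project}) there are $y_1,\dots,y_m\in M$ and a matrix $C=(c_{ij})\in R^{n\times m}$ with $x_i=\sum_j c_{ij}y_j$ for all $i$ and $\sum_i a_i c_{ij}=0$ for all $j$. Writing each $\bar y_j$ in the basis $\bar x_1,\dots,\bar x_n$ of $M/\mathfrak{m}M$ gives a matrix $D\in R^{m\times n}$ with $\overline{CD}=I_n$ over $\kappa$; therefore $CD\in R^{n\times n}$ reduces to the identity modulo $\mathfrak{m}$, so $\det(CD)\in R\setminus\mathfrak{m}=R^\times$ and $CD$ is invertible. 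The relations $\sum_i a_i c_{ij}=0$ say exactly that the row vector $a=(a_1,\dots,a_n)$ satisfies $aC=0$, hence $a(CD)=(aC)D=0$, and invertibility of $CD$ forces $a=0$. Thus $K=0$, and $\pi$ exhibits $M$ as $R^n$, i.e.\ $M$ is free.

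The main obstacle is the lack of a Noetherian hypothesis: without it there is no reason for $K$ to be finitely generated, so the clean Nakayama finish in the second step is unavailable, and the equational criterion of flatness is precisely what replaces the missing finiteness by turning each relation into a finite piece of linear algebra over $\kappa$ that lifts to $R$. Beyond that, the only thing to watch is the matrix bookkeeping — the two index ranges $n$ and $m$, and tracking which product reduces to the identity over $\kappa$.
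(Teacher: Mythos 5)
Your proof is correct. The paper itself does not give an argument here: it simply cites Matsumura's \emph{Commutative Ring Theory}, Theorem 7.10, for this classical fact. Your write-up supplies a complete, self-contained proof valid without any Noetherian hypothesis, which is precisely the generality needed (the valuation ring $\calK^+$ in the paper is typically non-Noetherian). The mechanism is clean and standard: after presenting $M$ as $R^n/K$ with $R^n\to M$ inducing an isomorphism modulo $\mathfrak{m}$, you do not try to kill $K$ all at once by Nakayama (which would require $K$ to be finitely generated) but instead handle each relation $a=(a_1,\dots,a_n)\in K$ individually via the equational criterion of flatness; the resulting matrix $C$ with $aC=0$ admits a $D$ with $CD\equiv I_n\pmod{\mathfrak{m}}$, hence $CD$ invertible, forcing $a=0$. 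The only cosmetic point worth noting is that your second paragraph (computing $K/\mathfrak{m}K=0$) is not actually used in the final element-wise argument --- it serves as motivation for why the Noetherian shortcut fails, which is fine, but you could omit it without loss. Matsumura's own proof is a mild variant: an induction on the number $n$ of generators, again powered by the equational criterion, isolating one unit entry of $C$ at a time; your version avoids the induction by invoking invertibility of $CD$ in one step, which is arguably tidier.
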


\begin{proof}
    See \cite[Theorem 7.10]{matsumura1989commutative}.
\end{proof}

\begin{lemma}\label{lemma: f finite to dif free}
    The complete differential module $\widehat\Omega_{\calK^+/\frako}$ is free of finite rank if $\calK^+/p$ is $F$-finite.
\end{lemma}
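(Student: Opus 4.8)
I would reduce the statement, in three short steps, to an elementary fact about K\"ahler differentials of $F$-finite rings. In \textbf{Step 1}, I would observe that since $\frako$ is perfectoid ($k$ being algebraically closed and complete non-Archimedean with residue characteristic $p$), Theorem \ref{theo: cpt diff mod of val rings} (and the corollary following it) shows that $\widehat\Omega_{\calK^+/\frako}$ is a flat, classically $\varpi$-adically complete $\calK^+$-module, where $\varpi$ is any pseudo-uniformizer; since $\calK^+$ is local, Theorem \ref{theo: local flat finite generated free} then reduces the claim to showing that $\widehat\Omega_{\calK^+/\frako}$ is \emph{finitely generated}. In \textbf{Step 2}, I would reduce further to showing that $\widehat\Omega_{\calK^+/\frako}/\varpi$ is a finitely generated $\calK^+/\varpi$-module: after lifting a finite generating set of the quotient to elements $x_1,\dots,x_n$, the submodule $N=\sum_i\calK^+x_i$ satisfies $N+\varpi^m\widehat\Omega_{\calK^+/\frako}=\widehat\Omega_{\calK^+/\frako}$ for all $m$; being finitely generated and $\varpi$-torsion free, $N$ is flat by Lemma \ref{lemma: val rings flat}, hence free of finite rank by Theorem \ref{theo: local flat finite generated free}, hence $\varpi$-adically complete and so closed in the separated module $\widehat\Omega_{\calK^+/\frako}$; a successive-approximation argument then forces $N=\widehat\Omega_{\calK^+/\frako}$. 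In \textbf{Step 3}, I would invoke \cite[Theorem 3.1]{Bouis_2023} together with Theorem \ref{theo: cpt diff mod of val rings} (compare Theorem \ref{theo: calculate dif module by not complete}): the complex $\dL_{\calK^+/\frako}\otimes^{\mathbf{L}}_{\calK^+}\calK^+/\varpi$ is a flat $\calK^+/\varpi$-module concentrated in degree $0$, and a diagram chase with the transitivity triangles for $\frako\to\frako/\varpi\to\calK^+/\varpi$ and $\frako\to\calK^+\to\calK^+/\varpi$ (using that $\varpi\in\frako$, so $\dif\varpi=0$ in $\Omega_{\calK^+/\frako}$, which makes the two shifted conormal terms match up) identifies this module with $\widehat\Omega_{\calK^+/\frako}/\varpi$ on the one hand and with $\Omega_{(\calK^+/\varpi)/(\frako/\varpi)}$ on the other.

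\textbf{The key input.} After these reductions it remains to prove that $\Omega_{(\calK^+/\varpi)/(\frako/\varpi)}$ is finitely generated over $\calK^+/\varpi$ whenever $\calK^+/p$ is $F$-finite. The mechanism is the elementary observation that any $\dF_p$-algebra $R$ that is module-finite over its subring $R^p$ of $p$-th powers --- i.e.\ $F$-finite --- has finitely generated $\Omega_{R/\dF_p}$: the surjection $\Omega_{R/\dF_p}\to\Omega_{R/R^p}$ has zero kernel, because that kernel is generated by $\{\dif a:a\in R^p\}$ and $\dif(b^p)=p\,b^{p-1}\dif b=0$, while $\Omega_{R/R^p}$ is generated by $\dif m_1,\dots,\dif m_r$ for any $R^p$-module generators $m_i$ of $R$. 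I would apply this with $R=\calK^+/\varpi$, and then the right-exact transitivity sequence for $\dF_p\to\frako/\varpi\to\calK^+/\varpi$ exhibits $\Omega_{(\calK^+/\varpi)/(\frako/\varpi)}$ as a quotient of the finitely generated module $\Omega_{(\calK^+/\varpi)/\dF_p}$, hence finitely generated.

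\textbf{Checking the hypothesis is available.} I still need $\calK^+/\varpi$ itself to be $F$-finite. In mixed characteristic I would take $\varpi=p$, which is a pseudo-uniformizer of the rank-one ring $\calK^+$; then $\calK^+/\varpi=\calK^+/p$ and $F$-finiteness is exactly the hypothesis. If $\mathrm{char}(k)=p$, then $p=0$, so $\calK^+/p=\calK^+$ is $F$-finite, while $\calK^+/\varpi$ is a quotient of $\calK^+$ and quotients of $F$-finite rings are $F$-finite (if $R=\sum R^pm_i$ then $R/I=\sum(R/I)^p\bar m_i$, as $(R/I)^p$ is the image of $R^p$); in fact in this characteristic one can avoid Steps 2--3 altogether, since $\frako$ perfect forces $\Omega_{\frako/\dF_p}=0$, hence $\Omega_{\calK^+/\frako}=\Omega_{\calK^+/\dF_p}$, which is finitely generated by the above and flat by Theorem \ref{theo:flat cotangent cpx of val ring in char p and cartier smooth}, hence free of finite rank and already equal to $\widehat\Omega_{\calK^+/\frako}$.

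\textbf{Where the difficulty lies.} The conceptual content is just the one-line differential computation for $F$-finite rings, and there is no serious obstacle there. The care is entirely in Step 3 --- correctly matching the complete differential module $\widehat\Omega_{\calK^+/\frako}$ with the K\"ahler differentials of the special fibre $\Omega_{(\calK^+/\varpi)/(\frako/\varpi)}$, which leans on the perfectoidness of $\frako$ and on the results of \cite{Bouis_2023} recalled in Section 2 --- and, to a lesser extent, in the completeness bookkeeping of Step 2 that upgrades ``finitely generated modulo $\varpi$'' to ``finitely generated'' for the $\varpi$-adically complete flat module $\widehat\Omega_{\calK^+/\frako}$.
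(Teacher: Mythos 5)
Your proposal is correct, and it reaches the same conclusion by a more systematic route than the paper. The paper argues hands-on: pick $e_1,\dots,e_r$ generating $F_*(\calK^+/p)$ (or $F_*\calK^+$ in equal characteristic), observe $\dif(f)=\sum_j g_j^p\dif(e_j)$ so the $\dif(e_j)$ generate a dense finitely generated submodule of $\widehat\Omega_{\calK^+/\frako}$ (resp.\ of $\widehat\Omega_{\calK^+/\frako}/p$ in mixed characteristic), and then promote density to equality by the same completeness/approximation bootstrap you spell out in Step 2, followed by Theorem \ref{theo: local flat finite generated free}. You instead isolate the underlying elementary mechanism --- $\Omega_{R/\dF_p}\cong\Omega_{R/R^p}$ is finitely generated for $F$-finite $R$ --- and feed it the special fibre via a cotangent-complex base-change identification $\widehat\Omega_{\calK^+/\frako}/\varpi\cong\Omega_{(\calK^+/\varpi)/(\frako/\varpi)}$. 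This Step 3 is exactly the point the paper passes over with ``the same argument shows'' in the mixed-characteristic case, so your version actually fills a small gap (and your transitivity-triangle chase could be replaced by the even shorter observation that $\frako\to\frako/\varpi$ is Tor-independent from $\frako\to\calK^+$, giving $\dL_{\calK^+/\frako}\otimes^{\mathbf L}_{\calK^+}\calK^+/\varpi\simeq\dL_{(\calK^+/\varpi)/(\frako/\varpi)}$ directly). Your equal-characteristic shortcut using $\Omega_{\frako/\dF_p}=0$ (so $\Omega_{\calK^+/\frako}=\Omega_{\calK^+/\dF_p}$, already finitely generated and flat, hence free and complete) is also valid and is a genuine simplification there. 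In short: same skeleton --- finite generation mod $\varpi$ from $F$-finiteness, then approximation plus Theorem \ref{theo: local flat finite generated free} --- but you route the key input through the K\"ahler differentials of the special fibre rather than through a dense submodule of the completed module, and you make the identification of these two things explicit.
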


\begin{proof}
    Assume first that the characteristic of $k$ is $p$. Let $\{e_1,e_2,\dots,e_r\}$ be a set of generators of $F_*\calK^+$ as a $\calK^+$-module. Thin means that every $f\in\calK^+$ can be written as
    \[f=\sum_{j=1}^r g_j^pe_j.\]
    Hence, in $\widehat{\Omega}_{\calK^+/\frako}$, we have
    \[\dif(f)=\sum_{j=1}^rg_j^p\dif(e_j).\]
    Let $F\subseteq \widehat{\Omega}_{\calK^+/\frako}$ be the submodule generated by $\dif(e_1),\dif(e_2),\dots,\dif(e_r)$. Then $F$ is dense in $\widehat{\Omega}_{\calK^+/\frako}$. By an approximation argument, $F=\widehat{\Omega}_{\calK^+/\frako}$. By Theorem \ref{theo: local flat finite generated free}, $\widehat{\Omega}_{\calK^+/\frako}$ is free.

    Now assume that the characteristic of $k$ is $0$. The same argument shows that $\widehat{\Omega}_{\calK^+/\frako}/p$ is finitely generated. Choose a set of elements $m_1,m_2,\dots,m_r$ whose images generate $\widehat{\Omega}_{\calK^+/\frako}/p$. By an approximation argument, we can prove that $m_1,m_2,\dots,m_r$ generate $\widehat{\Omega}_{\calK^+/\frako}$. By Theorem \ref{theo: local flat finite generated free}, $\widehat{\Omega}_{\calK^+/\frako}$ is free.
\end{proof}

\begin{lemma}\label{lemma: dif mod of type 34 not free}
    If $\calK^+$ is not of type 2, then
    \[\frakm\widehat{\Omega}_{\calK^+/\frako}=\widehat{\Omega}_{\calK^+/\frako}.\]
    In particular, $\widehat{\Omega}_{\calK^+/\frako}$ is not a free $\calK^+$-module.
\end{lemma}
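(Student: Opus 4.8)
The plan is to prove directly that $\dif f\in\frakm\,\Omega_{\calK^+/\frako}$ for every $f\in\calK^+$, and then pass to the $\varpi$-adic completion. First I would record two standing facts. Since $k$ is algebraically closed and (as it has a pseudo-uniformizer) non-trivially valued, its value group $|k^\times|$ is a nontrivial divisible subgroup of $\dR_{>0}$, hence dense. Since $\frako$ is perfectoid, Theorem~\ref{theo: calculate dif module by not complete}, applied to the (already complete) extension $\calK^+/\frako$, identifies $\widehat{\Omega}_{\calK^+/\frako}$ with the $\varpi$-adic completion of $\Omega:=\Omega_{\calK^+/\frako}$; in particular $\widehat{\Omega}_{\calK^+/\frako}/\varpi\cong\Omega/\varpi\Omega$. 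Finally, ``$\calK$ not of type $2$'' means exactly that the residue field of $\calK^+$ coincides with that of $k$ (Definition~\ref{defi: Berkovich classify 1 dim geo rings}).

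The key elementary claim is: $\dif f\in\frakm\,\Omega$ for all $f\in\calK^+$. If $f$ is a non-unit (so $0\le|f|<1$, the case $f=0$ being trivial), density of $|k^\times|$ lets me choose $c\in\frako$ with $|f|\le|c|<1$; then $c\in\frakm$, $g:=f/c\in\calK^+$, and since $\dif c=0$ (differentials are taken relative to $\frako$) we get $\dif f=c\,\dif g\in\frakm\,\Omega$. If $f$ is a unit, $|f|=1$, then because the residue fields agree there is $a\in\frako^\times$ with $f-a$ in the maximal ideal of $\calK^+$; since $\dif a=0$ we have $\dif f=\dif(f-a)$, and $f-a$ is a non-unit, so the previous case applies. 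As $\Omega$ is generated as a $\calK^+$-module by the symbols $\dif f$, the claim gives $\Omega=\frakm\,\Omega$.

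To conclude, I pass to the completion: since $\varpi\in\frakm$, the equality $\Omega=\frakm\,\Omega$ gives $\Omega/\varpi\Omega=\frakm\,(\Omega/\varpi\Omega)$, and via $\widehat{\Omega}_{\calK^+/\frako}/\varpi\cong\Omega/\varpi\Omega$ this reads $\widehat{\Omega}_{\calK^+/\frako}=\frakm\,\widehat{\Omega}_{\calK^+/\frako}+\varpi\,\widehat{\Omega}_{\calK^+/\frako}=\frakm\,\widehat{\Omega}_{\calK^+/\frako}$, which is the assertion. For the last sentence: a free $\calK^+$-module $M$ with $\frakm M=M$ must be zero, since otherwise a coordinate projection yields $\calK^+=\frakm\calK^+$, which is impossible because $\frakm\calK^+$ lies in the maximal ideal of $\calK^+$; and $\widehat{\Omega}_{\calK^+/\frako}\neq0$ (it is one-dimensional after inverting $\varpi$: for type~$4$ this is Lemma~\ref{lemma: dim of dif type 4}, and the type~$3$ case is analogous and easier — alternatively $\dif z\neq 0$ for a uniformization $z$). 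Hence $\widehat{\Omega}_{\calK^+/\frako}$ is not free.

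The computations here are all short; the one point that needs care is the reduction in the unit case, which is precisely where the hypothesis ``not of type~$2$'' (equality of residue fields) enters, together with the density of $|k^\times|$ — equivalently, the absence of a minimal nonzero ideal in $\frako$ — which is what makes the division $f=cg$ legitimate. The completion step is routine once $\widehat{\Omega}_{\calK^+/\frako}$ is recognized as the $\varpi$-adic completion of $\Omega_{\calK^+/\frako}$, guaranteed by the perfectoidness of $\frako$.
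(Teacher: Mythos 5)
Your proof is correct and takes essentially the same route as the paper's: both boil the claim down to showing $\dif f\in\frakm\,\widehat{\Omega}_{\calK^+/\frako}$ for each $f\in\calK^+$, by using the residue-field equality (which is exactly what ``not type $2$'' supplies) to subtract an element of $\frako$ so that $f$ lands in the maximal ideal of $\calK^+$, and then the density of $|k^\times|$ (from $k$ being algebraically closed) to write that element as something in $\frakm$ times an integral element. Your write-up merely makes explicit two points the paper leaves implicit — the density argument behind the identification of the maximal ideal with $\frakm\calK^+$, and the non-vanishing of $\widehat{\Omega}_{\calK^+/\frako}$ needed for the ``in particular'' non-freeness conclusion — but the core idea is identical.
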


\begin{proof}
    Note that in both cases, the residue field of $\calK$ is equal to the residue field of $k$.

    For any $\omega\in\widehat{\Omega}_{\calK^+/\frako}$, there exists $f_1,g_1,f_2,g_2,\dots,f_r,g_r\in \calK^+$ such that 
    \[\sum_{j=1}^r g_j\dif(f_j)-\omega\in \frakm\widehat{\Omega}_{\calK^+/\frako}.\]
    For any $j$, by the equality between the residue fields of $k$ and $\calK$, we can choose $a_j\in \frako$ such that $f_j-a_j\in \frakm\calK^+$. Hence,
    \[\sum_{j=1}^r g_j\dif(f_j)=\sum_{j=1}^r g_j\dif(f_j-a_j)\in \frakm\widehat{\Omega}_{\calK^+/\frako}.\]
    Thus, the lemma is proved.
\end{proof}

\begin{lemma}\label{lemma: faith flat + etale carry f finite}
    Let $\rho:A\to B$ be a faithfully flat, \'etale extension of algebras over $\dF_p$. Then $A$ is $F$-finite if and only if $B$ is $F$-finite.
\end{lemma}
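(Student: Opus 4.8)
The plan is to identify $F_*B$ with a base change of $F_*A$ along $\rho$, and then transfer finite generation in one direction using flatness and in the other using faithful flatness.

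\medskip
\noindent\emph{Step 1 (the key input).} An étale homomorphism of $\dF_p$-algebras is \emph{relatively perfect}: for $\rho\colon A\to B$ étale, the relative Frobenius
\[
F_{B/A}\colon B\otimes_{A,F_A}A\longrightarrow B,\qquad b\otimes a\longmapsto \rho(a)\,b^p,
\]
is an isomorphism of $B$-algebras. One sees this as follows: $B\otimes_{A,F_A}A$ is the base change of the étale map $\rho$ along $F_A\colon A\to A$, hence étale over $A$ via the right tensor factor, so $F_{B/A}$ is a morphism of étale $A$-algebras and is therefore étale; on the other hand $F_{B/A}$ is, like any relative Frobenius of $\dF_p$-schemes, a universal homeomorphism; an étale universal homeomorphism is an isomorphism (cf.\ \cite{stacks-project}). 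Unwinding, this says exactly that the canonical $B$-linear map
\[
\theta\colon F_*A\otimes_A B\longrightarrow F_*B,\qquad m\otimes b\longmapsto \rho(m)\,b^p
\]
(where $B$ acts on $F_*A\otimes_A B$ through the second factor) is an isomorphism of $B$-modules. In particular $A$ is $F$-finite iff $F_*A$ is a finitely generated $A$-module, and $B$ is $F$-finite iff the $B$-module $F_*A\otimes_A B$ is finitely generated.

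\medskip
\noindent\emph{Step 2 ($A$ $F$-finite $\Rightarrow$ $B$ $F$-finite).} If $F_*A$ is finitely generated over $A$, then $F_*A\otimes_A B$ is finitely generated over $B$, so by Step~1 the module $F_*B$ is finitely generated over $B$, i.e.\ $B$ is $F$-finite. This implication uses only that $\rho$ is étale.

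\medskip
\noindent\emph{Step 3 ($B$ $F$-finite $\Rightarrow$ $A$ $F$-finite).} If $B$ is $F$-finite, then by Step~1 the $B$-module $M\otimes_A B$, with $M=F_*A$, is finitely generated. Now apply faithfully flat descent of finite generation: if $\rho\colon A\to B$ is faithfully flat and $M\otimes_A B$ is a finitely generated $B$-module, then $M$ is a finitely generated $A$-module. Indeed, choose $m_1,\dots,m_N\in M$ whose images generate $M\otimes_A B$ over $B$ and set $N'=\sum_i Am_i\subseteq M$; then $N'\otimes_A B\to M\otimes_A B$ is surjective, so $(M/N')\otimes_A B=0$, and hence $M=N'$ by faithful flatness. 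Taking $M=F_*A$ shows $A$ is $F$-finite. Faithful flatness is genuinely needed here: the projection $A_1\times A_2\to A_1$ is étale and flat, and one may have $A_1$ $F$-finite with $A_2$ not $F$-finite while $A_1\times A_2$ fails to be $F$-finite.

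\medskip
The only non-formal ingredient is Step~1, i.e.\ that étale $\dF_p$-algebra maps are relatively perfect (equivalently, that $\theta$ is an isomorphism); this is standard, reducing to ``étale $+$ universal homeomorphism $=$ isomorphism'' as indicated, and I would cite it rather than reprove it. Everything else is bookkeeping with base change and faithfully flat descent.
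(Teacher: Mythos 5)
Your proof is correct and takes essentially the same approach as the paper: use that an \'etale map of $\dF_p$-algebras has bijective relative Frobenius to identify $F_B$ with the base change of $F_A$ along $\rho$, then transfer finite generation via base change (for the forward implication) and faithfully flat descent (for the converse). You spell out the two directions and their respective hypotheses more explicitly than the paper's terse version, and your remark that $A_1\times A_2\to A_1$ witnesses the necessity of faithful flatness is a nice sanity check, but the key input and overall structure coincide with the paper's proof.
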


\begin{proof}
    Since $\rho$ is \'etale. the relative Frobenius map is an isomorphism. This implies that the Frobenius map $F_B:B\to B$ can be identified as the base change 
    \[B\xrightarrow[]{\id_B\otimes F_A}B\otimes_{A,F_A}A.\]
    As $\rho$ is faithfully flat, $F_A$ is finite if and only if the base change of $F_A$ along $\rho$ is finite.
\end{proof}

\begin{proof}[Proof of Theorem \ref{theo: main 2 f finite}]
    Fix a pseudo-uniformizer $\varpi\in \frako\backslash p\frako$. By Lemma \ref{lemma: faith flat + etale carry f finite} and Theorem \ref{theo:uniformization}, we can assume that $\calK$ is a completion of $\calE_0=k(t)$ with respect to some norm on $\calE_0$.

    First, we prove that if $\calK$ is of type $2$, then $\calK^+/p$ is $F$-finite. This time, $\calE_0^+/p$ is isomorphic a localization of a polynomial algebra of $\frako/p$, and $\calK^+/p$ is the $\varpi$-adic completion of $\calE_0^+$. Hence, $\calE_0^+/p$ is $F$-finite, so is its completion $\calK^+/p$.

    Next, we prove that when $\calK$ is of type 3 or type 4, $\calK^+/p$ is not $F$-finite. Indeed, by Lemma \ref{lemma: dif mod of type 34 not free}, $\widehat{\Omega}_{\calK^+/\frako}$ is not free. Then, by Lemma \ref{lemma: f finite to dif free}, $\calK^+/p$ is not $F$-finite.
\end{proof}

\newpage
\appendix

\section{Appendix: Description of the ring of integers of Gauss norms}\label{appd:A}

The following lemma may be well known for experts but we could not find a proof. So we write it down here.

\begin{lemma}\label{lemma: Gabber's calculation of val rings}
    Assume more that $k$ is algebraic colsed. Let $r\notin \Gamma$ and $\calO_r$ be the valuation ring of the Gauss norm on $k(t)$ with center $0$ and radius $r$. Suppose we are given two sequences $\{z_i:i=1,2,\dots\}$ and $\{w_i:i=1,2,\dots\}$ in $k$ satisfying the following:
    \begin{enumerate}
        \item For any $i$, $|w_i|<|w_{i+1}|<r<|z_{i+1}|<|z_i|$;
        \item $\lim_{i\to \infty}|z_i|=r=\lim_{j\to\infty}|w_j|$.
    \end{enumerate}
    For each $i$, define $R_i$ as the localization of the subring $\mathfrak{o}[\frac{w_i}{t},\frac{t}{z_i}]\subseteq k[t^{\pm1}]$ at the prime generated by $\frac{w_i}{t}$, $\frac{t}{z_i}$ and $\mathfrak{m}$. Then, $\calO_r$ is the union of all $R_i$.
\end{lemma}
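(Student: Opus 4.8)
The plan is to prove the two inclusions $\bigcup_i R_i \subseteq \calO_r$ and $\calO_r \subseteq \bigcup_i R_i$ separately. Write $A_i = \frako[\tfrac{w_i}{t},\tfrac{t}{z_i}] \subseteq k[t^{\pm 1}]$, so that $R_i = (A_i)_{\mathfrak{p}_i}$, and let $\|\bigcdot\|$ denote the Gauss norm of radius $r$ centred at $0$. Using the relation $\tfrac{w_i}{t}\cdot\tfrac{t}{z_i} = \tfrac{w_i}{z_i}\in\frako$, a direct computation identifies $A_i$ with the set of Laurent polynomials $\sum_n c_n t^n \in k[t^{\pm1}]$ such that $c_0 \in \frako$, $|c_n|\le |z_i|^{-n}$ for $n>0$, and $|c_{-n}|\le |w_i|^n$ for $n>0$. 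From this description together with $|w_i|<r<|z_i|$ one sees that every element of $A_i$ has Gauss norm $\le 1$, with equality exactly when $c_0\in\frako^\times$, i.e. exactly when the element does not lie in $\mathfrak{p}_i$. Since inverting a norm-$1$ element preserves norm $\le 1$, this gives $R_i \subseteq \{\|\bigcdot\|\le 1\} = \calO_r$ for every $i$; this is the easy inclusion.

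For the reverse inclusion, take $0\ne h\in\calO_r$ and, using that $k$ is algebraically closed, factor $h = c\prod_{\alpha}(t-\alpha)^{n_\alpha}$ with $c\in k^\times$ and finitely many $n_\alpha\ne 0$; note $|\alpha|\ne r$ for every $\alpha$, since $r\notin|k^\times|$. Because $|w_i|$ increases to $r$ and $|z_i|$ decreases to $r$ while only finitely many $\alpha$ are relevant, fix $i_0$ so that for all $i\ge i_0$: every $\alpha$ with $n_\alpha\ne 0$ and $|\alpha|<r$ satisfies $|\alpha|\le|w_i|$, and every such $\alpha$ with $|\alpha|>r$ satisfies $|\alpha|\ge|z_i|$. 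Then for $i\ge i_0$, writing $t-\alpha = t(1-\tfrac{\alpha}{t})$ when $|\alpha|<r$ and $t-\alpha = -\alpha(1-\tfrac{t}{\alpha})$ when $|\alpha|>r$, we obtain
\[
h = c'\,t^{N}\,U,\qquad N = \sum_{|\alpha|<r} n_\alpha,\quad c' = c\prod_{|\alpha|>r}(-\alpha)^{n_\alpha}\in k^\times,
\]
where $U = \prod_{|\alpha|<r}(1-\tfrac{\alpha}{t})^{n_\alpha}\prod_{|\alpha|>r}(1-\tfrac{t}{\alpha})^{n_\alpha}$. Each factor $1-\tfrac{\alpha}{t} = 1-\tfrac{\alpha}{w_i}\cdot\tfrac{w_i}{t}$ (resp. $1-\tfrac{t}{\alpha} = 1-\tfrac{z_i}{\alpha}\cdot\tfrac{t}{z_i}$) lies in $A_i$ with constant coefficient $1$, hence is a unit of $R_i$; thus $U$ is a unit of $R_i$, $\|U\|=1$, and $h\in R_i \iff c'\,t^N\in R_i$, while the hypothesis $h\in\calO_r$ reads $|c'|\,r^N = \|h\|\le 1$.

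It remains to put the monomial $c'\,t^N$ into $R_i$ for $i$ large. The key computation is: for $m>0$ and $\beta\in k^\times$, one has $\beta t^m\in R_i \iff |\beta|\le|z_i|^{-m}$, and symmetrically $\beta t^{-m}\in R_i \iff |\beta|\le|w_i|^m$. Indeed "$\Leftarrow$" is immediate from $\beta t^m = (\beta z_i^m)(t/z_i)^m$ with $\beta z_i^m\in\frako$; for "$\Rightarrow$", write $\beta t^m = f/g$ with $f,g\in A_i$ and $g\notin\mathfrak{p}_i$ (so the constant coefficient $g_0$ of $g$ is a unit of $\frako$), and compare the coefficient of $t^m$ in $f = \beta t^m g$, namely $\beta g_0$ of absolute value $|\beta|$, with the bound $|z_i|^{-m}$ valid for every $t^m$-coefficient of an element of $A_i$. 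Granting this: if $N=0$ then $c'\in\frako$ because $|c'|\le 1$; if $N\ne 0$ then $r^{N}\notin|k^\times|$ (as $|k^\times|$ is divisible, $k$ being algebraically closed, and $r\notin|k^\times|$), so $|c'|\,r^N\le 1$ upgrades to the strict inequality $|c'|<r^{-N}$; since $|z_i|^{-N}\uparrow r^{-N}$ for $N>0$ and $|w_i|^{|N|}\uparrow r^{|N|}$ for $N<0$, there is $i_1\ge i_0$ with $c'\,t^N\in R_i$ for all $i\ge i_1$. Any such $i$ gives $h\in R_i$, completing the proof.

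The main obstacle is precisely this last step. Unlike the ring of norm-$\le 1$ functions on the closed annulus $\{|w_i|\le|t|\le|z_i|\}$ (which is a Tate algebra), $A_i$ is only a \emph{polynomial} algebra, so it contains no function of unbounded "degree"; in particular a single monomial $c'\,t^N$ of norm $\le 1$ need not lie in any one fixed $R_i$. What saves the statement is that $r$ is irrational over $|k^\times|$ — which forces $|c'|\,r^N\le 1$ to be strict whenever $N\ne 0$ — together with the hypothesis that $|w_i|$ and $|z_i|$ converge to $r$, so that for large $i$ the radii become favourable enough to absorb the leftover monomial. This is the only point where the hypotheses on $r$ and on the sequences $\{w_i\},\{z_i\}$ are genuinely used.
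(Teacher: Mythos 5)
Your proposal is correct and takes a genuinely different route from the paper. The paper first identifies the union $P_\infty = \bigcup_i \frako[w_i/t, t/z_i]$ with the ring $k[t^{\pm 1}] \cap \calO_r$ of Laurent polynomials of Gauss norm $\le 1$ and observes (using that filtered colimits commute with localization) that $\bigcup_i R_i$ is the localization of $P_\infty$ at the prime of elements of Gauss norm $< 1$; it then shows $\calO_r$ equals this localization by taking an arbitrary $h = f/g$ with $f, g \in P_\infty$ and normalizing the \emph{denominator}: multiply numerator and denominator by a suitable monomial $(xt)^{-j}$ so that the dominant coefficient of $g$ sits in degree $0$, then divide by that coefficient to land $g$ in the multiplicative set. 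You instead factor $h$ completely into linear factors $c\prod_\alpha(t-\alpha)^{n_\alpha}$, show that once $|w_i|,|z_i|$ have moved past all the relevant $|\alpha|$ each factor $t - \alpha$ contributes a unit of $R_i$ times a power of $t$, and then reduce to placing the residual monomial $c't^N$ in $R_i$ for $i$ large, where $r^N \notin |k^\times|$ upgrades $|c'|r^N \le 1$ to a strict inequality. Both arguments hinge on the same two hypotheses — $r\notin |k^\times|$ and $|w_i|,|z_i|\to r$ — but deploy them differently: the paper's approach packages the reduction uniformly through the intermediate ring $P_\infty$, whereas your linear-factor decomposition makes completely transparent where each hypothesis enters and which elements of $\calO_r$ are units in $\bigcup_i R_i$, and your explicit description of $A_i$ by coefficient bounds also supplies the inclusion $R_i \subseteq \calO_r$ in more detail than the paper does.
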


\begin{proof}
    Since $k$ is algebraically closed, $\Gamma$ is a $\dQ$-vector space. Hence, $r^n\notin \Gamma$ for any $0\neq n\in \dZ$. Denote by $\|\bigcdot\|_r$ the Gauss norm with center $0$ and radius $1$.

    Note that $R_i\subseteq R_{i+1}\subseteq\calO_r$ for all $i$. We find $\bigcup_iR_i$ is a subring of $\calO_r$.

    Let $P_\infty$ be the union of all $\mathfrak{o}[\frac{w_i}{t},\frac{t}{z_i}]$. Then, $P_\infty=k[t^{\pm1}]\cap \calO_r$. Indeed, for each $f=\sum_{j=-d}^da_jt^j\in\calO_r$, we have
    \[|a_j|r^j\le 1\]
    for all $j$. Since for any $j\neq 0$, $|a_j|r^j\notin \Gamma$, $|a_j|r^j<1$. Hence, as there are only finitely many non-zero $a_j$, there exists $i$ such that $f\in R_i$.

    Let $M_i$ be the ideal of $R_i$ generated by $\frac{w_i}{t}$, $\frac{t}{z_i}$ and $\mathfrak{m}$. A similar argument in the previous paragraph shows that the union of all $R\backslash M_i$ is equal to
    \[\{f\in k[t^{\pm1}]:\|f\|_r=1\}\]

    Since taking filtered colimit commutes with localization, we only need to check that the localization of $P_\infty$ at the prime\[\{f\in k[t^{\pm1}]:\|f\|_r<1\}\]
    is equal to $\calO_r$. Denote by $P'_\infty$ this localization.

    Obviously, $k(t)$ is equal to the fractional field of $P_\infty$. Hence, any element in $\calO_r$ can be written as $\frac{f}{g}$. Let $g=\sum_{j=-d}^da_jt^j$. By the definition of $\|\bigcdot\|_r$, there exists $j$ such that $|a_j|r^j=\|g\|_r$.

    First assume that $j=0$, then by definition, $a_0^{-1}g\in k[t^{\pm1}]$ has Gauss norm $1$. Since $\frac{f}{g}\in \calO_r$, we have $\|a_0^{-1}f\|_r=|a_0|^{-1}\|f\|_r\leq 1$. Hence,
    \[\frac{f}{g}=\frac{a_0^{-1}f}{a_0^{-1}g}\]
    lies in $P_\infty'$

    It remains to prove for $j\neq 0$. This time, $\|g\|_r<1$ (or $|a_0|=1$ and this causes $j=0$). Hence, we can choose $x\in k^\times$ such that $\|g\|_r<\|(xt)^j\|_r<1$ by the density of $\Gamma$ in $\dR_{>0}$. Then we have
    \[\frac{f}{g}=\frac{(xt)^{-j}f}{(xt)^{-j}g}\]
    and this reduces to the $j=0$ case.
\end{proof}

\bibliographystyle{alpha}
\bibliography{ref}

@book{Berkovich_2012, title={Spectral Theory and Analytic Geometry over Non-Archimedean Fields}, ISBN={9781470412609}, ISSN={2331-7159}, url={http://dx.doi.org/10.1090/surv/033}, DOI={10.1090/surv/033}, journal={Mathematical Surveys and Monographs}, publisher={American Mathematical Society}, author={Berkovich, Vladimir}, year={2012}, month=aug }

@article{Bhatt_2022, title={{Prisms and prismatic cohomology}}, volume={196}, ISSN={0003-486X}, url={http://dx.doi.org/10.4007/annals.2022.196.3.5}, DOI={10.4007/annals.2022.196.3.5}, number={3}, journal={Annals of Mathematics}, publisher={Annals of Mathematics}, author={Bhatt, Bhargav and Scholze, Peter}, year={2022}, month=nov }

@article{bhatt2018integral,
  title={Integral p-adic Hodge theory},
  author={Bhatt, Bhargav and Morrow, Matthew and Scholze, Peter},
  journal={Publications math{\'e}matiques de l'IH{\'E}S},
  volume={128},
  number={1},
  pages={219--397},
  year={2018},
  publisher={Springer}
}

@misc{bhatt2022absolute,
      title={{Absolute prismatic cohomology}},
      author={Bhargav Bhatt and Jacob Lurie},
      year={2022},
      eprint={2201.06120},
      archivePrefix={arXiv},
      primaryClass={math.AG}
}

@misc{bhatt2022prismatization,
      title={{The prismatization of $p$-adic formal schemes}},
      author={Bhargav Bhatt and Jacob Lurie},
      year={2022},
      eprint={2201.06124},
      archivePrefix={arXiv},
      primaryClass={math.AG}
}

@article{Buch_1997, title={The Frobenius morphism on a toric variety}, volume={49}, ISSN={0040-8735}, url={http://dx.doi.org/10.2748/tmj/1178225109}, DOI={10.2748/tmj/1178225109}, number={3}, journal={Tohoku Mathematical Journal}, publisher={Mathematical Institute, Tohoku University}, author={Buch, Anders and Thomsen, Jesper F. and Lauritzen, Niels and Mehta, Vikram}, year={1997}, month=jan }

@article{Bouis_2023, title={Cartier smoothness in prismatic cohomology}, volume={0}, ISSN={1435-5345}, url={http://dx.doi.org/10.1515/crelle-2023-0074}, DOI={10.1515/crelle-2023-0074}, number={0}, journal={Journal für die reine und angewandte Mathematik (Crelles Journal)}, publisher={Walter de Gruyter GmbH}, author={Bouis, Tess Vincent}, year={2023}, month=nov }

@article{Datta_2016, title={Frobenius and valuation rings}, volume={10}, ISSN={1937-0652}, url={http://dx.doi.org/10.2140/ant.2016.10.1057}, DOI={10.2140/ant.2016.10.1057}, number={5}, journal={Algebra $\&$ Number Theory}, publisher={Mathematical Sciences Publishers}, author={Datta, Rankeya and Smith, Karen}, year={2016}, month=jul, pages={1057–1090} }

@article{Datta_2017, title={Correction to the article “Frobenius and
valuation rings”}, volume={11}, ISSN={1937-0652}, url={http://dx.doi.org/10.2140/ant.2017.11.1003}, DOI={10.2140/ant.2017.11.1003}, number={4}, journal={Algebra $\&$ Number Theory}, publisher={Mathematical Sciences Publishers}, author={Datta, Rankeya and Smith, Karen}, year={2017}, month=jun, pages={1003–1007} }

@article{Datta_2021, title={Frobenius splitting of valuation rings and F-singularities of centers}, volume={15}, ISSN={1937-0652}, url={http://dx.doi.org/10.2140/ant.2021.15.2485}, DOI={10.2140/ant.2021.15.2485}, number={10}, journal={Algebra $\&$ Number Theory}, publisher={Mathematical Sciences Publishers}, author={Datta, Rankeya}, year={2021}, month=dec, pages={2485–2512} }

@inbook{Gabber_2003, title={3. Almost ring theory}, ISBN={9783540450962}, ISSN={1617-9692}, url={http://dx.doi.org/10.1007/978-3-540-45096-2_3}, DOI={10.1007/978-3-540-45096-2_3}, booktitle={Almost Ring Theory}, publisher={Springer Berlin Heidelberg}, author={Gabber, Ofer and Ramero, Lorenzo}, year={2003}, pages={50–91} }

@article{Katz70,
title = {Nilpotent connections and the monodromy theorem: Applications of a result of turrittin},
journal = {Publications Mathématiques De l'IHÉS},
volume = {39},
pages = {175-232},
year = {1970},
url = {https://doi.org/10.1007/BF02684688},
doi={10.1007/BF02684688},
author = {Nicholas M. Katz},
}

@article{Kerz_2021, title={Towards Vorst’s conjecture in positive characteristic}, volume={157}, ISSN={1570-5846}, url={http://dx.doi.org/10.1112/s0010437x21007120}, DOI={10.1112/s0010437x21007120}, number={6}, journal={Compositio Mathematica}, publisher={Wiley}, author={Kerz, Moritz and Strunk, Florian and Tamme, Georg}, year={2021}, month=may, pages={1143–1171} }

@book{matsumura1989commutative,
  title={Commutative Ring Theory},
  author={Matsumura, H. and Reid, M.},
  isbn={9780521367646},
  lccn={86011691},
  series={Cambridge Studies in Advanced Mathematics},
  url={https://books.google.com/books?id=yJwNrABugDEC},
  year={1989},
  publisher={Cambridge University Press}
}

@misc{qu2025rationalhodgetateprismaticcrystals,
      title={Rational Hodge--Tate prismatic crystals of quasi-l.c.i algebras and non-abelian $p$-adic Hodge theory}, 
      author={Xiaoyu Qu and Jiahong Yu},
      year={2025},
      eprint={2511.03458},
      archivePrefix={arXiv},
      primaryClass={math.NT},
      url={https://arxiv.org/abs/2511.03458}, 
}

@inproceedings{scholze2012padic,
  title={{P-adic Hodge theory for Rigid analytic varieties}},
  author={Peter Scholze},
  booktitle={Forum of Mathematics, $\Pi$},
  volume={1},
  year={2013},
  organization={Cambridge University Press (CUP)}
}

@misc{stacks-project,
  author       = {The {Stacks project authors}},
  title        = {The Stacks project},
  howpublished = {\url{https://stacks.math.columbia.edu}},
  year         = {2025},
}

@article{Temkin_2010, title={Stable modification of relative curves}, volume={19}, ISSN={1534-7486}, url={http://dx.doi.org/10.1090/s1056-3911-2010-00560-7}, DOI={10.1090/s1056-3911-2010-00560-7}, number={4}, journal={Journal of Algebraic Geometry}, publisher={American Mathematical Society (AMS)}, author={Temkin, Michael}, year={2010}, month=jun, pages={603–677} }

@book{Zariski_1960, title={Commutative Algebra}, ISBN={9783662292440}, ISSN={2197-5612}, url={http://dx.doi.org/10.1007/978-3-662-29244-0}, DOI={10.1007/978-3-662-29244-0}, journal={Graduate Texts in Mathematics}, publisher={Springer Berlin Heidelberg}, author={Zariski, Oscar and Samuel, Pierre}, year={1960} }

\end{document}